\newcommand{\IR}{\mathbb R}
\newcommand{\U}{\mathcal U}
\newcommand{\V}{\mathcal V}
\newcommand{\F}{\mathcal F}
\newcommand{\IQ}{\mathbb Q}
\newcommand{\Ra}{\Rightarrow}
\newcommand{\e}{\varepsilon}
\newcommand{\IN}{\mathbb N}
\newcommand{\IU}{\mathbb U}
\newcommand{\cl}{\mathrm{cl}}
\newcommand{\intr}{\mathrm{int}}
\newcommand{\upa}{\mathrm{\uparrow}}
\newcommand{\w}{\omega}
\newcommand{\dist}{\mathsf{dist}}
\newtheorem{theorem}{Theorem}[section]
\newtheorem{lemma}[theorem]{Lemma}
\newtheorem{corollary}[theorem]{Corollary}
\newtheorem{claim}[theorem]{Claim}
\newtheorem{problem}[theorem]{Problem}
\newtheorem{proposition}[theorem]{Proposition}
\theoremstyle{definition}
\newtheorem{definition}[theorem]{Definition}
\newtheorem{example}[theorem]{Example}
\newtheorem{remark}[theorem]{Remark}
\title[Quasi-metrics, quasi-uniformities and topological monoids]{Quasi-pseudometrics on quasi-uniform spaces and\\ quasi-metrization of topological monoids}
\author{Taras Banakh and Alex Ravsky}
\address{T.Banakh: Ivan Franko National University of Lviv (Ukraine), and Jan Kochanowski University in Kielce (Poland)}
\email{t.o.banakh@gmail.com}
\address{A.Ravsky: Pidstryhach Institute for Applied Problems of Mechanics and Mathematics of National Academy of Sciences, Lviv, Ukraine}
\email{oravsky@mail.ru}
\keywords{Quasi-uniform space, rotund quasi-uniform space, quasi-pseudometric, left-subinvariant premetric, topological monoid, paratopological group}
\subjclass{54E15; 54E35; 22A15}
\thanks{The first author has been partially financed by NCN grant DEC-2012/07/D/ST1/02087.}
\begin{document}
\begin{abstract} We define a notion of a rotund quasi-uniform space and describe a new direct construction of a (right-continuous) quasi-pseudometric on a (rotund) quasi-uniform space. This new construction allows to give alternative proofs of several classical metrizability theorems for (quasi-)uniform spaces and also obtain some new metrizability results. Applying this construction to topological monoids with open shifts, we prove that the topology of any (semiregular) topological monoid with open shifts is generated by a family of (right-continuous) left-subinvariant quasi-pseudometrics, which resolves an open problem posed by Ravsky in 2001. This implies that a topological monoid with open shifts is completely regular if and only if it is semiregular. Since each paratopological group is a topological monoid with open shifts these results apply also to paratopological groups.
\end{abstract}
\maketitle

By the classical theorem of Birkhoff and Kakutani, each first countable Hausdorff topological group is metrizable by a left-invariant metric. In \cite{Rav1} the second author found a counterpart of this result in the class of paratopological groups proving that the topology of each first countable paratopological group $G$ is generated by a left-invariant quasi-pseudometric $d$. However, the quasi-pseudometric $d$ is this result need not be continuous, since the topology on $G$ generated by a (right-)continuous quasi-metric necessarily is Tychonoff. In \cite[Question~3.1]{Rav1} the second author asked if the topology of any Tychonoff first countable paratopological group can be generated by a continuous left-invariant quasi-metric. Recently, this question of Ravsky was answered in negative by Liu \cite{Liu} who proved that if the topology of a paratopological group $G$ is generated by a left-invariant left-continuous quasi-metric, then $G$ is a topological group. In this paper we shall give a partial ``right'' answer to the question of Ravsky's proving that the topology of any regular first-countable paratopological group $G$ is generated by a left-invariant right-continuous quasi-pseudometric, more precisely, by a left-invariant quasi-pseudometric $d$ such that for every non-empty subset $A\subset G$ the distance function $\overline{d}_A:G\to [0,\infty)$, $\overline{d}_A:x\mapsto\inf\{\e>0:x\in \overline{B_d(A,\e)}\}$ is continuous. Quasi-pseudometrics with this property will be called $\overline{\dist}$-continuous. In fact, our argument works in a much more general setting of quasi-uniform spaces, which allows us to prove some new results on  quasi-pseudometrizability of quasi-uniform spaces.

The paper is organized as follows. Section~\ref{s1} is of preliminary character and collects the necessary information on topological spaces, separation axioms, metrics and their generalizations, distance functions, entourages, and quasi-uniformities. In this section we also introduce an important notion of a (point-)rotund quasi-uniform space and prove that all uniform spaces are rotund. The main result of Section~\ref{s2} is Theorem~\ref{main}, which is technically the most difficult result of the paper. In this theorem we give a new construction of a uniform quasi-pseudometric with small balls on a quasi-uniform space. Our construction is direct and differs from the classical construction tracing back to Alexandroff, Urysohn \cite{AU}, Chittenden \cite{Chit}, Frink \cite{Frink} who  used infima over chains for obtaining the triangle inequality. One of advantages of our construction is the possibility to control closures of balls of constructed quasi-pseudometrics, which results in their right-continuity.

In Section~\ref{s3} we apply Theorem~\ref{main} to give alternative proofs of some classical results on metrizability of (quasi-) uniform spaces due to Alexandroff, Urysohn \cite{AU}, Chittenden \cite{Chit}, Frink \cite{Frink}, Aronszajn \cite{Aron}, and Weil \cite{Weil}. We also derive some new result of quasi-pseudometrizability of rotund quasi-uniform spaces by (families of) $\dist$-continuous quasi-pseudometrics.

In Section~\ref{s4} we introduce point-rotund topological spaces (as topological spaces homeomorphic to point-rotund quasi-uniform spaces) and apply Theorem~\ref{main} to prove that for point-rotund spaces the complete regularity is equivalent to the semiregularity and the functional Hausdorffness is equivalent to the semi-Hausdorffness. By a different method these equivalences were first proved by the authors in \cite{BR}. The obtained results show that for point-rotund topological spaces the diagram describing the interplay between separation axioms simplifies to a nice symmetric form.

In Section~\ref{s5} we study four natural quasi-uniformities ($\mathcal L$, $\mathcal R$, $\mathcal L\vee\mathcal R$, $\mathcal L\wedge\mathcal R$) on topological monoids with open shifts and prove that three of them (namely, $\mathcal L$, $\mathcal R$, and $\mathcal L\wedge\mathcal R$) are rotund. In Section~\ref{s6} we apply Theorem~\ref{main} to construct left-subinvariant ($\dist$-continuous) quasi-pseudometrics on topological monoids with open shifts. Since each paratopological group is a topological monoid with open shifts, the results of Section~\ref{s6} apply to topological groups, which is done in Section~\ref{s7}. In particular, we prove that the topology of any first-countable (semiregular) paratopological group is generated by a (right-continuous) quasi-pseudometric, which answers a problem posed by Ravsky in \cite[Question 3.1]{Rav1}.

\section{Preliminaries}\label{s1}

In this section we recall some known information on topological spaces, quasi-uniformities, and various generalizations of metrics. Also we shall introduce rotund quasi-uniform spaces, which play a crucial role in our subsequent considerations.

\subsection{Topological spaces} For a subset $A\subset X$ of a topological space $X$ by $\cl_X(A)$ and $\intr_X(A)$ we denote the closure and the interior of $A$ in $X$, respectively.
The sets $\cl_X(A)$, $\intr_X(A)$, and $\intr_X\cl_X(A)$ will be also alternatively denoted by $\overline{A}$, $A^\circ$, and $\overline{A}^\circ$, respectively.
A subset $U$ of a topological space $X$ is called {\em regular open} if $U=\overline{U}^\circ$.

To avoid a possible ambiguity, let us recall the definitions of separation axioms we shall work with.

\begin{definition}
A topological space $X$ is called
\begin{itemize}
\item[$T_0$:] a {\em $T_0$-space} if  for any distinct points $x,y\in X$ there is an open set $U\subset X$ containing exactly one of these points;
\item[$T_1$:] a {\em $T_1$-space} if for any distinct points $x,y\in X$ the point $x$ has a neighborhood $U_x\subset X$ such that $y\notin U_x$;
\item[$T_2$:] {\em Hausdorff\/}   if for any distinct points $x,y\in X$ the point $x$ has a neighborhood $U_x\subset X$ such that $y\notin\overline{U}_x$;
\item[$T_{\kern-1pt\frac12 2}$:] {\em semi-Hausdorff\/} if for any distinct points $x,y\in X$ the point $x$ has a neighborhood $U_x\subset X$ such that $y\notin\overline{U}^\circ_x$;
\item[$T_{2\frac12}$:] {\em functionally Hausdorff\/}  if for any distinct points $x,y\in X$ there is a continuous function $f:X\to[0,1]$ such that $f(x)\ne f(y)$;
\smallskip

\item[$R$:] {\em regular} if for any point $x\in X$ and neighborhood $O_x\subset X$ of $x$ there is a neighborhood $U_x\subset X$ of $x$ such that $\overline{U}_x\subset O_x$;
\item[$\tfrac12 R$:] {\em semiregular} if for any point $x\in X$ and  neighborhood $O_x\subset X$ of $x$ there is a neighborhood $U_x\subset X$ of $x$ such that $\overline{U}^\circ_x\subset O_x$;
\item[$R\tfrac12$:] {\em completely regular} if for any point $x\in X$ and  neighborhood $O_x\subset X$ of $x$ there is continuous function $f:X\to[0,1]$ such that $f(x)=0$ and $f^{-1}\big([0,1)\big)\subset O_x$;
\item[$T_{3\frac12}$:] {\em Tychonoff\/} if $X$ is a completely regular $T_1$-space;
\item[$T_{3}$:] a {\em $T_3$-space} if $X$ is a regular $T_1$-space;
\item[$T_{\kern-1pt\frac12 3}$:] a {\em $T_{\frac12 3}$-space} if $X$ is a semi-regular $T_1$-space.
\end{itemize}
\end{definition}
For any topological space these separation axioms relate as follows:
$$\xymatrix{
&&T_{\frac12 2}\ar@{=>}[ld]&T_{\frac12 3}\ar@{=>}[l]\ar@{=>}[r]&\tfrac12R\\
T_0&T_1\ar@{=>}[l]&T_2\ar@{=>}[u]&T_3\ar@{=>}[l]\ar@{=>}[r]\ar@{=>}[u]&R\ar@{=>}[u]&\hskip-55pt.\\
&&T_{2\frac12}\ar@{=>}[u]&T_{3\frac12}\ar@{=>}[l]\ar@{=>}[u]\ar@{=>}[r]&R\tfrac12\ar@{=>}[u]\\
}
$$
Known (or simple) examples show that none of the implications in this diagram can be reversed.

\subsection{Various generalizations of metrics}

Let us recall that a {\em metric} on a set $X$ is a function $d:X\times X\to[0,\infty)$ satisfying the axioms:
\begin{enumerate}
\item[(M1)] $d(x,x)=0$ for every $x\in X$;
\item[(M2)] for any points $x,y\in X$ the equality $d(x,y)=0$ implies that $x=y$;
\item[(M3)] $d(x,y)=d(y,x)$ for any points $x,y\in X$;
\item[(M4)] $d(x,z)\le d(x,y)+d(y,z)$ for any points $x,y,z\in X$.
\end{enumerate}
Omitting some of these axioms we obtain various generalizations of metrics. In particular, a function $d:X\times X\to[0,\infty)$ is called
\begin{itemize}
\item a {\em symmetric} if it satisfies the conditions (M1)--(M3);
\item a {\em pseudometric} if it satisfies the conditions (M1), (M3), and (M4);
\item a {\em quasi-metric} if it satisfies the conditions (M1), (M2), and (M4);
\item a {\em quasi-pseudometric} if it satisfies the conditions (M1) and (M4);
\item a {\em premetric} if it satisfies the condition (M1).
\end{itemize}
More information of these and other generalizations of metrics can be found in \cite{AP} and \cite{Hat}.

Let $d:X\times X\to[0,+\infty)$ be a premetric on a set $X$. For any point $x\in X$ and $\e>0$ let $$B_d(x;\e)=\{y\in X:d(x,y)<\e\}\mbox{ and }B_d(x;\e]=\{y\in X:d(x,y)\le\e\}$$denote the {\em open} and {\em closed $\e$-balls} centered at $x$ respectively. For a subset $A\subset X$ we put $B_d(A;\e)=\bigcup_{a\in A}B_d(a;\e)$ be the $\e$-neighborhood of $A$ in $X$.

We shall say that a premetric $d$ on a topological space $X$
\begin{itemize}
\item has {\em open balls} if for every $x\in X$ and $\e>0$ the open $\e$-ball $B_d(x;\e)$ is open in $X$;
\item has {\em closed balls} if for every $x\in X$ and $\e>0$ the closed $\e$-ball $ B_d(x;\e]$ is closed in $X$;
\item has {\em open and closed balls} it has both open balls and closed balls.
\end{itemize}
It is easy to see that a premetric $d:X\times X\to[0,\infty)$ on a topological space $X$ has open and closed balls if and only if $d$ is
right-continuous.

A premetric $d:X\times X\to[0,\infty)$ on a topological space $X$ is defined to be
\begin{itemize}
\item {\em right-continuous} if for every $x_0\in X$ the function $d(x_0,\cdot):X\to[0,\infty)$, $d(x_0,\cdot):x\mapsto d(x_0,x)$, is continuous;
\item {\em left-continuous} if for every $x_0\in X$ the function $d(\cdot,x_0):X\to[0,\infty)$, $d(\cdot,x_0):x\mapsto d(x,x_0)$, is continuous;
\item {\em separately continuous} if $d$ is left-continuous and right-continuous;
\item {\em continuous} if $d$ is continuous as a map from $X\times X$ to $[0,\infty)$.
\end{itemize}
It is easy to show that a pseudometric $d$ on a topological space $X$ is continuous if and only if it is left-continuous or right-continuous if and only if it has open balls.

We say that a family $\mathcal D$ of premetrics on a topological space $X$ generates the topology of $X$ if the family $\{B_d(x,\e):d\in\mathcal D,\;x\in X,\;\e>0\}$ is a subbase of the topology of $X$.
If the family $\mathcal D$ consists of a single premetric  $d$, then we will say that the topology of $X$ is generated by the premetric $d$.

It is known (\cite{Ku1}, \cite{Ku2}) that the topology of any space can be generated by a family of quasi-pseudometrics.

\subsection{Distance functions}
Given a premetric $d:X\times X\to[0,\infty)$ on a topological space $X$ and a non-empty subset $A\subset X$ let us consider three distance functions
$$
\begin{aligned}
&d_A:X\to [0,\infty),\;\;d_A:x\mapsto \inf\{\e>0\colon x\in B_d(A,\e)\,\},\\
&\overline{d}_A:X\to [0,\infty),\;\;\overline{d}_A:x\mapsto \inf\{\e>0\colon x\in \overline{B_d(A,\e)}\,\},\mbox{ \;\;and}\\
&\overline{d}^\circ_A:X\to [0,\infty),\;\;\overline{d}^\circ_A:x\mapsto \inf\{\e>0\colon x\in B_d(A,\e)\cup\overline{B_d(A,\e)}^\circ\}.
\end{aligned}
$$
The inclusions $B_d(A,\e)\subset B_d(A,\e)\cup \overline{B_d(A,\e)}^\circ\subset \overline{B_d(A,\e)}$ holding for every $\e>0$ imply that
$$\overline{d}_A\le \overline{d}^\circ_A\le d_A.$$
If $d$ is a premetric with open balls, then $B_d(A,\e)\subset \overline{B_d(A,\e)}^\circ$ and the definition of the distance function $\overline{d}^\circ_A$ has a simpler and more natural form $\overline{d}^\circ_A(x)=\inf\{\e>0\colon x\in \overline{B_d(A,\e)}^\circ\}$.

\begin{proposition}\label{p1.2n} For a premetric $d$ with open balls on a topological space $X$ and a non-empty subset $A\subset X$ the following conditions are equivalent:
\begin{enumerate}
\item $\overline{d}_A=\overline{d}^\circ_A$;
\item the distance function $\overline{d}_A:X\to[0,\infty)$ is continuous;
\item the distance function $\overline{d}^\circ_A:X\to[0,\infty)$ is continuous.
\end{enumerate}
The equivalent conditions \textup{(1)--(3)} follows from the equivalent conditions:
\begin{itemize}
\item[(4)] the distance function $d_A$ is continuous;
\item[(5)] $\overline{d}_A=\overline{d}^\circ_A=d_A$.
\end{itemize}
\end{proposition}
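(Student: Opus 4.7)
My plan rests on the observation that when $d$ has open balls, each of the three distance functions has one-sided continuity built into its definition. The level set $\{x:\overline{d}_A(x)>c\}$ equals $X\setminus\overline{B_d(A,c)}$ and is therefore open, so $\overline{d}_A$ is always lower semi-continuous. The level set $\{x:\overline{d}^\circ_A(x)<c\}$ equals $\bigcup_{\e<c}\overline{B_d(A,\e)}^\circ$ and is open, so $\overline{d}^\circ_A$ is always upper semi-continuous. Finally, $\{x:d_A(x)<c\}=B_d(A,c)$ is open by the open-ball hypothesis, so $d_A$ is also always upper semi-continuous. Combined with the standing chain $\overline{d}_A\le\overline{d}^\circ_A\le d_A$, these facts reduce the continuity of any one of the three functions to an equality between two of them.

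With this setup, (1) $\Rightarrow$ (2) and (1) $\Rightarrow$ (3) are immediate: when $\overline{d}_A=\overline{d}^\circ_A$, the common function is simultaneously lower and upper semi-continuous, hence continuous. For (2) $\Rightarrow$ (1) I would take $x$ with $\overline{d}_A(x)<c$, pick $c'\in(\overline{d}_A(x),c)$, and observe that by continuity of $\overline{d}_A$ the set $\{\overline{d}_A<c'\}$ is open and by definition contained in $\overline{B_d(A,c')}$, so it lies in $\overline{B_d(A,c')}^\circ$, which forces $\overline{d}^\circ_A(x)\le c'<c$. This yields $\overline{d}^\circ_A\le\overline{d}_A$ and hence equality. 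For (3) $\Rightarrow$ (1) I would use a density argument: given $x$ with $\overline{d}_A(x)<c$, pick $\e'<c$ with $x\in\overline{B_d(A,\e')}$; every neighborhood of $x$ meets the nonempty open set $B_d(A,\e')\subset\overline{B_d(A,\e')}^\circ$, and each point $y$ of this intersection satisfies $\overline{d}^\circ_A(y)\le\e'$. Running $V$ over the neighborhood filter of $x$ produces a net converging to $x$ whose values under $\overline{d}^\circ_A$ are bounded by $\e'$, so continuity of $\overline{d}^\circ_A$ at $x$ forces $\overline{d}^\circ_A(x)\le\e'<c$.

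For the remaining assertions, (5) $\Rightarrow$ (4) is immediate from (1) $\Leftrightarrow$ (2), since (5) makes $d_A=\overline{d}_A$ continuous. For (4) $\Rightarrow$ (5) I replay the density argument with $d_A$ in place of $\overline{d}^\circ_A$: for $x$ with $\overline{d}_A(x)<c$ and $\e'$ as above, every neighborhood of $x$ contains a point $y\in B_d(A,\e')$ with $d_A(y)\le\e'$, so continuity of $d_A$ yields $d_A(x)\le\e'<c$. The resulting inequality $d_A\le\overline{d}_A$, combined with the chain $\overline{d}_A\le\overline{d}^\circ_A\le d_A$, produces (5). The main technical step throughout is the density/net argument, where the open-ball hypothesis is doing the essential work: it is precisely what guarantees that $B_d(A,\e')$ is a nonempty open subset of $\overline{B_d(A,\e')}$ meeting every neighborhood of each of its closure points, so that continuity of the relevant function at $x$ can be exploited through a net of points on which the function is already controlled.
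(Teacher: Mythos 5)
Your argument is correct and is essentially the paper's proof in different clothing: the three semicontinuity observations together with the chain $\overline{d}_A\le\overline{d}^\circ_A\le d_A$ are exactly the paper's mechanism, your step for (2)$\Rightarrow$(1) is the paper's verbatim, and your density/net arguments for (3)$\Rightarrow$(1) and (4)$\Rightarrow$(5) are just the pointwise form of the paper's observation that an open (resp.\ closed) preimage containing $B_d(A,\e')$ must lie in $\overline{B_d(A,\e')}^\circ$ (resp.\ contain $\overline{B_d(A,\e')}$). The only blemish is the claimed identity $\{x:\overline{d}_A(x)>c\}=X\setminus\overline{B_d(A,c)}$, which can fail when the closed balls jump at $c$ --- the correct statement is $\{x:\overline{d}_A(x)>c\}=\bigcup_{\e>c}\bigl(X\setminus\overline{B_d(A,\e)}\bigr)$ --- but since this union is still open, the lower semicontinuity of $\overline{d}_A$ that you actually use is unaffected.
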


\begin{proof} First we prove that (1) implies (2) and (3). Assume that  $\overline{d}_A=\overline{d}^\circ_A$. The continuity of the function  $\overline{d}_A=\overline{d}^\circ_A$ will follow as soon as we check that for any positive real number $r$ the sets $(\overline{d}^\circ_A)^{-1}\big([0,r)\big)$ and $(\overline{d}_A)^{-1}\big((r,\infty)\big)$ are open in $X$. The first set is open as $(\overline{d}^\circ_A)^{-1}\big([0,r)\big)=\bigcup_{\e<r}\overline{B_d(A,\e)}^\circ.$
The second set is open since $(\overline{d}_A)^{-1}\big((r,\infty)\big)=\bigcup_{\e>r}X\setminus \overline{B_d(A,\e)}$.

Next we shall prove that (2) or (3) implies (1).
Assume that $\overline{d}_A$ or $\overline{d}^\circ_A$ is continuous. Since $\overline{d}_A\le\overline{d}^\circ_A$, it suffices to check that  $\overline{d}_A(x)\ge\overline{d}^\circ_A(x)$ for every $x\in X$. Assume conversely that
 $\overline{d}_A(x)<\overline{d}^\circ_A(x)$ for some $x\in X$ and choose any real number $r$ such that $\overline{d}_A(x)<r<\overline{d}^\circ_A(x)$. If the function $\overline{d}_A$ is continuous, then
 the set $\overline{d}^{-1}_A\big([0,r)\big)=\bigcup_{\e<r}\overline{B_d(A,\e)}$ is open and hence is contained in $\overline{B_d(A,r)}^\circ$. Then $$x\in \overline{d}^{-1}_A\big([0,r)\big)=\bigcup_{\e<r}\overline{B_d(A,\e)}\subset\overline{B_d(A,r)}^\circ$$
 implies $\overline{d}^\circ_A(x)\le r<\overline{d}^\circ_A(x)$, which is a desired contradiction.

 If the function  $\overline{d}^\circ_A$ is continuous, then
 the set $(\overline{d}^\circ_A)^{-1}\big((r,\infty)\big)=\bigcup_{\e>r}X\setminus \overline{B_d(A,\e)}^\circ\subset X\setminus \overline{B_d(A,r)}^\circ$ is open and hence is contained in $\intr_X(X\setminus \overline{B_d(A,r)}^\circ)=X\setminus\cl_X(\overline{B_d(A,r)}^\circ)=X\setminus\overline{B_d(A,r)}.$
 Then
 $$x\in (\overline{d}^\circ_A)^{-1}\big((r,\infty)\big)=\bigcup_{\e>r}X\setminus \overline{B_d(A,\e)}^\circ\subset X\setminus \overline{B_d(A,r)}$$
 implies $\overline{d}_A(x)\ge r>\overline{d}_A(x)$, which is a desired contradiction.
 \smallskip

It is clear that $(5)\Ra(1)$. The implication $(5)\Ra(4)$ follows from the equivalence (1)--(3).
It remain to prove that $(4)\Ra(5)$. Assume that the distance function $d_A$ is continuous but the equality $\overline{d}_A=\overline{d}_A^\circ=d_A$ does not hold. Taking into account that  $\overline{d}_A\le\overline{d}_A^\circ\le d_A$, we could find a point $x\in X$ such that $\overline{d}_A(x)<d_A(x)$. Choose a real number $r$ with $\overline{d}_A(x)<r<d_A(x)$.
Then $x\in\overline{B_d(A,r)}$. Observe that $B_d(A,r)\subset d_A^{-1}([0,r))\subset d_A^{-1}([0,r])$.
By the continuity of $d_A$, the set $d_A^{-1}([0,r])$ is closed and hence
$x\in\overline{B_d(A,r)}\subset d_A^{-1}([0,r])$ and $d_A(x)\le r<d_A(x)$, which is a desired contradiction showing that $\overline{d}=\overline{d}^\circ=d$.
 \end{proof}

A premetric $d:X\times X\to[0,\infty)$ will be called
{\em $\dist$-continuous} (resp. {\em $\overline{\dist}$-continuous}, {\em $\overline{\dist}^\circ$-continuous}) if for every non-empty subset $A\subset X$ the distance function $d_A:X\to[0,\infty)$ (resp. $\overline d_A$, $\overline{d}^\circ_A$) is continuous.
Since $d(x,y)=d_{\{x\}}(y)$ for $x,y\in X$, every $\dist$-continuous premetric is right-continuous.

Proposition~\ref{p1.2n} implies the following characterization:

\begin{corollary}\label{c1.3n} Let $d$ be a premetric on a topological space $X$. If $d$ has open balls, then the following conditions are equivalent:
\begin{enumerate}
\item $d$ is $\overline{\dist}$-continuous;
\item $d$ is $\overline{\dist}^\circ$-continuous;
\item $\overline{d}_A=\overline{d}^\circ_A$ for any non-empty subset $A\subset X$.
\end{enumerate}
The equivalent conditions \textup{(1)--(3)} follows from the equivalent conditions:
\begin{itemize}
\item[(4)] $d$ is $\dist$-continuous;
\item[(5)] $\overline{d}_A=\overline{d}^\circ_A=d_A$ for any non-empty subset $A\subset X$.
\end{itemize}
The equivalent conditions \textup{(4),(5)} imply:
\begin{itemize}
\item[(6)] $d$ is right-continuous.
\end{itemize}
\end{corollary}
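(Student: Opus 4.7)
The plan is to deduce Corollary~\ref{c1.3n} by pointwise application of Proposition~\ref{p1.2n}, since each item in the corollary is nothing but the universal quantification over non-empty subsets $A\subset X$ of the corresponding item in the proposition. Because the hypothesis ``$d$ has open balls'' is a property of the premetric itself (independent of $A$), the same hypothesis of Proposition~\ref{p1.2n} is automatically satisfied for every choice of $A$, so each implication in the proposition transfers verbatim.

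More concretely, first I would verify the equivalence $(1)\Leftrightarrow(2)\Leftrightarrow(3)$. Fix a non-empty $A\subset X$. By Proposition~\ref{p1.2n}, for the single set $A$ the three conditions $\overline{d}_A=\overline{d}^\circ_A$, continuity of $\overline{d}_A$, and continuity of $\overline{d}^\circ_A$ are equivalent. Quantifying over all non-empty $A\subset X$ yields exactly the equivalence of (1), (2), (3) in the corollary (note the definitions: $d$ is $\overline{\dist}$-continuous means $\overline{d}_A$ is continuous for every $A$, and similarly for $\overline{\dist}^\circ$).

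Next, for the implications $(4)\Leftrightarrow(5)\Rightarrow(1),(2),(3)$: again fix a non-empty $A\subset X$. By the equivalence $(4)\Leftrightarrow(5)$ in Proposition~\ref{p1.2n}, continuity of $d_A$ is equivalent to the triple equality $\overline{d}_A=\overline{d}^\circ_A=d_A$; quantifying over $A$ gives the equivalence $(4)\Leftrightarrow(5)$ here. Since $(5)\Rightarrow(3)$ is immediate from the definitions (the triple equality trivially implies the double equality for each $A$), and $(3)\Leftrightarrow(1)\Leftrightarrow(2)$ was already established, we obtain the stated forward implication.

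Finally, for $(4)\Rightarrow(6)$, I would apply the observation made in the paragraph preceding the corollary: for any $x,y\in X$ one has $d(x,y)=d_{\{x\}}(y)$. Thus if every distance function $d_A$ is continuous, then in particular $d_{\{x\}}=d(x,\cdot)$ is continuous for each $x\in X$, which is precisely the right-continuity of $d$. Since the entire argument is just repeated application of Proposition~\ref{p1.2n} together with this trivial identification, there is essentially no obstacle; the only thing to be careful about is to match the quantification over $A$ correctly with the definitions of $\dist$-, $\overline{\dist}$-, and $\overline{\dist}^\circ$-continuity.
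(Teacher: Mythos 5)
Your proposal is correct and is exactly the argument the paper intends: the corollary is stated as an immediate consequence of Proposition~\ref{p1.2n}, obtained by quantifying each pointwise (in $A$) equivalence over all non-empty $A\subset X$, with the final implication $(4)\Rightarrow(6)$ coming from the identity $d(x,y)=d_{\{x\}}(y)$ noted just before the corollary. No gaps.
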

The implications from Corollary~\ref{c1.3n} are shown on the following diagram:
$$\xymatrix{
\mbox{$\overline{\dist}$-continuous}\ar@{<=>}[r]&
\mbox{$\overline{\dist}^\circ$-continuous}&
\mbox{$\dist$-continuous}\ar@{=>}[l]\ar@{=>}[r]&
\mbox{right-continuous}.
}
$$

Having in mind that $d(x,y)=d_{\{x\}}(y)$ for any $x,y\in X$, let us define two modifications $\overline{d}$ and $\overline{d}^\circ$ of the premetric $d$ letting
$$\mbox{$\overline{d}(x,y)=\overline{d}_{\{x\}}(y)$ and $\overline{d}^\circ(x,y)=\overline{d}^\circ_{\{x\}}(y)$ for $x,y\in X$.}$$
The premetrics $\overline{d}$ and $\overline{d}^\circ$ are called the {\em regularization} and the {\em semiregularization} of $d$, respectively.

It follows that $\overline{d}\le\overline{d}^\circ\le d$. Moreover,
$$B_{\bar{d}}(x,\e)=\bigcup_{\delta<\e}\overline{B_d(x,\delta)}\mbox{ \ and \ }B_{\overline{d}^\circ}(x,\e)=\bigcup_{\delta<\e}B(x,\delta)\cup\overline{B_d(x,\delta)}^\circ\mbox{ for any $x\in X$ and $\e>0$}.$$

\begin{proposition}\label{p1.4n} Let $d:X\times X\to [0,\infty)$ be a premetric on a topological space $X$.
\begin{enumerate}
\item The regularization $\overline{d}$ of $d$ is a premetric with closed balls on $X$;
\item If $d$ has open balls, then the semiregularization $\overline{d}^\circ$ of $d$ is a premetric with open balls on $X$.
\end{enumerate}
\end{proposition}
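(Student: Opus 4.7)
The plan is to verify each clause by separately checking axiom (M1) and then the topological property of the balls. Axiom (M1) is immediate in both cases: since $d(x,x)=0$, the point $x$ lies in $B_d(x,\delta)$ for every $\delta>0$, hence also in $\overline{B_d(x,\delta)}$ and in $B_d(x,\delta)\cup\overline{B_d(x,\delta)}^\circ$, so the infima defining $\overline d(x,x)$ and $\overline d^\circ(x,x)$ both vanish.

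For clause~(1), the main step is to establish the identity $B_{\overline d}(x,\e]=\bigcap_{\delta>\e}\overline{B_d(x,\delta)}$. I would first record the monotonicity $B_d(x,\delta)\subseteq B_d(x,\delta')$ whenever $\delta<\delta'$, which upgrades to $\overline{B_d(x,\delta)}\subseteq\overline{B_d(x,\delta')}$; this makes the set $S_y:=\{\delta>0:y\in\overline{B_d(x,\delta)}\}$ upward closed in $(0,\infty)$. From upward closedness the inequality $\overline d(x,y)=\inf S_y\le\e$ is equivalent to $(\e,\infty)\subseteq S_y$, which is exactly the claimed formula. The right-hand side is then closed as an intersection of closed sets, giving the closed-balls property.

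For clause~(2), I would read off the assertion from the formula $B_{\overline{d}^\circ}(x,\e)=\bigcup_{\delta<\e}\bigl(B_d(x,\delta)\cup\overline{B_d(x,\delta)}^\circ\bigr)$ displayed just before the proposition. Under the hypothesis that $d$ has open balls, each $B_d(x,\delta)$ is open; the set $\overline{B_d(x,\delta)}^\circ$ is always open as an interior. Hence $B_{\overline{d}^\circ}(x,\e)$ is a union of open sets and therefore open.

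Essentially the entire argument is bookkeeping and presents no real obstacle. The one mildly delicate point is in~(1): whereas the open ball of $\overline d$ is conveniently given as a union of closed sets (which is not obviously closed or open), the closed ball must be expressed as an \emph{intersection} of closed sets, and this rewriting is exactly what the upward-closedness of $S_y$ provides.
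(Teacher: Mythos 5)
Your proof is correct and follows essentially the same route as the paper: the paper establishes (1) by observing that $X\setminus B_{\overline{d}}(x,r]=\bigcup_{\e>r}X\setminus\overline{B_d(x,\e)}$ is open, which is just the complemented form of your identity $B_{\overline{d}}(x,\e]=\bigcap_{\delta>\e}\overline{B_d(x,\delta)}$, and it proves (2) from the same displayed union formula you cite. Your explicit justification of the intersection identity via the upward-closedness of $S_y$ is a useful elaboration of a step the paper passes over with ``It follows that,'' but it is not a different argument.
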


\begin{proof} 1. It follows that for every $r\in [0,\infty)$ and $x\in X$ the set
$$X\setminus B_{\bar{d}}(x,r]=\bigcup_{\e>r}X\setminus \overline{B_d(x,\e)}$$is open, which implies that the closed ball $B_{\overline{d}}(x,r]$ is closed in $X$.
\smallskip

2. If $d$ has open balls, then  for every $r\in [0,\infty)$ and $x\in X$ the ball
$$B_{\overline{d}^\circ}(x,r)=\bigcup_{\e<r}\big(B_d(x,\e)\cup\overline{B_{d}(x,\e)}^\circ\,\big)=
\bigcup_{\e<r}\overline{B_d(x,\e)}^\circ$$is open.
\end{proof}

The operations of regularization and semiregularization are idempotent in the following sense.

\begin{proposition}\label{p1.4m} Given a premetric $d$ on a topological space $X$, consider its modifications $p=\overline{d}$ and $\rho=\overline{d}^\circ$. For any non-empty subset $A\subset X$ we get $\overline{p}_A=\overline{d}_A$ and $\overline{\rho}^\circ_A=\overline{d}^\circ_A$.
\end{proposition}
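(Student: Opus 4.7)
The strategy is the same for both equalities: compare the relevant $\e$-balls of the modified premetrics $p=\overline{d}$ and $\rho=\overline{d}^\circ$ with the corresponding $\e$-balls of $d$, and then chase the infima defining the various distance functions. The two displayed formulas $B_p(x,\e)=\bigcup_{\delta<\e}\overline{B_d(x,\delta)}$ and $B_{\rho}(x,\e)=\bigcup_{\delta<\e}\bigl(B_d(x,\delta)\cup\overline{B_d(x,\delta)}^\circ\bigr)$, already recorded before Proposition~\ref{p1.4n}, will do the bulk of the work.

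For the equality $\overline{p}_A=\overline{d}_A$, I will show the stronger set-theoretic identity
\[
\overline{B_p(A,\e)}=\overline{B_d(A,\e)}\qquad\text{for every }\e>0,
\]
from which the equality of the infima $\overline{p}_A(x)=\inf\{\e>0:x\in\overline{B_p(A,\e)}\}$ and $\overline{d}_A(x)=\inf\{\e>0:x\in\overline{B_d(A,\e)}\}$ is immediate. The inclusion $B_d(A,\e)\subset B_p(A,\e)$ follows from $p\le d$ and yields $\overline{B_d(A,\e)}\subset\overline{B_p(A,\e)}$; the reverse inclusion uses the displayed formula for $B_p$ to write $B_p(A,\e)=\bigcup_{a\in A}\bigcup_{\delta<\e}\overline{B_d(a,\delta)}\subset\overline{B_d(A,\e)}$, and then takes closures.

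For the equality $\overline{\rho}^\circ_A=\overline{d}^\circ_A$, a single-$\e$ identity of the form above need not hold, so I plan a sandwich argument. Write $f(\e)=B_d(A,\e)\cup\overline{B_d(A,\e)}^\circ$ and $g(\e)=B_\rho(A,\e)\cup\overline{B_\rho(A,\e)}^\circ$ and establish
\[
f(\delta)\subset g(\e)\subset f(\e)\qquad\text{for all }0<\delta<\e.
\]
The inclusion $g(\e)\subset f(\e)$ reduces, via the displayed formula for $B_\rho$, to $B_\rho(A,\e)\subset\bigcup_{\delta<\e}f(\delta)\subset f(\e)$, together with the observation that $B_\rho(A,\e)\subset\overline{B_d(A,\e)}$, whose closure is already closed, so $\overline{B_\rho(A,\e)}^\circ\subset\overline{B_d(A,\e)}^\circ$. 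For the inclusion $f(\delta)\subset g(\e)$ one uses $\rho\le d$ to get $B_d(A,\delta)\subset B_\rho(A,\e)$, and then monotonicity of closure and interior gives $\overline{B_d(A,\delta)}^\circ\subset\overline{B_\rho(A,\e)}^\circ$. A one-line infima-chasing argument applied to the sandwich then yields $\overline{\rho}^\circ_A(x)=\overline{d}^\circ_A(x)$ for all $x\in X$.

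The main subtlety I expect is the well-known asymmetry $\bigcup_{a\in A}\overline{B_d(a,\delta)}^\circ\subset\overline{B_d(A,\delta)}^\circ$, where equality can fail. It is precisely this asymmetry that prevents us from obtaining a single-$\e$ equality $g(\e)=f(\e)$ for the semiregularization and forces the slightly more delicate sandwich. Once the inclusion is used in the correct direction in each step, the remaining manipulations are bookkeeping about monotone families of sets and infima.
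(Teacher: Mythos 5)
Your proof is correct and follows essentially the same route as the paper's: both arguments rest on the key inclusions $B_p(A,\e)\subset\overline{B_d(A,\e)}$ and $B_\rho(A,\e)\subset B_d(A,\e)\cup\overline{B_d(A,\e)}^\circ$, obtained from the displayed ball formulas together with $\bigcup_{a\in A}\overline{B_d(a,\delta)}\subset\overline{B_d(A,\delta)}$ (resp. the same with interiors of closures), the paper merely phrasing the conclusion as a contradiction on distance values rather than as a set-theoretic sandwich. One small remark: the asymmetry you flag does not actually force the weaker $\delta<\e$ sandwich, since $\rho\le d$ already gives $f(\e)\subset g(\e)$ for the same $\e$, so the single-$\e$ identity $g(\e)=f(\e)$ holds after all — though your version is of course still sufficient.
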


\begin{proof} First we prove the equality $\overline{p}_A=\overline{d}_A$. The inequality $p=\overline{d}\le d$ implies that $\overline{p}_A\le\overline{d}_A$. So, it suffices to prove that $\overline{p}_A\ge \overline{d}_A$. Assuming the converse, we could find a point $x\in X$ such that $\overline{p}_A(x)<\overline{d}_A(x)$. Choose any real number $r$ such that $\overline{p}_A(x)<r<\overline{d}_A(x)$. It follows from $\overline{p}_A(x)<r$ that $$x\in \overline{B_p(A,r)}=\overline{B_{\bar{d}}(A,r)}=\cl_X\Big(\bigcup_{\e<r}\overline{B_d(A,\e)}\Big)\subset \cl_X(\overline{B_d(A,r)})=\overline{B_d(A,r)}$$and hence $\overline{d}_A(x)\le r<\overline{d}_A(x)$, which is a desired contradiction.

Next, we prove that $\overline{p}^\circ_A=\overline{d}^\circ_A$. The inequality $\rho=\overline{d}^\circ\le d$ implies that $\overline{\rho}^\circ_A\le\overline{d}^\circ_A$. So, it suffices to prove that $\overline{\rho}^\circ_A\ge \overline{d}^\circ_A$. Assuming the converse, we could find a point $x\in X$ such that $\overline{\rho}^\circ_A(x)<\overline{d}^\circ_A(x)$. Choose any real number $r$ such that $\overline{\rho}^\circ_A(x)<r<\overline{d}^\circ_A(x)$. It follow from $\overline{\rho}^\circ_A(x)<r$ that $$
\begin{aligned}
x&\in B_\rho(A,r)\cup \overline{B_\rho(A,r)}^\circ= B_{\overline{d}^\circ}(A,r)\cup\overline{B_{\overline{d}^\circ}(A,r)}^\circ=\\
&=
\bigcup_{\e<r}\big(B_d(A,\e)\cup\overline{B_d(A,\e)}^\circ\big)\cup
\intr_X\cl_X\Big(\bigcup_{\e<r}B_d(A,\e)\cup\overline{B_d(A,\e)}^\circ\Big)\subset\\
 &\subset B_d(A,r)\cup \overline{B_d(A,r)}^\circ\cup \intr_X\cl_X(B_d(A,r)\cup\overline{B_d(A,r)}^\circ)=B_d(A,r)\cup\overline{B_d(A,r)}^\circ
\end{aligned}
$$and hence $\overline{d}^\circ_A(x)\le r<\overline{d}^\circ_A(x)$, which is a desired contradiction.
\end{proof}

  Proposition~\ref{p1.2n}, \ref{p1.4m} and Corollary~\ref{c1.3n} imply:

\begin{corollary}\label{c1.6n} Let $d$ be a premetric with open balls on a topological space $X$.
\begin{enumerate}
\item If $\overline{d}=\overline{d}^\circ$, then the premetric $\overline{d}=\overline{d}^\circ$ is right-continuous.
\item If the premetric $d$ is $\overline{\dist}$-continuous, then $\overline{d}=\overline{d}^\circ$ and the premetric $\overline{d}=\overline{d}^\circ$ is right-continuous and $\overline{\dist}$-continuous.
\end{enumerate}
\end{corollary}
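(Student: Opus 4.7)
The plan is to reduce both parts to Proposition~\ref{p1.2n}, Proposition~\ref{p1.4m}, and Corollary~\ref{c1.3n}, translating premetric statements about $\overline{d}$ and $\overline{d}^\circ$ into statements about the distance functions $\overline{d}_A$ and $\overline{d}^\circ_A$ by way of the defining identities $\overline{d}(x,y)=\overline{d}_{\{x\}}(y)$ and $\overline{d}^\circ(x,y)=\overline{d}^\circ_{\{x\}}(y)$.

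For (1), right-continuity of $\overline{d}$ is by definition the continuity of the map $y\mapsto\overline{d}(x_0,y)=\overline{d}_{\{x_0\}}(y)$ for every $x_0\in X$. The hypothesis $\overline{d}=\overline{d}^\circ$ specializes to the equality $\overline{d}_{\{x_0\}}=\overline{d}^\circ_{\{x_0\}}$ of distance functions, and since $d$ has open balls, the equivalence $(1)\Leftrightarrow(2)$ of Proposition~\ref{p1.2n} applied with $A=\{x_0\}$ yields continuity of $\overline{d}_{\{x_0\}}$. Running this for each $x_0$ gives right-continuity of $\overline{d}=\overline{d}^\circ$.

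For (2), I would first use Corollary~\ref{c1.3n}$(1)\Leftrightarrow(3)$ (applicable since $d$ has open balls) to rephrase the $\overline{\dist}$-continuity of $d$ as the equality $\overline{d}_A=\overline{d}^\circ_A$ for every non-empty $A\subset X$. Specializing to singletons $A=\{x\}$ yields the premetric identity $\overline{d}=\overline{d}^\circ$, after which part (1) immediately produces the right-continuity of $\overline{d}=\overline{d}^\circ$.

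It remains to check the $\overline{\dist}$-continuity of $p:=\overline{d}$, i.e., continuity of $\overline{p}_A$ for every non-empty $A\subset X$. The decisive input here is the idempotency result Proposition~\ref{p1.4m}, which gives $\overline{p}_A=\overline{d}_A$; the right-hand side is continuous by the assumed $\overline{\dist}$-continuity of $d$, so $\overline{p}_A$ is continuous, finishing the proof. I do not foresee any genuine obstacle in this argument: it is a straightforward chain of references to the three earlier results, with all the substantive work already performed in Propositions~\ref{p1.2n} and~\ref{p1.4m}.
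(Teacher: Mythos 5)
Your proof is correct and follows essentially the same route as the paper: part (1) is Proposition~\ref{p1.2n} applied to singletons, and part (2) chains Corollary~\ref{c1.3n} (to get $\overline{d}=\overline{d}^\circ$ and right-continuity) with the idempotency identity $\overline{p}_A=\overline{d}_A$ of Proposition~\ref{p1.4m} for the $\overline{\dist}$-continuity of $\overline{d}$. The only cosmetic difference is that you obtain right-continuity in (2) by invoking part (1), whereas the paper reads it off directly from the continuity of each $\overline{d}_{\{x\}}$; these are the same argument.
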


\begin{proof}
The first statement follows from Proposition~\ref{p1.2n}. Now assume that the premetric $d$ is $\overline{\dist}$-continuous. By Corollary~\ref{c1.3n}, $\overline{d}_A=\overline{d}^\circ_A$ for every non-empty $A\subset X$. In particular, $\overline{d}_{\{x\}}=\overline{d}^\circ_{\{x\}}$ for every $x\in X$, which implies $\overline{d}=\overline{d}^\circ$. By the $\overline{\dist}$-continuity of $d$, for every $x\in X$ the distance map $\overline{d}_{\{x\}}$ is continuous, which implies that the premetric $\overline{d}$ is right-continuous. The $\overline{\dist}$-continuity of the premetric $\overline{d}$ follows from Proposition~\ref{p1.4m} and the $\overline{\dist}$-continuity of the premetric $d$.
\end{proof}

\subsection{Entourages and balls} By an {\em entourage} on a set $X$ we shall understand any subset $U\subset X\times X$ containing the diagonal $\Delta_X=\{(x,y)\in X\times X:x=y\}$.

Given two entourages $U,V$ on $X$ let
$$U\circ V=\big\{(x,z)\in X\times X:\mbox{$\exists y\in X$ such that $(x,y)\in U$ and $(y,z)\in V$}\big\}$$be their composition and
$U^{-1}=\{(y,x)\in X\times X:(x,y)\in U\}$ be the inverse entourage to $U$. We put $U^1=U$ and $U^{n+1}=U^n\circ U$ for $n\ge 0$. Sometimes it will be convenient to denote the composition $U\circ V$ by $UV$.

For an entourage $U\subset X\times X$ and a point $x\in X$ the set $B(x;U)=\{y\in X:(x,y)\in U\}$ is called the {\em $U$-ball centered} at $x$. For a subset $A\subset X$ the set $B(A;U)=\bigcup_{a\in A}B(a;U)$ is the {\em $U$-neighborhood} of $A$.

Observe that for a premetric $d$ on a set $X$ we get $B_d(x;\e)=B(x;[d]_{<\e})$ and $B_d(x;\e]=B(x;[d]_{\le\e})$where
$$[d]_{<\e}=\{(x,y)\in X\times X:d(x,y)<\e\}\mbox{ \ and \ }
[d]_{\le\e}=\{(x,y)\in X\times X:d(x,y)\le\e\}.
$$

For an entourage $U\subset X\times X$ on a topological space $X$ by $\overline{B}(x;U)$, $B^\circ\kern-1pt(x;U)$, and $\overline{B}^\circ\kern-2pt(x;U)$ we shall denote the closure, interior, and the interior of the closure of the $U$-ball $B(x;U)$ of $x$ in the topological space $X$.

Let also $$\overline{U}=\bigcup_{x\in X}\{x\}\times \overline{B}(x;U),\;\;
{U}^\circ=\bigcup_{x\in X}\{x\}\times B^\circ\kern-1pt(x;U),\mbox{ \ and \ }
\overline{U}^\circ=\bigcup_{x\in X}\{x\}\times \overline{B}^\circ\kern-2pt(x;U)$$be the closure, interior and the interior of the closure of $U$ in $X_d\times X$ where $X_d$ is the set $X$ endowed with the discrete topology.

A family of entourages $\mathcal B$ on a set $X$ is called {\em multiplicative} if for any entourages $U,V\in\mathcal B$ their composition $U\circ V$ belongs to $\mathcal B$.

\subsection{The balanced product of entourages} In this subsection we shall introduce and discuss the operation of balanced product of entourages. This operation will play a crucial role in the proof of Theorem~\ref{main}. Let $\mathcal B$ be any multiplicative family of entourages on a set $X$.

For any entourages $U_1,\dots,U_n\in\mathcal B$ define their {\em balanced product} $\Pi_n(U_1,\dots,U_n)$ by recursion letting $\Pi_1(U_1)=U_1$ and $$\Pi_{n}(U_1,U_2,\dots,U_n)=\Pi_{n-1}(U_2,\dots,U_n)\circ U_1\circ \Pi_{n-1}(U_2,\dots,U_n)\mbox{ \ for $n>1$}.$$ Since the base $\mathcal B$ is multiplicative, the entourage $\Pi_n(U_1,\dots,U_n)$ belongs to $\mathcal B$.

Let $\mathcal B^{<\IN}=\bigcup_{n\in\IN}\mathcal B^n$ and define the function $\Pi:\mathcal B^{<\IN}\to \mathcal B$ letting $\Pi|\mathcal B^n=\Pi_n$ for every $n\in\IN$. The function $\Pi$ has the following associativity property:

\begin{lemma} For any numbers $n\ge k>1$ and entourages $U_1,\dots,U_n\in\mathcal B$ we get
$$\Pi(U_1,\dots,U_{n})=\Pi(U_1,\dots,U_{k-1},\Pi(U_k,\dots,U_n)).$$
\end{lemma}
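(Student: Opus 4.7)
The plan is to argue by induction on $n$, uniformly in $k$, exploiting the fact that the recursion defining $\Pi_n$ places the first argument $U_1$ in the middle and delegates the rest symmetrically to the two wings $\Pi_{n-1}(U_2,\dots,U_n)$. Once both sides of the claimed equality are unfolded once by this recursion, associativity reduces to the inductive hypothesis applied to the shorter tuple $(U_2,\dots,U_n)$.

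For the easy extremes the identity holds directly from the definition. When $k=n$, the inner block $\Pi(U_n)$ collapses to $U_n$, so the right-hand side is literally $\Pi(U_1,\dots,U_n)$. When $k=2$, the definition $\Pi_2(U_1,V)=V\circ U_1\circ V$ applied with $V=\Pi(U_2,\dots,U_n)$ expresses the right-hand side as $\Pi(U_2,\dots,U_n)\circ U_1\circ\Pi(U_2,\dots,U_n)$, which is precisely the recursive expansion of the left-hand side. These two remarks also cover the base cases $n=2$ and $n=3$.

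For the inductive step one assumes $n\ge 4$ and $3\le k\le n-1$. By the defining recursion the left-hand side equals $\Pi(U_2,\dots,U_n)\circ U_1\circ \Pi(U_2,\dots,U_n)$, while the right-hand side, being $\Pi$ applied to the $k$-tuple $(U_1,U_2,\dots,U_{k-1},V)$ with $V=\Pi(U_k,\dots,U_n)$, unfolds to $\Pi(U_2,\dots,U_{k-1},V)\circ U_1\circ \Pi(U_2,\dots,U_{k-1},V)$. The inductive hypothesis applied to the $(n-1)$-tuple $(U_2,\dots,U_n)$ with split position $k-1$ (admissible since $3\le k\le n-1$ forces $1<k-1\le n-1$) identifies $\Pi(U_2,\dots,U_n)$ with $\Pi(U_2,\dots,U_{k-1},V)$, closing the induction.

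There is no genuine obstacle here; the result is essentially a routine check that the recursion respects the natural tree structure hidden inside $\Pi_n$. The only point requiring care is the bookkeeping: one must induct on $n$ uniformly in all admissible $k$ rather than on $k$ alone, and one should verify that the shifted split index $k-1$ stays in the admissible range when passing from length $n$ to length $n-1$.
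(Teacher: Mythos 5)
Your proof is correct and follows essentially the same route as the paper's: both unfold the defining recursion once at the front and apply the inductive hypothesis to the $(n-1)$-tuple $(U_2,\dots,U_n)$ with split position $k-1$. The only cosmetic difference is that you induct on $n$ uniformly in $k$ while the paper inducts on $k$ uniformly in the tuple length; the key computation is identical.
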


\begin{proof} This lemma will be proved by induction on $k$. For $k=2$ the equality
$$
\begin{aligned}
\Pi(U_1,\dots,U_{n})&=\Pi_n(U_1,\dots,U_n)=\Pi_{n-1}(U_2,\dots,U_n)\circ U_1\circ \Pi_{n-1}(U_2,\dots,U_n)=\\
&=\Pi_2(U_1,\Pi_{n-1}(U_2,\dots,U_n))=\Pi(U_1,\Pi(U_2,\dots,U_n))
\end{aligned}
$$ follows from the definition of the function $\Pi_n$.

 Assume that for some $k>2$ and every $m\ge k$ and entourages $V_1,\dots,V_m\in\mathcal B$ the equality $$\Pi(V_1,\dots,V_{m})=\Pi_k(V_1,\dots,V_{k-1},\Pi_{m-k+1}(V_k,\dots,V_m))=
\Pi(V_1,\dots,V_{k-1},\Pi(V_k,\dots,V_m))$$ has been proved.

Then for every $n\ge k+1$ and entourages $U_1,\dots,U_n\in\U$, by the definition of the function $\Pi_{k+1}$ and the inductive hypothesis, we get
$$
\begin{aligned}
\Pi(U_1,\dots,&U_{k},\Pi(U_{k+1},\dots,U_n))=\Pi_{k+1}(U_1,\dots,U_{k},\Pi_{n-k}(U_{k+1},\dots,U_n))=\\
&=\Pi_k(U_2,\dots,U_k,\Pi_{n-k}(U_{k+1},\dots,U_n))\circ U_1\circ\Pi_k(U_2,\dots,U_k,\Pi_{n-k}(U_{k+1},\dots,U_n))=\\
&=\Pi_{n-1}(U_2,\dots,U_n)\circ U_1\circ \Pi_{n-1}(U_2,\dots,U_n)=\Pi_n(U_1,\dots,U_n)=\Pi(U_1,\dots,U_n).
\end{aligned}
$$
\end{proof}


\subsection{Quasi-uniformities}

A {\em quasi-uniformity} on a set $X$ is a family $\U$ of entourages on $X$ such that
\begin{itemize}
\item[(U1)] for any $U,V\in\U$ there is $W\in\U$ such that $W\subset U\cap V$;
\item[(U2)] for any  $U\in\U$ there is  $V\in\U$ such that $V\circ V\subset U$;
\item[(U3)] for every entourage $U\in\U$, any subset $V\subset X\times X$ containing $U$ belongs to $\U$.
\end{itemize}
A quasi-uniformity $\U$ on $X$ is called a {\em uniformity} if $\U=\U^{-1}$ where $\U^{-1}=\{U^{-1}:U\in\U\}$.

A {\em quasi-uniform space} is a pair $(X,\U)$ consisting of a set $X$ and a quasi-uniformity $\U$ on $X$.

Each quasi-uniformity $\U$ on a set $X$ generates a topology $\tau_\U$ on $X$ consisting of sets $W\subset X$ such that for each $x\in W$ there is $U\in\U$ such that $B(x;U)\subset W$. It can be shown that for every point $x\in X$ the family $\{B(x;U)\}_{U\in\U}$ is a neighborhood base at $x$.
Talking about topological properties of a quasi-uniform space $(X,\U)$ we shall always have in mind the topology $\tau_\U$. Given a quasi-uniformity $\U$ on a topological space $(X,\tau)$ we shall always assume that $\tau_\U\subset\tau$.

It is known (see \cite{Ku1} or \cite{Ku2}) that the topology of any topological space $X$ is generated by a suitable quasi-uniformity.


Let $\U$ be a quasi-uniformity on a set $X$. A subfamily $\mathcal B\subset\U$ is called
\begin{itemize}
\item a {\em base} of a quasi-uniformity $\U$ if each entourage $U\in\U$ contains some basic entourage $B\in\mathcal B$;
\item a {\em subbase} of a quasi-uniformity $\U$ if the family $\{\cap\F:\F\subset\mathcal B,\;\;|\F|<\w\}$ is a base of $\U$.
\end{itemize}
It is easy to see that each base of a quasi-uniformity satisfies the axioms (U1), (U2), and each family $\mathcal B$ of entourages on $X$ satisfying the axioms (U1), (U2) is a base of a unique quasi-uniformity $\U$, called the  quasi-uniformity generated by the base $\mathcal B$.

By the {\em uniform character} $\chi(\U)$ of a quasi-uniformity $\U$ we understand the smallest cardinality $|\mathcal B|$ of a base $\mathcal B\subset\U$.

A premetric $d$ on a quasi-uniform space $(X,\U)$ is called {\em $\U$-uniform} if for every $\e>0$ the set$$[d]_{<\e}=\{(x,y)\in X\times X:d(x,y)<\e\}$$belongs to the quasi-uniformity $\U$.

\begin{proposition}\label{p1.3} Each $\U$-uniform quasi-pseudometric $d$ on a quasi-uniform space $(X,\U)$ has open balls (with respect to the topology generated by the quasi-uniformity $\U$).
\end{proposition}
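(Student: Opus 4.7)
The plan is to show that every point $y$ of a ball $B_d(x;\e)$ is an interior point with respect to the topology $\tau_\U$, by exhibiting a basic $\tau_\U$-neighborhood of $y$ contained in $B_d(x;\e)$. The two ingredients available are the triangle inequality (M4) for the quasi-pseudometric $d$, and the $\U$-uniformity hypothesis which supplies entourages of the form $[d]_{<\delta}$ in $\U$.

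Concretely, I would fix $x \in X$ and $\e > 0$, pick an arbitrary $y \in B_d(x;\e)$, and set $\delta := \e - d(x,y) > 0$. Since $d$ is $\U$-uniform, the set $[d]_{<\delta} = \{(u,v) \in X \times X : d(u,v) < \delta\}$ lies in $\U$. Hence its vertical section at $y$, namely $B(y; [d]_{<\delta}) = B_d(y;\delta)$, is a $\tau_\U$-neighborhood of $y$ by the description of the topology generated by $\U$ via $U$-balls.

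It then remains to verify the inclusion $B_d(y;\delta) \subset B_d(x;\e)$. This is immediate from the triangle inequality: for any $z \in B_d(y;\delta)$ we have
\[
d(x,z) \le d(x,y) + d(y,z) < d(x,y) + \delta = \e,
\]
so $z \in B_d(x;\e)$. This shows $y$ is an interior point of $B_d(x;\e)$ in $\tau_\U$, and since $y$ was arbitrary, $B_d(x;\e)$ is open.

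There is no real obstacle here; the argument is the standard ``metric topology is open'' argument, adapted to the quasi-pseudometric setting. The only two things to check carefully are that the triangle inequality is used in the correct order (the second argument of $d$ is the one we vary, matching the shape of $B_d(x;\e)$), and that the $\U$-uniformity assumption is exactly what guarantees $B_d(y;\delta)$ is a $\tau_\U$-neighborhood rather than merely a set. Symmetry of $d$ is not invoked, which is important since $d$ need not be symmetric.
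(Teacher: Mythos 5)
Your proof is correct and follows essentially the same route as the paper: take $y\in B_d(x;\e)$, set $\delta=\e-d(x,y)$, use $\U$-uniformity to see that $B_d(y;\delta)=B(y;[d]_{<\delta})$ is a $\tau_\U$-neighborhood of $y$, and conclude by the triangle inequality that $B_d(y;\delta)\subset B_d(x;\e)$. No differences worth noting.
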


\begin{proof} Given a point $x\in X$ and a positive real number $\e>0$, we need to prove that the $\e$-ball $B_d(x,\e)$ is open in $X$. We need to check that each point $y\in B_d(x,\e)$ is an interior point of the ball $B_d(x,\e)$. Since the pseudo-quasimetric $d$ is $\U$-uniform, for the positive real number $\delta=\e-d(x,y)$ the ball $B_d(y,\delta)$ is a neighborhood of $y$ in $X$. The triangle inequality implies that $B_d(y,\delta)\subset B_d(x,\e)$, which means that $y$ is an interior point of $B_d(x,\e)$.
\end{proof}

\subsection{Rotund quasi-uniform spaces}

In this section we discuss rotund quasi-uniform spaces and some their modifications.

A base $\mathcal B$ of a quasi-uniformity $\U$ on a topological space $X$ is called
\begin{itemize}
\item {\em point-rotund}  $\overline{B(x;V)}\subset\overline{B(x;VU)}^\circ$ for any point $x\in X$ and entourages $V,U\in\mathcal B$;
\item {\em set-rotund} if $\overline{A}\subset \overline{B(A;U)}^\circ$ for any subset $A\subset X$ and entourage $U\in\mathcal B$;
\item {\em $\Delta$-rotund\/} if $B(\overline{B(x,V)};U)\subset \overline{B(x;VWU)}$ for any point $x\in X$ and entourages $U,V,W\in\mathcal B$;
\item {\em rotund\/} if $B(\overline{A};U)\subset \overline{B(A;WU)}$ for any subset $A\subset X$ and entourages $U,W\in\mathcal B$.
\end{itemize}
Here the closures and interiors are taken in the topology $\tau_\U$ generated by the quasi-uniformity $\mathcal U$.

For any base $\mathcal B\subset\U$ these notions relate as follows:
$$\xymatrix{\mbox{rotund}\ar@{=>}[r]\ar@{=>}[d]&\mbox{$\Delta$-rotund}\ar@{=>}[d]\\
\mbox{set-rotund}\ar@{=>}[r]&\mbox{point-rotund}.
}$$

A quasi-uniform space $(X,\U)$ is called {\em rotund} (resp. {\em point-rotund}, {\em set-rotund}, {\em $\Delta$-rotund}) if its quasi-uniformity $\U$ has a rotund (resp. point-rotund, set-rotund, $\Delta$-rotund) multiplicative base $\mathcal B\subset\U$.


\begin{proposition}\label{p:uT} Each uniform space $(X,\U)$ is rotund.
\end{proposition}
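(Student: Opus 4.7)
The plan is to show that the quasi-uniformity $\U$ itself serves as a rotund multiplicative base. That $\U$ is multiplicative and is (trivially) a base of itself is immediate from the axioms (U1)--(U3): for any $U, V \in \U$ one has $U \circ V \supset U$ (because $\Delta_X \subset V$), hence $U \circ V \in \U$ by (U3).

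The main step is verifying the rotundity inclusion $B(\overline{A}; U) \subset \overline{B(A; W \circ U)}$ for an arbitrary subset $A \subset X$ and entourages $U, W \in \U$. I would start with a point $x \in B(\overline{A}; U)$, witnessed by some $a \in \overline{A}$ with $(a, x) \in U$, and then exploit the symmetry $\U = \U^{-1}$ to bring $a$ close to an actual point of $A$: since $W^{-1} \in \U$, the set $B(a; W^{-1})$ is a neighborhood of $a$ in $\tau_\U$, so the condition $a \in \overline{A}$ provides a point $a' \in A \cap B(a; W^{-1})$, which means $(a, a') \in W^{-1}$, equivalently $(a', a) \in W$. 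Composing with $(a, x) \in U$ gives $(a', x) \in W \circ U$, so that $x$ already lies in $B(a'; W \circ U) \subset B(A; W \circ U)$---in fact establishing the stronger inclusion $B(\overline{A}; U) \subset B(A; W \circ U) \subset \overline{B(A; W \circ U)}$.

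There is no serious obstacle here; the whole argument turns on the single symmetry step $W \rightsquigarrow W^{-1}$, which is precisely what is unavailable in the quasi-uniform setting and explains why rotundity becomes a genuine issue in the later sections of the paper.
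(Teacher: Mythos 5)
Your proof is correct and follows essentially the same argument as the paper: the key step in both is to use $W^{-1}\in\U$ to find a point of $A$ that is $W$-close to the given point of $\overline{A}$, which yields the stronger inclusion $B(\overline{A};U)\subset B(A;W\circ U)\subset\overline{B(A;W\circ U)}$. The only cosmetic difference is that the paper takes the symmetric entourages $\{U\in\U:U=U^{-1}\}$ as its multiplicative base, whereas you take all of $\U$; both choices work.
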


\begin{proof} We claim that the multiplicative base $\mathcal B=\{U\in\U:U=U^{-1}\}$ of $\U$ is rotund. Given a subset $A\subset X$ and entourages $U,W\in \mathcal B$ we need to prove that $B(\overline{A};U)\subset \overline{B(A;WU)}$. Given any point $y\in B(\overline{A};U)$, we can find a point $z\in \overline{A}$ such that $y\in B(z;U)$.
It follows that $B(z;W^{-1})\cap A\ne\emptyset$ and hence $z\in B(A;W)$ and $y\in B(z;U)\subset B(B(A;W);U)=B(A;WU)\subset\overline{B(A;WU)}$.
\end{proof}

\subsection{Binary fractions} By $\IQ_2=\{\frac k{2^n}:k,n\in\IN,\;0<k<2^n\}$ we shall denote the set of binary fractions in the interval $(0,1)$ and by $\IQ_2^1=\IQ_2\cup\{1\}$ the set $\IQ_2$ enlarged by the unit.

Observe that each rational number $r\in \IQ_2$ can be uniquely written as the sum $r=\sum_{n=1}^{\infty}\frac{r_n}{2^n}$ for some binary sequence $\vec r=(r_n)_{n=1}^\infty\in\{0,1\}^{\IN}$ containing finitely many units and called
the {\em binary expansion} of $r$. Since $r>0$, the number $l_r=\max\{n\in\IN:r_n=1\}$ is well-defined. So, $r=\sum_{i=1}^{l_r}\frac{r_i}{2^i}$.
The binary expansion $\vec {\,r}=(r_n)_{n=1}^\infty\in\{0,1\}^\IN$ is uniquely determined by the finite set $\vec{\;r}^{-1}(1)=\{n\in\IN:r_n=1\}$ which can be enumerated as $\vec{\;r}^{-1}(1)=\{{\upa}r_1,\dots,{\upa}r_\ell\}$ for some increasing number sequence ${\upa r}_1<{\upa r}_2<\dots<{\upa r}_\ell$ of length $\ell=|\vec{\;r}^{-1}(1)|$. The $\ell$-tuple $({\upa r}_1,\dots,{\upa r}_\ell)$ will be denoted by $\upa r$ and called the {\em sequential expansion} of $r$. By $|\upa r|$ we shall denote the length $\ell$ of the sequential expansion $\upa r=({\upa r}_1,\dots,{\upa r}_\ell)$.
The rational number $r$ can be recovered from its sequential expansion $\upa r$ by the formula $r=\sum_{i=1}^\ell2^{-{\upa}r_i}$.

For two number sequences $a=(a_1,\dots,a_n)$ and $b=(b_1,\dots,b_m)$ by $a^\frown b=(a_1,\dots,a_n,b_1,\dots,b_m)$ we denote their {\em concatenation}. So, $\upa r=({\upa r}_1)^\frown ({\upa r}_2,\dots,{\upa r}_n)$ for any $r\in\IQ_2$.

\section{Right-continuous quasi-pseudometrics on rotund quasi-uniform spaces}\label{s2}

In this section we prove the principal (and technically the most difficult) result of this paper.

\begin{theorem}\label{main} Let $\U$ be a quasi-uniformity on a topological space $X$ and $\mathcal B$ be a multiplicative base for $\U$.
For any entourage $U\in\U$ there is an indexed family of entourages $\{V_q\}_{q\in\IQ^1_2}\subset\mathcal B$ such that
\begin{enumerate}
\item $V_q\circ V_r\subset V_{q+r}\subset U\subset V_1=X\times X$ for any rational numbers $q,r\in\IQ_2$ with $q+r<1$.
\item If the base $\mathcal B$ is point-rotund, then $\overline{V}_r\subset\overline{V}_q^\circ$ for any rational numbers $r,q\in\IQ_2$ with $r<q$.
\item If the base $\mathcal B$ is set-rotund, then $B(\overline{A},V_r)\subset \overline{B(A,V_q)}^\circ$ for any non-empty subset $A\subset X$ and rational numbers $r,q\in\IQ_2$ with $r<q$.
\item If the base $\mathcal B$ is $\Delta$-rotund, then
$\overline{V_p\kern-2pt}^\circ\circ\overline{V_q\kern-2pt}^\circ\subset\overline{V_r\kern-2pt}^\circ\subset \overline{U}^\circ$ for any rational numbers $p,q,r\in\IQ_2$ with $p+q<r$.
\end{enumerate}

The premetric $$d:X\times X\to[0,1],\;\;d:(x,y)\mapsto\inf\{r\in\IQ_2^1:(x,y)\in V_r\},$$ and its modifications $\overline{d}$ and $\overline{d}^\circ$
have the following properties:
\begin{enumerate}
\item[(5)] $d$ is a $\U$-uniform quasi-pseudometric with open balls such that $[d\/]_{<1}\subset U$;
\item[(6)] $\overline{d}\le\overline{d}^\circ\le d$;
\item[(7)] $\overline{d}$ is a $\U$-uniform premetric with closed balls such that $[\overline{d}]_{<1}\subset\overline{U}$ and\newline $\overline{d}(x,y)=\inf\{r\in\IQ_2^1:(x,y)\in \overline{V_r\kern-1pt}\,\}$ for every $x,y\in X$;
\item[(8)] $\overline{d}^\circ$ is a $\U$-uniform premetric with open balls such that $[\overline{d}^\circ]_{<1}\subset\overline{U}^\circ$ and\newline $\overline{d}^\circ(x,y)=\inf\{r\in\IQ_2^1:(x,y)\in \overline{V_r\kern-2pt}^\circ\}$  for every $x,y\in X$;
\smallskip
\item[(9)] If the base $\mathcal B$ is point-rotund, then the premetric $\overline{d}=\overline{d}^\circ$ is right-continuous.
\item[(10)] If the base $\mathcal B$ is set-rotund, then the premetric $d$ is $\overline{\dist}$-continuous and the premetric  $\overline{d}=\overline{d}^\circ$ is right-continuous and $\overline{\dist}$-continuous.
\item[(11)] If the base $\mathcal B$ is $\Delta$-rotund, then $\overline{d}=\overline{d}^\circ$ is a  quasi-pseudometric on $X$.
\item[(12)] If $B^{-1}=B$ for any $B\in\mathcal B$, then $d=\overline{d}^\circ=\overline{d}$ is a continuous pseudometric on $X$.
\end{enumerate}
\end{theorem}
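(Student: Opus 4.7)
The plan is to build the indexed family $\{V_q\}_{q\in\IQ_2^1}$ as balanced products along the sequential expansions of binary fractions. First I would recursively choose a sequence $(U_n)_{n\in\IN}\subset\mathcal B$ so that $U_1\subset U$ and $U_{n+1}\circ U_{n+1}\circ U_{n+1}\subset U_n$ for every $n$; the axioms (U1), (U2) and the multiplicativity of $\mathcal B$ make this possible. Under the additional rotundness hypotheses I would further arrange, at each step of the recursion, that $\overline{B(x;U_{n+1})}\subset\overline{B(x;U_n)}^\circ$ for every $x\in X$ (point-rotund case), $\overline{B(A;U_{n+1})}\subset\overline{B(A;U_n)}^\circ$ for every $A\subset X$ (set-rotund case), and that $\overline{U_{n+1}\kern-1pt}^\circ\circ\overline{U_{n+1}\kern-1pt}^\circ\circ\overline{U_{n+1}\kern-1pt}^\circ\subset\overline{U_n\kern-1pt}^\circ$ in the $\Delta$-rotund case. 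Then for $q\in\IQ_2$ with sequential expansion $\upa q=({\upa q}_1,\dots,{\upa q}_\ell)$ I would set $V_q=\Pi(U_{{\upa q}_1},\dots,U_{{\upa q}_\ell})$ and $V_1=X\times X$; note in particular that $V_{2^{-n}}=U_n$.

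The technical heart of the argument is property (1), $V_q\circ V_r\subset V_{q+r}$ for $q+r<1$, which I would prove by induction on $|\upa q|+|\upa r|$, splitting according to whether the minimal entries ${\upa q}_1$ and ${\upa r}_1$ differ or coincide. If ${\upa q}_1<{\upa r}_1$ (and symmetrically in the opposite case), then $q+r$ has sequential expansion beginning with ${\upa q}_1$, and the associativity lemma of Subsection~1.5 combined with the inductive hypothesis reduces $V_q\circ V_r$ to a composition that slots directly into the recursive definition of $\Pi$, using only that each $U_n$ contains the diagonal. When ${\upa q}_1={\upa r}_1=k$ a binary carry appears, and the triple refinement $U_{k+1}\circ U_{k+1}\circ U_{k+1}\subset U_k$ is precisely what absorbs three consecutive copies of $U_{k}$ into one copy of $U_{k-1}$, after which the inductive hypothesis finishes the step and, if necessary, propagates further carries. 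The same bookkeeping, run on the closure-of-interior entourages $\overline{V_p\kern-1pt}^\circ$, yields (4) under $\Delta$-rotundness. Properties (2) and (3) follow by a parallel induction on $|\upa q|$, each step invoking the point-rotund (respectively set-rotund) inclusion built into the sequence $(U_n)$ and then chaining these via the balanced product.

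With (1)--(4) in hand, I would define $d(x,y)=\inf\{r\in\IQ_2^1:(x,y)\in V_r\}$. Axiom (M1) is immediate since every $V_q$ contains $\Delta_X$, and the triangle inequality (M4) follows from (1) via the standard infimum argument over binary fractions, so $d$ is a quasi-pseudometric; $\U$-uniformity and openness of balls (item (5)) come from $[d]_{<\e}\supset V_q$ for any $q\in\IQ_2$ with $q<\e$ together with Proposition~\ref{p1.3}, while (6) is immediate. For (7) and (8) I would verify the formulas $\overline{d}(x,y)=\inf\{r\in\IQ_2^1:(x,y)\in\overline{V_r\kern-1pt}\}$ and $\overline{d}^\circ(x,y)=\inf\{r\in\IQ_2^1:(x,y)\in\overline{V_r\kern-1pt}^\circ\}$ from the identities $\overline{B_d(x,\e)}=\bigcup_{s<\e}\overline{B(x;V_s)}$ and its interior-of-closure analogue, whence $\U$-uniformity of $\overline d$ and $\overline{d}^\circ$ follows from (2). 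Items (9) and (10) then combine (2) (respectively~(3)) with Corollaries~\ref{c1.3n} and~\ref{c1.6n}; (11) is a triangle-like estimate on the balls of $\overline{d}^\circ$ coming from (4); and (12) is obtained by noting that $B^{-1}=B$ for all $B\in\mathcal B$ forces $V_q^{-1}=V_q$, so $d=\overline{d}=\overline{d}^\circ$ is a continuous pseudometric. The principal obstacle throughout is property (1), which is exactly what motivates the detour through balanced products and their associativity lemma.
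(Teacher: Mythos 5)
Your overall architecture is the same as the paper's: choose a refining sequence $(U_n)_{n\in\IN}\subset\mathcal B$, set $V_q=\Pi(U_{{\upa q}_1},\dots,U_{{\upa q}_\ell})$ via the balanced product, prove $V_q\circ V_r\subset V_{q+r}$ by induction on $|{\upa q}|+|{\upa r}|$ with a case split on the leading indices, and then pass to $d$, $\overline{d}$, $\overline{d}^\circ$ exactly as in the paper. However, your refinement condition $U_{n+1}\circ U_{n+1}\circ U_{n+1}\subset U_n$ is too weak for the step you gloss over, namely the full-carry case ${\upa z}_1<\min\{{\upa x}_1,{\upa y}_1\}$ (e.g.\ $x=y=7/16$, $z=7/8$). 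There the only available bound is $V_x\subset U_{{\upa x}_1}^3$ and $V_y\subset U_{{\upa y}_1}^3$ (itself an inductive fact about balanced products), so $V_x\circ V_y\subset U_{{\upa z}_1+1}^6$, and one must absorb \emph{six} copies of $U_{{\upa z}_1+1}$ into $U_{{\upa z}_1}$, not three; the cube condition only yields $U_{{\upa z}_1+1}^6\subset U_{{\upa z}_1}^2$, which need not lie in $U_{{\upa z}_1}$ for non-symmetric entourages. Concretely, your inductive step reduces to showing $U_2\circ U_3\circ U_2\circ U_3\circ U_2\subset U_1$, which does not follow from $U_2^3\subset U_1$ and $U_3^3\subset U_2$, and no regrouping through intermediate $V_w$'s repairs it. This is precisely why the paper demands $U_{n+1}^6\subset U_n$, and for the same reason it demands $U_1^3\subset U$ rather than $U_1\subset U$: with your choice, $V_{3/4}=U_2\circ U_1\circ U_2$ need not be contained in $U$, so already the inclusion $V_{q+r}\subset U$ in condition (1) fails.

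A second gap concerns (2)--(4): you propose to arrange rotundness-type inclusions only between consecutive terms $U_{n+1}$ and $U_n$ and then to chain them through the balanced product by induction on $|{\upa q}|$. Closures do not commute with composition of entourages, so inclusions of the form $\overline{B(x;U_{n+1})}\subset\overline{B(x;U_n)}^\circ$ cannot be concatenated into $\overline{B(x;V_r)}\subset\overline{B(x;V_q)}^\circ$ for composite $V_r,V_q$. What is actually needed (and what the definitions of point-/set-/$\Delta$-rotundness provide) is the rotundness of the base applied to the \emph{composite} entourage $V_r$, which lies in $\mathcal B$ by multiplicativity, paired with a single $U_n$: choosing $n$ with $r+2^{-n}<q$, one gets $\overline{B}(x;V_r)\subset\overline{B}^\circ(x;V_rU_n)=\overline{B}^\circ(x;V_rV_{2^{-n}})\subset\overline{B}^\circ(x;V_q)$ in one stroke, with no induction. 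Your treatment of (5)--(12) matches the paper's and is fine once (1)--(4) are secured.
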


\begin{proof} Given any entourage $U\in\U$, choose a sequence of entourages $(U_n)_{n=1}^\infty\in\mathcal B^\IN$ such that $U_1^3\subset U$ and $U_{n+1}^6\subset U_{n}$ for all $n\ge1$.

Put $V_1=X\times X$ and for every rational number $r\in\IQ_2$ with sequential expansion $\upa r=({\upa r}_1,\dots,{\upa r}_\ell)$ consider the sequence of entourages
$$U_{{\upa r}}:=(U_{{\upa r}_1},\dots,U_{{\upa r}_\ell})\in\mathcal B^\ell$$and their balanced product
$$V_r:=\Pi(U_{\upa r})=\Pi_\ell(U_{{\upa}r_1},\dots,U_{{\upa}r_\ell}),$$
which belongs to the family $\mathcal B$ by the multiplicativity of $\mathcal B$.
In Claims~\ref{cl2.3}--\ref{cl2.5} we shall prove that the family $(V_r)_{r\in\IQ_2^1}$ satisfies the conditions (1)--(4) of the theorem.

\begin{claim}\label{cl2.2} For any rational number $x\in\IQ_2$ with sequential expansion $\upa x$ we get
$V_x\subset U_{{\upa x}_1}^3\subset U.$
\end{claim}

\begin{proof} This claim will be proved by induction on the length $\ell=|\upa x|$ of the sequence $\upa x=({\upa x}_1,\dots,{\upa x}_\ell)$. If $\ell=1$, then $V_x=U_{{\upa x}_1}\subset U_{{\upa x}_1}^3$. Assume that for some integer number $\ell>1$ the inclusion $V_x\subset U_{{\upa x}_1}^3$ has been proved for all rational numbers $x\in\IQ_2$ with $|\upa x|<\ell$. Take any rational number $x\in\IQ_2$ with $|\upa x|=\ell$. Consider the rational number $\tilde x=x-2^{-{\upa x}_1}$ and observe that the sequence ${\upa \tilde x}=({\upa x}_2,\dots,{\upa x}_\ell)$ has length $|\upa\tilde x|=\ell-1$. Then by the induction hypothesis, $V_{\tilde x}\subset U_{{\upa\tilde x}_1}^3=U_{{\upa x}_2}^3$. Since $\upa x=({\upa x}_1)^\frown \upa \tilde x$, we get $$V_x=\Pi(U_{\upa x})=\Pi(U_{{\upa x}_1},U_{\upa\tilde x})=\Pi(U_{\upa\tilde x})U_{{\upa x}_1}\Pi(U_{\upa\tilde x})=V_{\tilde x}U_{{\upa x}_1}V_{\tilde x}\subset U_{{\upa x}_2}^3U_{{\upa x}_1}U_{{\upa x}_2}^3\subset U_{{\upa x}_2-1}U_{{\upa x}_1}U_{{\upa x}_2-1}\subset U_{{\upa x}_1}^3.$$
\end{proof}

\begin{claim}\label{cl2.3}  For any rational numbers $x,y\in\IQ_2$ with $x+y<1$ we get $V_xV_y\subset V_{x+y}$.
\end{claim}

\begin{proof} Let $\upa x$, $\upa y$ and $\upa z$ be the sequential expansions of the rational numbers $x$, $y$, and $z=x+y$, respectively. The inclusion $V_x\circ V_y\subset V_{x+y}$ will be proved by induction on $n=|\upa x|+|\upa y|\ge 2$. To start the induction, assume that $n=2$. Then $\upa x=({\upa x}_1)$ and $\upa y=(\upa y_1)$ and either ${\upa x}_1=\upa y_1$ or ${\upa x}_1<\upa y_1$ or $\upa y_1<{\upa x}_1$. If ${\upa x}_1=\upa y_1$, then $\upa z=({\upa x}_1-1)$. In this case
$$V_xV_y=U_{{\upa x}_1}U_{\upa y_1}=U_{{\upa x}_1} U_{{\upa x}_1}\subset U_{{\upa x}_1-1}=U_{\upa z_1}=V_z.$$
If ${\upa x}_1<\upa y_1$, then $\upa z=({\upa x}_1,\upa y_1)$ and hence
$$V_xV_y=U_{{\upa x}_1}U_{\upa y_1}\subset U_{\upa y_1}U_{{\upa x}_1}U_{\upa y_1}=\Pi(U_{{\upa x}_1},U_{\upa y_1})=\Pi(U_{\upa z_1},U_{\upa z_2})=V_z.$$
If $\upa y_1<{\upa x}_1$, then $\upa z=(\upa y_1,{\upa x}_1)$ and hence
$$V_xV_y=U_{{\upa x}_1}U_{\upa y_1}\subset U_{{\upa x}_1}U_{\upa y_1}U_{{\upa x}_1}=\Pi(U_{\upa y_1},U_{{\upa x}_1})=\Pi(U_{\upa z_1},U_{\upa z_2})=V_z.$$

Now assume that for some $n\ge 2$ and all sequences $x,y\in\IQ_2$ with $z=x+y<1$ and $|\upa x|+|\upa y|\le n$ the inclusion $V_xV_y\subset V_z$ has been established. Take any sequences $x,y\in\IQ_2$ with $z=x+y<1$ and $|\upa x|+|\upa y|=n+1$. Let $\upa x$, $\upa y$, and $\upa z$ be the sequential expansions of the rational numbers $x$, $y$, and $z$, respectively.
Two cases are possible.

1: $\upa z_1<\min\{{\upa x}_1,\upa y_1\}$. In this case Claim~\ref{cl2.2} implies
$$V_xV_y\subset U_{{\upa x}_1}^3U_{\upa y_1}^3\subset U_{\upa z_1+1}^3U_{\upa z_1+1}^3=U_{\upa z_1+1}^6\subset U_{\upa z_1}\subset V_z.$$

2: ${\upa z}_1=\min\{{\upa x}_1,{\upa y}_1\}$. In this case $\upa x_1\ne\upa y_1$ and hence $\upa z_1=\upa x_1<\upa y_1$ or $\upa z_1=\upa y_1<\upa x_1$.

If $\upa z_1=\upa x_1<\upa y_1$, then consider the rational numbers $\tilde x=x-2^{-\upa x_1}$ and $\tilde z=z-2^{-\upa z_1}$. Then $\upa x=(\upa x_1)^\frown \upa \tilde x$ and $\upa z=(\upa z_1)^\frown \upa\tilde z$. Since $|\upa \tilde x|+|\upa y|=(n+1)-1=n$, by the inductive assumption, $V_{\tilde x}\subset V_{\tilde x}V_y\subset V_{\tilde x+y}=V_{\tilde z}.$ Then
$$V_xV_y=V_{\tilde x}U_{\upa x_1}V_{\tilde x}V_y\subset V_{\tilde z}U_{{\upa\tilde z}_1}V_{\tilde z}=V_z.$$

If $\upa z_1=\upa y_1<\upa x_1$, then consider the rational numbers $\tilde y=y-2^{-\upa y_1}$ and $\tilde z=z-2^{-\upa z_1}$. It follows that $\upa y=(\upa y_1)^\frown \upa \tilde y$ and $\upa z=\upa z_1^\frown \upa\tilde z$. Since $|\upa x|+|\upa \tilde y|=n$,  the inductive assumption guarantees that $V_{\tilde y}\subset V_xV_{\tilde y}\subset V_{x+\tilde y}=V_{\tilde z}$ and thus
$$V_xV_y=V_xV_{\tilde y}U_{\upa y_1}V_{\tilde y}\subset V_{\tilde z}U_{{\upa\tilde z}_1}V_{\tilde z}=V_z.$$
\end{proof}

\begin{claim}\label{cl2.4}  If the base $\mathcal B$ is point-rotund, then $\overline{B}(x,V_r)\subset \overline{B}^\circ(x,V_q)$ for any point $x\in X$ and rational numbers $r,q\in\IQ_2$ with $r<q$.
\end{claim}

\begin{proof}
Choose any number $n\in\IN$ such that  $r+2^{-n}<q$. Since the base $\mathcal B$ is point-rotund, for the entourages $V_r,U_n\in\mathcal B$ we get
$$\overline{B}(x;V_r)\subset
\overline{B}^\circ(x;V_rU_n)=\overline{B}^\circ(x;V_rV_{2^{-n}})\subset \overline{B}^\circ(x;V_{r+2^{-n}})\subset \overline{B}^\circ(x;V_q).
$$
\end{proof}

By analogy we can prove:

\begin{claim}\label{cl2.5}  If the base $\mathcal B$ is set-rotund, then $\overline{B}(A,V_r)\subset \overline{B}^\circ(A,V_q)$ for any subset $A\subset X$ and rational numbers $r,q\in\IQ_2$ with $r<q$.
\end{claim}

\begin{claim}\label{cl2.6}  If the family $\mathcal B$ is $\Delta$-rotund, then
$\overline{V}_p^\circ\circ\overline{V}_q^\circ\subset\overline{V}_p\circ\overline{V}_q\subset \overline{V}^\circ_{r}\subset \overline{U}^\circ$ for any rational numbers $p,q,r\in\IQ_2$ with $p+q<r$.
\end{claim}

\begin{proof} Given any points $x\in X$, $y\in  \overline{B}(x;V_p)$ and $z\in  \overline{B}(y;V_q)$, we need to prove $z\in  \overline{B}^\circ(x;V_{r})$. Choose any number $n\in\IN$ such that $p+q+2^{-n}<r$. Taking into account that $\mathcal B$ is $\Delta$-rotund and applying Claims~\ref{cl2.3} and \ref{cl2.4}, we conclude that
$$
z\in \overline{B}(y;V_q)\subset
\overline{B(\overline{B}(x;V_p);V_q)}\subset \overline{B(x;V_pU_nV_q)}=\overline{B}(x;V_pV_{2^{-n}}V_q)\subset
\overline{B}(x;V_{p+2^{-n}+q})\subset
\overline{B}^\circ(x;V_r).
$$
\end{proof}

Now consider the premetric $$d:X\times X\to[0,1],\;\;d(x,y)=\inf\{r\in\IQ_2^1:(x,y)\in V_r\}$$ and its modifications $\overline{d}$ and $\overline{d}^\circ$.
In the following claims we shall prove that they satisfy the conditions (5)--(12) of the theorem. The condition (6) follows from the definition of the modifications $\overline{d}$ and $\overline{d}^\circ$.

\begin{claim}\label{cl2.7}  The premetric $d$ is a $\U$-uniform quasi-pseudometric with open balls, and $[d]_{<1}\subset U$.
\end{claim}

\begin{proof} To prove that $d$ is $\U$-uniform, we need to check that for every $\e>0$ the set $[d]_{<\e}=\{(x,y)\in X\times X:d(x,y)<\e\}$ belongs to the quasi-uniformity $\U$. Take any positive integer $m\in\IN$ with $2^{-m}<\e$ and observe that for every $x\in X$ and $y\in B(x,V_{2^{-m}})$ we get $d(x,y)\le 2^{-m}<\e$ and hence
$\U\ni U_m=V_{2^{-m}}\subset \{(x,y)\in X\times X:d(x,y)<\e\}=[d]_{<\e}$, which implies $[d]_{<\e}\in\U$.
\smallskip

To show that $d$ is a quasi-pseudometric, it suffices to show that $d(x,z)\le d(x,y)+d(y,z)+2\e$ for every points $x,y,z\in X$ and every $\e>0$. This inequality trivially holds if $d(x,y)+d(y,z)+2\e\ge 1$. So, we assume that $d(x,y)+d(y,z)+2\e<1$. Choose any rational numbers $r,q\in\IQ_2$ such that $d(x,y)<r<d(x,y)+\e$ and $d(y,z)<q<d(y,z)+\e$. By the definition of the premetric $d$ we get
$(x,y)\in V_r$ and $(y,z)\in V_q$ and hence $(x,z)\in V_rV_q\subset V_{r+q}$ according to Claim~\ref{cl2.3}. It follows that $d(x,z)\le r+q<d(x,y)+d(y,z)+2\e$.

By Proposition~\ref{p1.3}, the $\U$-uniform quasi-pseudometric $d$ has open balls.
Finally, observe that $[d]_{<1}\subset \bigcup_{r\in\IQ_2}V_r\subset U$.
\end{proof}

\begin{claim}\label{cl2.8}
$\overline{d}$ is a $\U$-uniform premetric with closed balls such that $[\overline{d}]_{<1}\subset\overline{U}$ and  $$\overline{d}(x,y)=\inf\{r\in\IQ_2^1:(x,y)\in \overline{V_r\kern-1pt}\,\}\mbox{ \  for every $x,y\in X$}.$$
\end{claim}

\begin{proof} The inequality $\overline{d}\le d$ implies that $[d]_{<\e}\subset[\overline{d}]_{<\e}$ for every $\e>0$. By Claim~\ref{cl2.7}, $[d]_{<\e}\in\U$ and hence $[\overline{d}]_{<\e}\in\U$, which means that $\overline{d}$ is $\U$-uniform. By Proposition~\ref{p1.4n}(1), $\overline{d}$ is a premetric with closed balls. The inclusion $[\overline{d}]_{<1}\subset\overline{U}$ follows from the inclusion $[d]_{<1}\subset U$ and the definition of the regularization $\overline{d}$.

Finally, we prove that  $\overline{d}(x,y)=\inf\{r\in\IQ_2^1:y\in \overline{B(x,V_r)}\}$ for any points $x,y\in X$. Let $\rho(x,y)=\inf\{r\in \IQ_2^1:y\in\overline{B(x,V_r)}\}$.

First we show that $\overline{d}(x,y)\ge \rho(x,y)$. Assuming that $\overline{d}(x,y)<\rho(x,y)$, we can find a rational number $r\in\IQ_2$ such that $r<\rho(x,y)$ and $y\in\overline{B_d(x,r)}$. Since $B_d(x,r)\subset B(x;V_r)$, we conclude that $y\in\overline{B_d(x,r)}\subset \overline{B(x;V_r)}$ and hence $\rho(x,y)\le r<\rho(x,y)$, which is a desired contradiction.

To show that $\overline{d}(x,y)\le \rho(x,y)$, it suffices to check that $\overline{d}(x,y)\le \delta$ for any real number $1\ge \delta>\rho(x,y)$. By the definition of $\rho(x,y)$, there is a rational number $r\in\IQ_2$ such that $r<\delta$ and $y\in \overline{B(x;V_r)}$. The definition of the quasi-pseudometric $d$ guarantees that $B(x;V_r)\subset B_d(x,\delta)$. Then $y\in \overline{B(x;V_r)}\subset\overline{B_d(x,\delta)}$ and hence $\overline{d}(x,y)\le\delta$.
\end{proof}

By analogy we can prove:

\begin{claim}\label{cl2.9}
 $\overline{d}^\circ$ is a $\U$-uniform premetric with open balls such that $[\overline{d}^\circ]_{<1}\subset\overline{U}^\circ$ and
 $$\overline{d}^\circ(x,y)=\inf\{r\in\IQ_2^1:(x,y)\in \overline{V_r\kern-2pt}^\circ\}\mbox{ \  for every $x,y\in X$}.
 $$
\end{claim}

\begin{claim}\label{cl2.10}  If the base $\mathcal B$ is point-rotund,  then the premetric $\overline{d}=\overline{d}^\circ$ is right-continuous.
\end{claim}

\begin{proof} Claims~\ref{cl2.4}, \ref{cl2.8} and \ref{cl2.9} imply that $\overline{d}=\overline{d}^\circ$. By Corollary~\ref{c1.6n}, the premetric $\overline{d}=\overline{d}^\circ$ is right-continuous.
\end{proof}

\begin{claim}\label{cl2.11} If the base $\mathcal B$ is set-rotund, then the premetrics $d$ and $\overline{d}=\overline{d}^\circ$ are $\overline{\dist}$-continuous.
\end{claim}

\begin{proof} Assuming that $\mathcal B$ is set-rotund, we shall show that $\overline{d}_A=\overline{d}^\circ_A$ for any non-empty subset $A\subset X$.
Since $\overline{d}_A\le \overline{d}^\circ_A$, it suffices to check that $\overline{d}_A(x)\ge \overline{d}^\circ_A(x)$ for every $x\in X$. This inequality will follow as soon as we check that
$\overline{d}^\circ_A(x)< \overline{d}_A(x)+\e$ for every $\e>0$. This inequality is trivial if
$\overline{d}_A(x)+\e\ge 1$. So we assume that  $\overline{d}_A(x)+\e<1$. Choose  two rational numbers $r,q\in\IQ_2$ such that $\overline{d}_A(x)<r<r+2q<\overline{d}_A(x)+\e$. The definition of $\overline{d}_A(x)$ and Claim~\ref{cl2.3} imply that
$$
x\in \overline{B_{d}(A,r)}\subset
\overline{B(A;V_r)}\subset \overline{B(B(A;V_r);V_q)}^\circ=
\overline{B(A;V_rV_q)}^\circ\subset \overline{B(A;V_{r+q})}^\circ\subset \overline{B_d(A,r+2q)}^\circ .$$
Then the definition  of the distance function $\overline{d}^\circ_A$ yields the desired inequality
$\overline{d}^\circ_A(x)\le r+2q<\overline{d}_A(x)+\e$. So, $\overline{d}_A=\overline{d}^\circ_A$.
By Proposition~\ref{p1.2n}, the function $\overline{d}_A=\overline{d}^\circ_A:X\to[0,1]$ is continuous. This means that the premetric $d$ is $\overline{\dist}$-continuous. By Corollary~\ref{c1.6n}, the premetric $\overline{d}=\overline{d}^\circ$ is $\overline{\dist}$-continuous.
\end{proof}

\begin{claim}\label{cl2.12}  If the base $\mathcal B$ is $\Delta$-rotund, then
$\overline{d}=\overline{d}^\circ$ is a quasi-pseudometric.
\end{claim}

\begin{proof} Assuming that the base $\mathcal B$ is $\Delta$-rotund, we conclude that $\mathcal B$ is point-rotund. By Claim~\ref{cl2.10}, $\overline{d}=\overline{d}^\circ$.

To show that $\overline{d}=\overline{d}^\circ$ is a quasi-pseudometric, it suffices to show that $\overline{d}(x,z)\le \overline{d}(x,y)+\overline{d}(y,z)+\e$ for every points $x,y,z\in X$ and every $\e>0$. This inequality trivially holds if $\overline{d}(x,y)+\overline{d}(y,z)+\e\ge 1$. So, we assume that $\overline{d}(x,y)+\overline{d}(y,z)+\e<1$. Choose any rational numbers $p,q,r\in\IQ_2$ such that $\overline{d}(x,y)<p$ and $\overline{d}(y,z)<q$ and $p+q<r<\overline{d}(x,y)+\overline{d}(y,z)+\e$. By Claim~\ref{cl2.8}, $(x,y)\in \overline{V}_p$ and $(y,z)\in \overline{V}_q$ and hence $(x,z)\in \overline{V}_p\overline{V}_q\subset \overline{V}_{r}$ according to Claim~\ref{cl2.6}. By Claim~\ref{cl2.8}, $\overline{d}(x,z)\le r<\overline{d}(x,y)+\overline{d}(y,z)+\e$.
\end{proof}

\begin{claim}\label{cl2.13}  If $B^{-1}=B$ for all $B\in\mathcal B$, then $d=\overline{d}^\circ=\overline{d}$ is a continuous pseudometric on $X$.
\end{claim}

\begin{proof} It follows that $V_r^{-1}=V_r$ for all $r\in\IQ_2$ and hence $d(x,y)=d(y,x)$ for all $x,y\in X$ by the definition of the premetric $d$. Being a symmetric quasi-pseudometric, the premetric $d$ is a pseudometric. Since the pseudometric $d$ has open balls, it is continuous. For every rational numbers $p,q\in\IQ_2$ with $p<q$ we get
$$\overline{B}(x;V_p)\subset \overline{B}(x;[d]_{\le p})=B(x;[d]_{\le p})\subset B(x;[d]_{<q})\subset B(x;V_q),$$which implies the inequality $d\le\overline{d}$. Since $\overline{d}\le \overline{d}^\circ\le d$, this inequality yields the equality $\overline{d}=\overline{d}^\circ=d$.
\end{proof}
\end{proof}

Theorem~\ref{main} has the following immediate corollary.

\begin{corollary}\label{c:u-main} For every $\Delta$-rotund (quasi-)uniform space $(X,\U)$ and every entourage $U\in\U$ there is a right-continuous $\U$-uniform (quasi-)pseudometric $p:X\times X\to[0,1]$ such that $B_p(x,1)\subset \overline{B}^\circ(x;U)$ for every $x\in X$.
\end{corollary}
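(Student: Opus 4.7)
The plan is to show that the corollary is essentially just a repackaging of the relevant clauses of Theorem~\ref{main}. Starting from a $\Delta$-rotund (quasi-)uniform space $(X,\U)$, I will first fix a $\Delta$-rotund multiplicative base $\mathcal B\subset\U$ witnessing the $\Delta$-rotundness; in the uniform case Proposition~\ref{p:uT} lets me take for $\mathcal B$ the symmetric multiplicative base $\{B\in\U:B=B^{-1}\}$, which is rotund and, \emph{a fortiori}, $\Delta$-rotund. Given the distinguished entourage $U\in\U$, I then apply Theorem~\ref{main} to $U$ and this $\mathcal B$, producing the premetric $d$ together with its modifications $\overline d$ and $\overline d^\circ$, and I declare $p:=\overline d^\circ$.

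Next I will verify the four required properties. Since $\mathcal B$ is $\Delta$-rotund it is in particular point-rotund, so Theorem~\ref{main}(9) gives $\overline d=\overline d^\circ$ together with the right-continuity of this common premetric; clauses (8) and (11) of the theorem then say that $p$ is a $\U$-uniform quasi-pseudometric on $X$ taking values in $[0,1]$. The key ball inclusion is read off from clause (8): the inclusion $[\overline d^\circ]_{<1}\subset\overline U^\circ$, together with the definition $\overline U^\circ=\bigcup_{x\in X}\{x\}\times\overline B^\circ(x;U)$, translates directly into $B_p(x,1)\subset\overline B^\circ(x;U)$ for every $x\in X$, which is what we need.

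To handle the parenthetical ``uniform $\Rightarrow$ pseudometric'' case, I will additionally invoke clause (12) of Theorem~\ref{main}. Because in the uniform setting the chosen base $\mathcal B$ satisfies $B^{-1}=B$ for every $B\in\mathcal B$, clause (12) upgrades the conclusion to the equality $d=\overline d^\circ=\overline d$ and states that this common function is a continuous pseudometric; by the observation recalled in the preliminaries, a continuous pseudometric is in particular right-continuous, so $p$ still satisfies all listed requirements while being promoted from a quasi-pseudometric to a genuine pseudometric. The ball inclusion is unchanged.

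There is no real obstacle here: all the substantive work — construction of the dyadic family $(V_q)$, verification of the triangle and rotundity estimates, and the passage to regularizations — is already absorbed into Theorem~\ref{main}. The only point that requires a bit of attention is keeping the parenthetical ``(quasi-)'' on the hypothesis and conclusion aligned, which is precisely why one splits into the two cases above and invokes Proposition~\ref{p:uT} in the uniform one.
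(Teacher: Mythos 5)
Your argument is correct and is exactly the derivation the paper has in mind: the paper states this corollary without proof as an ``immediate'' consequence of Theorem~\ref{main}, and your unpacking --- taking $p=\overline{d}^\circ$, using that $\Delta$-rotund implies point-rotund to get right-continuity from clause (9), the ball inclusion from $[\overline{d}^\circ]_{<1}\subset\overline{U}^\circ$ in clause (8), the triangle inequality from clause (11), and Proposition~\ref{p:uT} together with clause (12) for the symmetric/uniform case --- is precisely the intended reading. Nothing is missing.
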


\section{Metrizability theorems for quasi-uniform spaces}\label{s3}

In this section we shall apply Theorem~\ref{main} and prove some metrizability theorems for quasi-uniform spaces.

We say that a quasi-uniformity $\U$ on a set $X$ is generated by a family of premetrics $\mathcal D$ on $X$ if the family $$\{[d]_{<\e}:d\in\mathcal D,\;\e>0\}$$ is a subbase of the quasi-uniformity $\U$.

It is easy to see that each family of pseudometrics $\mathcal D$ on a set $X$ generates a unique quasi-uniformity on $X$ (namely, that generated by the subbase $\{[d]_{<\e}:d\in\mathcal D,\;\e>0\}$).

Theorem~\ref{main} gives an alternative (and direct) proof of the following classical result (see  Kelley \cite[6.12--6.14]{Kelley}).

\begin{theorem}\label{t3.1} Each quasi-uniformity $\U$ is generated by a family $\mathcal D$ of quasi-pseudometrics, which has cardinality $|\mathcal D|\le\chi(\U)$.
\end{theorem}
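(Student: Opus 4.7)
The plan is to derive this classical theorem almost immediately from Theorem~\ref{main}, essentially by running the construction once per element of a fixed base. First I would fix a base $\mathcal B\subset\U$ with $|\mathcal B|=\chi(\U)$. The hypothesis of Theorem~\ref{main} requires a \emph{multiplicative} base, but this is not an obstacle: the quasi-uniformity $\U$ itself is trivially a multiplicative base for $\U$ (axioms (U2), (U3) and the fact that $U\circ V\supset\Delta_X$ place every composition into $\U$), so I will apply Theorem~\ref{main} with the multiplicative base taken to be all of $\U$. Note that I do \emph{not} need a multiplicative refinement of $\mathcal B$ of the same cardinality; the role of $\mathcal B$ is merely to index the family of quasi-pseudometrics to be constructed.

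Next, for each entourage $U\in\mathcal B$, Theorem~\ref{main}(5) produces a $\U$-uniform quasi-pseudometric $d_U:X\times X\to[0,1]$ satisfying $[d_U]_{<1}\subset U$. Setting
$$\mathcal D=\{d_U:U\in\mathcal B\},$$
I get $|\mathcal D|\le|\mathcal B|=\chi(\U)$, which is the cardinality bound required.

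It remains to verify that the subbase $\mathcal S=\{[d_U]_{<\e}:U\in\mathcal B,\;\e>0\}$ generates exactly the quasi-uniformity $\U$. For the inclusion $\mathcal S\subset\U$: each $d_U$ is $\U$-uniform, so $[d_U]_{<\e}\in\U$ for every $\e>0$; hence the quasi-uniformity generated by $\mathcal S$ is contained in $\U$. Conversely, given any $V\in\U$, choose $U\in\mathcal B$ with $U\subset V$; by Theorem~\ref{main}(5) we have $[d_U]_{<1}\subset U\subset V$, so $V$ lies in the quasi-uniformity generated by $\mathcal S$ by the upward-closure axiom (U3). Thus the quasi-uniformity generated by $\mathcal D$ equals $\U$.

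There is no real obstacle here: all of the technical difficulty has already been absorbed into Theorem~\ref{main}. The only small point worth double-checking is the legitimacy of applying that theorem without passing to a multiplicative refinement of $\mathcal B$, which is the reason I explicitly invoke $\U$ itself as the multiplicative base in the first paragraph.
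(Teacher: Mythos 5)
Your proposal is correct and follows essentially the same route as the paper: fix a base $\mathcal B$ with $|\mathcal B|=\chi(\U)$, apply Theorem~\ref{main}(5) to each $U\in\mathcal B$ to obtain a $\U$-uniform quasi-pseudometric $d_U$ with $[d_U]_{<1}\subset U$, and take $\mathcal D=\{d_U:U\in\mathcal B\}$. Your explicit remark that $\U$ itself serves as the multiplicative base required by Theorem~\ref{main} (so no multiplicative refinement of $\mathcal B$ is needed) and your verification that $\mathcal D$ generates exactly $\U$ are both correct details that the paper leaves implicit.
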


\begin{proof} Fix a base $\mathcal B$ of the quasi-uniformity $\U$ of cardinality $|\mathcal B|=\chi(\U)$. By Theorem~\ref{main}, for every entourage $U\in\mathcal B$ there exists a $\U$-uniform quasi-pseudometric $dU$ such that $[dU]_{<1}\subset U$. Then $\mathcal D=\{dU:U\in\mathcal B\}$ is a family of quasi-pseudometrics generating the quasi-uniformity $\U$ and having cardinality $|\mathcal D|\le |\mathcal B|=\chi(\U)$.
\end{proof}

\begin{corollary}\label{c3.2} A quasi-uniformity $\U$ is generated by a single quasi-pseudometric if and only if $\U$ has countable uniform weight $\chi(\U)$.
\end{corollary}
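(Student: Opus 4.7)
The plan is to prove both directions of the equivalence, with the backward (hard) direction being a standard aggregation of quasi-pseudometrics obtained from Theorem~\ref{main}.

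For the trivial direction, assume $\U$ is generated by a single quasi-pseudometric $d$. Then $d$ is $\U$-uniform, so $[d]_{<\e}\in\U$ for all $\e>0$, and conversely any entourage of the generated quasi-uniformity contains a finite intersection of sets of the form $[d]_{<\e}$, which equals $[d]_{<\min\e}$ by the triangle inequality. Thus $\{[d]_{<1/n}:n\in\IN\}$ is a countable base, giving $\chi(\U)\le\aleph_0$.

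For the substantial direction, fix a countable base $\{U_n\}_{n\in\IN}\subset\U$. Applying Theorem~\ref{main} to each $U_n$, I would obtain a $\U$-uniform quasi-pseudometric $d_n:X\times X\to[0,1]$ with $[d_n]_{<1}\subset U_n$. Next I would aggregate them into a single quasi-pseudometric by setting
$$d(x,y)=\sum_{n=1}^\infty 2^{-n}\min\{1,d_n(x,y)\}.$$
The series converges, and $d$ inherits conditions (M1) and (M4) from the $d_n$ since each $\min\{1,d_n\}$ is itself a quasi-pseudometric and countable positively-weighted sums of quasi-pseudometrics are quasi-pseudometrics.

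It remains to check that $d$ generates $\U$, which amounts to two containments. First, for $\U$-uniformity of $d$: given $\e>0$, fix $N$ with $\sum_{n>N}2^{-n}<\e/2$; then $d_n(x,y)<\e/2$ for all $n\le N$ implies $d(x,y)<\e/2+\e/2=\e$, hence $\bigcap_{n=1}^{N}[d_n]_{<\e/2}\subset [d]_{<\e}$, and this intersection lies in $\U$ by the $\U$-uniformity of each $d_n$ and axiom (U3). Second, to show every $U\in\U$ contains some $[d]_{<\e}$: it suffices to handle basic entourages $U_n$; from $2^{-n}\min\{1,d_n(x,y)\}\le d(x,y)$ we get that $d(x,y)<2^{-n}$ forces $d_n(x,y)<1$, hence $[d]_{<2^{-n}}\subset[d_n]_{<1}\subset U_n$. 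These two containments together establish that the quasi-pseudometric $d$ generates $\U$, completing the proof. The only mildly delicate step is the first of these containments, but it is the standard tail-estimate for weighted sums of quasi-pseudometrics.
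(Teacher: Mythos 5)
Your proof is correct and follows essentially the same route as the paper: pass to a countable base, apply Theorem~\ref{main} (equivalently Theorem~\ref{t3.1}) to get a countable generating family $\{d_n\}_{n\in\IN}$, and aggregate it into one quasi-pseudometric. The only difference is the aggregation formula --- the paper takes $d=\max_{n\in\IN}\min\{d_n,2^{-n}\}$ where you take the weighted sum $\sum_{n}2^{-n}\min\{1,d_n\}$ --- and your tail-estimate verification that this $d$ generates $\U$ is sound.
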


\begin{proof} If the quasi-uniformity $\U$ is generated by a quasi-pseudometric $d$, then the family $\{[d]_{<\frac1n}:n\in\IN\}$ is a countable base of the quasi-uniformity $\U$, which implies that $\chi(\U)$ is countable.

Assuming conversely that $\chi(\U)$ is countable and applying Theorem~\ref{t3.1}, we can find a countable family of quasi-pseudometrics $\mathcal D=\{d_n\}_{n\in\IN}$ generating the quasi-uniformity $\U$.
It is easy to check that the quasi-pseudometric $d=\max_{n\in\IN}\min\{d_n,2^{-n}\}$  generates the quasi-uniformity $\U$.
\end{proof}

Next, we deduce from Theorem~\ref{main} even more classical result whose history traces back to Aleksandrov, Urysohn \cite{AU}, Chittenden \cite{Chit}, Frink \cite{Frink}, Aronszajn \cite{Aron} and finally Weil \cite{Weil} (more historical information can be found in Kelley \cite[6.14]{Kelley} and  Engelking \cite[p.436]{Eng}).

\begin{theorem}\label{t3.3}  Each uniformity $\U$ is generated by a family $\mathcal D$ of (continuous) pseudometrics of cardinality $|\mathcal D|\le\chi(\U)$.
\end{theorem}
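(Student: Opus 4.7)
The plan is to follow the proof of Theorem~\ref{t3.1} step by step, but to exploit the symmetry of uniformities in order to obtain \emph{continuous pseudometrics} via clause~(12) of Theorem~\ref{main}. The key observation is that, by Proposition~\ref{p:uT}, every uniformity $\U$ admits a rotund multiplicative base $\mathcal B=\{U\in\U:U=U^{-1}\}$ all of whose members are symmetric. This is precisely the hypothesis of clause~(12), which asserts that when $B^{-1}=B$ for every $B\in\mathcal B$, the premetric $d$ produced by Theorem~\ref{main} is automatically a continuous pseudometric on $X$.

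Concretely, I would first fix a base $\mathcal B_0\subset\U$ of cardinality $|\mathcal B_0|=\chi(\U)$. For each $U\in\mathcal B_0$ I would then apply Theorem~\ref{main} to the symmetric multiplicative base $\mathcal B$ and the entourage $U$, obtaining a premetric $d_U$. Clause~(5) yields that $d_U$ is a $\U$-uniform quasi-pseudometric with $[d_U]_{<1}\subset U$, and clause~(12) upgrades it to a continuous pseudometric, because every balanced product $V_q$ produced by the construction is built from symmetric entourages of $\mathcal B$ and so is itself symmetric. Setting $\mathcal D=\{d_U:U\in\mathcal B_0\}$ then produces a family of continuous pseudometrics of cardinality $|\mathcal D|\le|\mathcal B_0|=\chi(\U)$.

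It remains only to verify that $\mathcal D$ generates $\U$. Because each $d_U$ is $\U$-uniform, every set $[d_U]_{<\e}$ lies in $\U$, so the uniformity $\U'$ generated by $\mathcal D$ is contained in $\U$. Conversely, for each $U\in\mathcal B_0$ the inclusion $[d_U]_{<1}\subset U$ together with axiom~(U3) forces $U\in\U'$; since $\mathcal B_0$ is a base of $\U$, it follows that $\U\subset\U'$, and hence $\U=\U'$. The only step requiring real care is confirming the hypothesis of clause~(12), which reduces to Proposition~\ref{p:uT}; once that is in hand, the remainder is a direct specialization of the argument used for Theorem~\ref{t3.1}.
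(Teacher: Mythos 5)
Your proof is correct and follows essentially the same route as the paper: pass to the symmetric multiplicative base $\mathcal B=\{U\in\U:U=U^{-1}\}$ of Proposition~\ref{p:uT} and invoke clause (12) of Theorem~\ref{main} to get continuous pseudometrics $d_U$ with $[d_U]_{<1}\subset U$. If anything, your bookkeeping is slightly more careful than the paper's, since you index $\mathcal D$ by a base $\mathcal B_0$ of cardinality $\chi(\U)$ rather than by all of $\mathcal B$, which makes the bound $|\mathcal D|\le\chi(\U)$ immediate.
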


\begin{proof} Fix a base $\mathcal V$ of the quasi-uniformity $\U$ of cardinality $|\mathcal V|=\chi(\U)$. It follows that the family $\mathcal B=\{U\in\U:U=U^{-1}\}$ is a multiplicative base of the quasi-uniformity $\U$.

By Theorem~\ref{main}(9), for every entourage $U\in\mathcal B$ there exists a $\U$-uniform continuous pseudometric $dU$ such that $[dU]_{<1}\subset U$. Then $\mathcal D=\{dU:U\in\mathcal B\}$ is a family of continuous pseudometrics generating the uniformity $\U$ and having cardinality $|\mathcal D|\le |\mathcal B|=\chi(\U)$.
\end{proof}

Now we prove a new result characterizing uniformly regular quasi-uniform spaces among (point-)rotund quasi-uniform spaces.

A quasi-uniform space $(X,\U)$ is called
\begin{itemize}
\item {\em uniformly regular} if for any $U\in\U$ there is $V\in\U$ such that $\overline{B}(x;V)\subset B(x;U)$ for every $x\in X$;
\item {\em uniformly semiregular} if for any $U\in\U$ there is $V\in\V$ such that $\overline{B}^\circ\kern-2pt(x;V)\subset B(x;U)$ for every $x\in X$;
\item {\em uniformly completely regular} if for any entourage $U\in\U$ there exists a $\U$-uniform right continuous premetric $d:X\times X\to[0,1]$ such that $B_d(x,1)\subset B(x;U)$ for every $x\in X$.
\end{itemize}
For each quasi-uniform space we have the implications:
$$\xymatrix{
\mbox{uniformly completely regular}\ar@{=>}[r]&\mbox{uniformly regular}\ar@{=>}[r]&\mbox{uniformly semiregular}.}
$$

\begin{theorem}\label{t3.4} For a point-rotund quasi-uniform space $(X,\U)$ the following conditions are equivalent:
\begin{enumerate}
\item $(X,\U)$ is uniformly semiregular;
\item $(X,\U)$ is uniformly regular;
\item $(X,\U)$ is uniformly completely regular.
\end{enumerate}
If the uniform space $(X,\U)$ is rotund, then the conditions \textup{(1)--(3)} are equivalent to:
\begin{enumerate}
\item[(4)] the quasi-uniformity $\U$ is generated by a family of right-continuous premetrics;
\item[(5)] the quasi-uniformity $\U$ is generated by a family $\mathcal D$ of right-continuous and $\overline{\dist}$-continuous quasi-pseudo\-metrics having cardinality $|\mathcal D|\le\chi(\U)$.
\end{enumerate}
\end{theorem}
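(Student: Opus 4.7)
The plan is to dispose of the routine implications first and then concentrate on three substantive ones. The chain $(3)\Rightarrow(2)\Rightarrow(1)$ is the general implication recorded just before the statement, and $(5)\Rightarrow(4)$ is immediate because every quasi-pseudometric is a premetric. The remaining work splits into (A) $(1)\Rightarrow(3)$ under point-rotundness, (B) $(3)\Rightarrow(5)$ under rotundness, and (C) $(4)\Rightarrow(3)$, and the engine in each case is Theorem~\ref{main}.

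For (A), I would pipeline $(1)\Rightarrow(2)\Rightarrow(3)$. Fix a point-rotund multiplicative base $\mathcal B\subset\U$. Given $U\in\U$, uniform semiregularity supplies some $V\in\U$ with $\overline{B}^\circ(x;V)\subset B(x;U)$ for every $x\in X$; replacing $V$ by a smaller member of $\mathcal B$ and then using (U2), one obtains $W\in\mathcal B$ with $W\circ W\subset V$, and point-rotundness yields
\[
\overline{B}(x;W)\;\subset\;\overline{B}^\circ(x;W\circ W)\;\subset\;\overline{B}^\circ(x;V)\;\subset\;B(x;U),
\]
which is uniform regularity, proving $(1)\Rightarrow(2)$. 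For $(2)\Rightarrow(3)$, given $U\in\U$ pick $V\in\mathcal B$ with $\overline{V}\subset U$ (as entourages) and feed $V$ into Theorem~\ref{main}. The regularization $\overline{d}$ of the resulting premetric is $\U$-uniform and right-continuous by conclusions~(7) and~(9), and it satisfies $[\overline{d}]_{<1}\subset\overline{V}\subset U$; hence $B_{\overline{d}}(x,1)\subset B(x;U)$ for every $x$, yielding uniform complete regularity.

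For (B), assume $(X,\U)$ is rotund and uniformly completely regular. Fix a base $\mathcal B_0\subset\U$ with $|\mathcal B_0|=\chi(\U)$ and a rotund multiplicative base $\mathcal B\subset\U$. By (A), applied in the rotund (hence point-rotund) case, $(X,\U)$ is uniformly regular, so for each $U\in\mathcal B_0$ one can choose $V_U\in\mathcal B$ with $\overline{V_U}\subset U$. Since $\mathcal B$ is simultaneously point-, set-, and $\Delta$-rotund, applying Theorem~\ref{main} to $V_U$ and combining conclusions~(7), (9), (10), and~(11) produces a $\U$-uniform right-continuous $\overline{\dist}$-continuous quasi-pseudometric $\overline{d}_{V_U}:X\times X\to[0,1]$ with $[\overline{d}_{V_U}]_{<1}\subset\overline{V_U}\subset U$. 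The family $\mathcal D=\{\overline{d}_{V_U}:U\in\mathcal B_0\}$ has cardinality at most $\chi(\U)$ and generates $\U$: for every $W\in\U$ pick $U\in\mathcal B_0$ with $U\subset W$; then the subbase element $[\overline{d}_{V_U}]_{<1}$ lies in $U\subset W$.

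For (C), given $U\in\U$, the fact that $\{[d]_{<\e}:d\in\mathcal D,\;\e>0\}$ is a subbase of $\U$ furnishes $d_1,\dots,d_n\in\mathcal D$ and $\e>0$ with $\bigcap_{i=1}^{n}[d_i]_{<\e}\subset U$. Setting $d(x,y)=\min\{1,\e^{-1}\max_{i\le n}d_i(x,y)\}$ produces a right-continuous premetric $X\times X\to[0,1]$ (finitary $\max$ and $\min$ preserve right-continuity); it is $\U$-uniform because $[d]_{<\delta}=\bigcap_{i=1}^{n}[d_i]_{<\e\delta}\in\U$ for $0<\delta\le 1$; and $B_d(x,1)=\bigcap_iB_{d_i}(x,\e)\subset B(x;U)$, so $(X,\U)$ is uniformly completely regular. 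The only mildly subtle point along the way is the bridge from $\overline{B}^\circ$-balls to $\overline{B}$-balls in $(1)\Rightarrow(2)$; every other step is either bookkeeping or a direct quotation from Theorem~\ref{main}.
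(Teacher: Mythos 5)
Your proof is correct and follows essentially the same strategy as the paper: all the substantive implications are obtained by feeding a suitably shrunk entourage into Theorem~\ref{main} and quoting conclusions (7)--(11), with the remaining implications being bookkeeping. The only (harmless) difference is in routing --- you prove $(1)\Rightarrow(2)$ directly from point-rotundness and then $(2)\Rightarrow(3)$ via $\overline{V}\subset U$, where the paper goes straight from $(1)$ to $(3)$ via $\overline{V}^\circ\subset U$, and you derive $(5)$ from $(3)$ rather than from $(1)$; given the equivalences already established, these choices are interchangeable.
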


\begin{proof} The implications $(3)\Ra(2)\Ra(1)$ are trivial. To prove that $(1)\Ra(3)$, assume that the quasi-uniform space $(X,\U)$ is uniformly semiregular. To prove that $(X,\U)$ is uniformly completely regular, fix any entourage $U\in\U$. By the uniform semiregularity, the quasi-uniformity $\U$ contains an entourage $V\in\U$ such that $\overline{V}^\circ\subset U$. By Theorem~\ref{main}(7), there exists a $\U$-uniform right-continuous premetric $p:X\times X\to[0,1]$ such that $B_p(x,1)\subset B(x,\overline{V}^\circ)=\overline{B}^\circ(x,V)\subset B(x,U)$ for all $x\in X$. This witnesses that the quasi-uniform space $(X,\U)$ is uniformly completely regular.

Now assuming that the quasi-uniform space $(X,\U)$ is rotund, we shall prove that the conditions (1)--(3) are equivalent to the conditions (4) and (5).

First we prove that $(1)\Ra(5)$. Fix a base $\mathcal B$ of the quasi-uniformity $\U$ of cardinality $|\mathcal B|=\chi(\U)$. Since the quasi-uniform space $(X,\U)$ is uniformly semiregular, for every entourage $U\in\mathcal B$ there is an entourage $V\in\U$ such that $\overline{V}^\circ\subset U$. By Theorem~\ref{main}(9,10), there exists a right-continuous and $\overline{\dist}$-continuous  quasi-pseudometric $dU:X\times X\to[0,1]$ such that $[dU]_{<1}\subset\overline{V}^\circ \subset U$. Then the family $\mathcal D=\{dU:U\in\mathcal B\}$ of right-continuous and $\overline{\dist}$-continuous quasi-pseudometrics generates the uniformity $\U$ and has cardinality $|\mathcal D|\le|\mathcal B|=\chi(\U)$.

The implication $(5)\Ra(4)$ is trivial. To prove $(4)\Ra(3)$, assume that the quasi-uniformity $\U$ is generated by a family $\mathcal D$ of right-continuous premetrics. We lose no generality assuming that $\mathcal D$ is the family of all $\U$-uniform right-continuous premetrics on $X$. In this case the family $\mathcal B=\{[d]_{<1}:d\in\mathcal D\}$ is a base of the quasi-uniformity $\U$. To show that the quasi-uniform space $(X,\U)$ is uniformly completely regular, fix any entourage $U\in\U$ and find a right-continuous premetric $d\in\mathcal D$ such that $[d]_{<1}\subset U$. The premetric $d$ witnesses that the quasi-uniform space $(X,\U)$ is uniformly completely regular.
\end{proof}

The following corollaries can be deduced from Theorem~\ref{t3.4}(5) by analogy with Corollary~\ref{c3.2}.

\begin{corollary}\label{c3.5} The quasi-uniformity of a rotund quasi-uniform space $(X,\U)$ is generated by a single right-continuous quasi-pseudometric if and only if $(X,\U)$ is semi-regular and $\chi(\U)\le\w$.
\end{corollary}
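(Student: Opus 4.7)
The plan is to follow the same pattern as Corollary~\ref{c3.2}, adjusting for the additional requirement of right-continuity. Both directions should be short given Theorem~\ref{t3.4}.

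For the forward direction, suppose $\U$ is generated by a single right-continuous quasi-pseudometric $d$. Then $\{[d]_{<1/n}:n\in\IN\}$ is a countable base of $\U$, so $\chi(\U)\le\w$. Moreover, the singleton family $\{d\}$ witnesses condition~(4) of Theorem~\ref{t3.4}, which by the stated equivalence yields uniform semi-regularity (condition (1)).

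For the backward direction, assume $(X,\U)$ is rotund, uniformly semi-regular, and $\chi(\U)\le\w$. By Theorem~\ref{t3.4}(5), $\U$ is generated by a countable family $\mathcal D=\{d_n\}_{n\in\IN}$ of right-continuous (and $\overline{\dist}$-continuous) quasi-pseudometrics. Following Corollary~\ref{c3.2}, set
$$d=\sup_{n\in\IN}\min\{d_n,2^{-n}\}.$$
The triangle inequality is inherited from the $d_n$'s term-by-term and then preserved under supremum, so $d$ is a quasi-pseudometric; the verification that $d$ generates $\U$ is identical to the one used in Corollary~\ref{c3.2} (the minimum with $2^{-n}$ only discards behavior at scales already irrelevant for the $n$-th entourage).

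The one new point is right-continuity. For any $x\in X$ and $\e>0$, choose $N\in\IN$ with $2^{-N}<\e$; then for every $n\ge N$ and every $y\in X$ we have $\min\{d_n(x,y),2^{-n}\}\le 2^{-n}<\e$, so these indices impose no constraint. Hence
$$B_d(x,\e)=\bigcap_{n<N}\{y\in X:\min\{d_n(x,y),2^{-n}\}<\e\}=\bigcap_{n<N}B_{d_n}(x,\e),$$
a finite intersection of open sets (each $d_n$ being right-continuous, hence having open balls). Thus $d$ has open balls, which for a quasi-pseudometric is equivalent to right-continuity.

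The only non-routine point is the last one, and it is handled by the uniform bound $2^{-n}$ which collapses an \emph{a priori} infinite supremum into a finite intersection at every fixed scale $\e$; without this truncation the supremum of right-continuous quasi-pseudometrics need not remain right-continuous, so this is the step that genuinely uses the chosen form of $d$.
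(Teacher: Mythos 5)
Your route is exactly the one the paper intends (the paper merely says the corollary ``can be deduced from Theorem~\ref{t3.4}(5) by analogy with Corollary~\ref{c3.2}''), and the forward direction, the verification that $d=\sup_{n}\min\{d_n,2^{-n}\}$ is a quasi-pseudometric, and the check that it generates $\U$ are all fine. But your last sentence contains a genuine error: for a \emph{quasi}-pseudometric, having open balls is \emph{not} equivalent to right-continuity. The paper asserts that equivalence only for pseudometrics; for a general premetric, right-continuity is equivalent to having open \emph{and closed} balls. Indeed every $\U$-uniform quasi-pseudometric has open balls by Proposition~\ref{p1.3}, yet is typically not right-continuous --- e.g.\ the standard quasi-metric generating the Sorgenfrey topology has open balls $[x,x+\e)$ while its closed balls $[x,x+\e]$ are not closed. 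The entire machinery of $\overline{d}$, $\overline{d}^\circ$ and rotundity in this paper exists precisely because of this failure, so the step as written proves nothing.

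The gap is easy to close. Either repeat your computation for closed balls: for $\e>0$,
$$B_d(x;\e]=\bigcap_{n\in\IN}\{y\in X:\min\{d_n(x,y),2^{-n}\}\le\e\}=\bigcap_{n:\;2^{-n}>\e}B_{d_n}(x;\e],$$
a finite intersection of closed sets (each right-continuous $d_n$ has closed balls), which together with your open-ball computation yields right-continuity of $d$; or, more directly, observe that $d(x,\cdot)$ is the uniform limit of the continuous functions $\max_{n\le N}\min\{d_n(x,\cdot),2^{-n}\}$, since the tail of the supremum is bounded by $2^{-N}$, and is therefore continuous. A cosmetic remark: your displayed identity $\bigcap_{n<N}\{y:\min\{d_n(x,y),2^{-n}\}<\e\}=\bigcap_{n<N}B_{d_n}(x,\e)$ need not be an equality when $2^{-n}<\e$ for some $n<N$ (that factor is then all of $X$); only $\supset$ holds, but this does not affect the openness conclusion.
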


\begin{corollary}\label{c3.6} The quasi-uniformity of a rotund quasi-uniform space $(X,\U)$ is generated by a single right-continuous quasi-metric if and only if $(X,\U)$ is semi-regular, $\chi(\U)\le\w$ and $(X,\tau_\U)$ is a $T_1$-space.
\end{corollary}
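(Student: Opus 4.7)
The plan is to deduce the corollary from Corollary~\ref{c3.5} by observing that, for a right-continuous quasi-pseudometric generating $\U$, the quasi-metric axiom (M2) is precisely equivalent to the $T_1$-separation of $\tau_\U$. Both directions are then short.

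For the necessity, I would assume that $\U$ is generated by a single right-continuous quasi-metric $d$. Since every quasi-metric is a quasi-pseudometric, Corollary~\ref{c3.5} immediately yields that $(X,\U)$ is semi-regular and $\chi(\U)\le\w$. To verify the $T_1$-axiom, I would invoke Proposition~\ref{p1.3}: the $\U$-uniform quasi-pseudometric $d$ has open balls with respect to $\tau_\U$. Hence, for any two distinct points $x,y\in X$, axiom (M2) gives $d(x,y)>0$, so any $\e$ with $0<\e<d(x,y)$ produces an open neighborhood $B_d(x,\e)$ of $x$ not containing $y$.

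For the sufficiency, assume that $(X,\U)$ is rotund, semi-regular, has $\chi(\U)\le\w$, and that $(X,\tau_\U)$ is a $T_1$-space. By Corollary~\ref{c3.5} there exists a $\U$-uniform right-continuous quasi-pseudometric $d$ generating $\U$. It remains to upgrade $d$ to a quasi-metric, i.e., to verify (M2). For distinct $x,y\in X$, the $T_1$-property yields an open neighborhood $O_x$ of $x$ with $y\notin O_x$. Since $\U$ is generated by $d$, the family $\{B_d(x,\e):\e>0\}$ is a neighborhood base at $x$ (using that the $d$-balls are open by Proposition~\ref{p1.3}), so some $\e>0$ satisfies $B_d(x,\e)\subset O_x$. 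This forces $d(x,y)\ge\e>0$, confirming (M2).

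I do not expect any serious obstacle: the entire content lies in noting that the quasi-pseudometric produced by Corollary~\ref{c3.5} automatically has open balls forming a neighborhood base at each point, which is exactly what is needed to translate the $T_1$-axiom into the extra condition (M2).
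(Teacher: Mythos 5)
Your proof is correct and follows the route the paper intends: the paper leaves Corollary~\ref{c3.6} unproved, remarking only that it can be deduced from Theorem~\ref{t3.4}(5) by analogy with Corollary~\ref{c3.2}, and your reduction to Corollary~\ref{c3.5} together with the observation that, for a $\U$-uniform quasi-pseudometric whose open balls form neighborhood bases, axiom (M2) is precisely the $T_1$-property of $\tau_\U$, is exactly the intended bookkeeping. No gaps.
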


\section{Point-rotund topological spaces}\label{s4}

A topological space $X$ is defined to be ({\em point-}){\em rotund} if the topology of $X$ is generated by a quasi-uniformity that has a (point-)rotund multiplicative base. The following corollary of Theorem~\ref{main} was first proved by the authors  \cite{BR} by a different method.

\begin{theorem}\label{t4.1} If a topological space $X$ is point-rotund, then for any point $x\in X$ and  neighborhood $O_x\subset X$ of $x$ there exists a continuous function $f:X\to[0,1]$ such that $f(x)=0$ and $f^{-1}\big([0,1)\big)\subset\overline{O}^\circ_x$.
\end{theorem}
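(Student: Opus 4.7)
The plan is to reduce the statement to a direct application of Theorem~\ref{main}. By hypothesis, the topology of $X$ is induced by some quasi-uniformity $\U$ which admits a point-rotund multiplicative base $\mathcal B$. Given the point $x\in X$ and the neighborhood $O_x$, first I would use the fact that $\{B(x;U):U\in\U\}$ is a neighborhood base at $x$ to pick an entourage $U\in\U$ (and hence some $U'\in\mathcal B$ contained in $U$, which we may call $U$ by abuse of notation) with $B(x;U)\subset O_x$.

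Next, I would invoke Theorem~\ref{main} for this $U$ and the point-rotund base $\mathcal B$. This yields a $\U$-uniform quasi-pseudometric $d$ with open balls and a regularization $\overline{d}$ such that, by parts (7)--(9), $\overline{d}=\overline{d}^\circ$, $\overline{d}$ is right-continuous, and $[\overline{d}^\circ]_{<1}\subset\overline{U}^\circ$. Now I would set
\[
f:X\to[0,1],\qquad f(y):=\overline{d}(x,y).
\]
The right-continuity given by Theorem~\ref{main}(9) means exactly that for each fixed first coordinate the map $y\mapsto\overline{d}(x,y)$ is continuous, so $f$ is continuous. Moreover $f(x)=\overline{d}(x,x)=0$ since the open ball $B_d(x,\e)$ contains $x$ for every $\e>0$, so $x\in\overline{B_d(x,\e)}$ for all $\e>0$.

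It remains to verify the inclusion $f^{-1}\bigl([0,1)\bigr)\subset\overline{O}^\circ_x$. If $y\in f^{-1}\bigl([0,1)\bigr)$ then $\overline{d}^\circ(x,y)=\overline{d}(x,y)<1$, so $(x,y)\in\overline{U}^\circ$ by Theorem~\ref{main}(8); unpacking the definition of $\overline{U}^\circ$ this means $y\in\overline{B(x;U)}^\circ$. Since $B(x;U)\subset O_x$, monotonicity of closure and interior gives $\overline{B(x;U)}^\circ\subset\overline{O}_x^\circ$, which is the desired conclusion.

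The proof is essentially bookkeeping on top of Theorem~\ref{main}; the only point that requires care is recognizing that point-rotundness feeds into the theorem precisely so that its conclusions (7)--(9) give simultaneously the right-continuity of $\overline{d}$ (for the continuity of $f$) and the identification $\overline{d}=\overline{d}^\circ$ (to translate the bound $[\overline{d}]_{<1}\subset\overline{U}$ of (7) into the strictly stronger bound $[\overline{d}^\circ]_{<1}\subset\overline{U}^\circ$ of (8) needed for the target $\overline{O}_x^\circ$). No other work is required.
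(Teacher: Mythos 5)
Your proof is correct and follows essentially the same route as the paper: choose an entourage $U$ with $B(x;U)\subset O_x$, apply Theorem~\ref{main}(7--9) to obtain the right-continuous premetric $\overline{d}=\overline{d}^\circ$ with $[\overline{d}^\circ]_{<1}\subset\overline{U}^\circ$, and set $f=\overline{d}(x,\cdot)$. The only superfluous step is replacing $U$ by an element of $\mathcal B$, since Theorem~\ref{main} applies to arbitrary entourages of $\U$.
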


\begin{proof} By our assumption, the topology of $X$ is generated by a  quasi-uniformity $\U$ having a  point-rotund multiplicative base $\mathcal B$. It this case for every point $x\in X$ and a neighborhood $O_x\subset X$ there exists an entourage $U\in\U$ such that $B(x;U)\subset O_x$. By Theorem~\ref{main}(7,9), there is a right-continuous premetric $p:X\times X\to[0,1]$ such that $B_p(x,1)\subset \overline{B}^\circ(x;U)\subset\overline{O}^\circ_x$. Then the function $f:X\to[0,1]$, $f:y\mapsto p(x,y)$, is continuous and has the desired properties $f(x)=0$ and  $f^{-1}\big([0,1)\big)\subset \overline{O}^\circ_x$.
\end{proof}

This theorem implies the following corollary first proved in \cite{BR}:

\begin{corollary}\label{c4.2} A point-rotund topological space $X$ is:
\begin{enumerate}
\item completely regular if and only if $X$ is regular if and only if $X$ is semiregular;
\item functionally Hausdorff if and only if $X$ is Hausdorff if and only if $X$ is semi-Hausdorff.
\end{enumerate}
\end{corollary}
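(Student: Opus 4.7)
The proof will deduce both parts by direct application of Theorem~\ref{t4.1}. The right-to-left implications in the diagram of separation axioms (completely regular $\Ra$ regular $\Ra$ semiregular, and functionally Hausdorff $\Ra$ Hausdorff $\Ra$ semi-Hausdorff) hold for any topological space, so the only content is to prove, under the point-rotundness hypothesis, that semiregularity implies complete regularity and that semi-Hausdorffness implies functional Hausdorffness. In both cases the strategy is the same: use the defining condition to replace a neighborhood $O_x$ of $x$ by a smaller one whose regularized closure $\overline{U}^\circ_x$ still witnesses the relevant separation, then feed $U_x$ into Theorem~\ref{t4.1} to manufacture a continuous function into $[0,1]$.

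For part (1), assume $X$ is semiregular. Given $x\in X$ and an open neighborhood $O_x$ of $x$, semiregularity yields an open neighborhood $U_x$ of $x$ with $\overline{U}^\circ_x\subset O_x$. Applying Theorem~\ref{t4.1} to $U_x$ produces a continuous $f\colon X\to[0,1]$ with $f(x)=0$ and $f^{-1}([0,1))\subset \overline{U}^\circ_x\subset O_x$, which is exactly the complete regularity condition. Hence semiregularity and complete regularity are equivalent for point-rotund spaces, and the implication chain closes into a cycle of equivalences.

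For part (2), assume $X$ is semi-Hausdorff and take distinct points $x,y\in X$. By the definition of semi-Hausdorffness, $x$ has a neighborhood $U_x$ with $y\notin \overline{U}^\circ_x$. Theorem~\ref{t4.1} supplies a continuous $f\colon X\to[0,1]$ with $f(x)=0$ and $f^{-1}([0,1))\subset \overline{U}^\circ_x$; since $y$ lies outside $\overline{U}^\circ_x$ we must have $f(y)=1\ne 0=f(x)$, witnessing functional Hausdorffness. Combined with the trivial implications this gives the claimed three-way equivalence.

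There is essentially no obstacle: the technical work is entirely absorbed by Theorem~\ref{t4.1}, which is why both statements collapse to short applications once that theorem is available. The only small care needed is to distinguish the two different roles of $\overline{U}^\circ_x$—as a subset of the original neighborhood (part 1) versus as a set avoiding the second point (part 2)—but in both cases the condition on $\overline{U}^\circ_x$ is exactly what the semi-axiom furnishes and exactly what Theorem~\ref{t4.1} requires.
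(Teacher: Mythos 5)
Your proof is correct and follows exactly the route the paper intends: the paper gives no explicit argument beyond "Theorem~\ref{t4.1} implies the corollary," and your two applications of that theorem (to a semiregular-witness $U_x\subset O_x$ for part (1), and to a semi-Hausdorff-witness $U_x$ avoiding $y$ in its $\overline{U}^{\,\circ}_x$ for part (2)), together with the standard implications from the separation diagram, supply precisely the missing details.
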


This theorem shows that for point-rotund topological spaces the diagram describing the interplay between various separation properties transforms to the following symmetric form:
$$\xymatrix{
&&T_{\frac12 2}\ar@{<=>}[d]&T_{\frac12 3}\ar@{=>}[l]\ar@{<=>}[d]\ar@{=>}[r]&\tfrac12R\\
T_0&T_1\ar@{=>}[l]&T_2\ar@{=>}[l]&T_3\ar@{=>}[l]\ar@{=>}[r]&R\ar@{<=>}[u]\ar@{<=>}[d]&\hskip-55pt.\\
&&T_{2\frac12}\ar@{<=>}[u]&T_{3\frac12}\ar@{=>}[l]\ar@{<=>}[u]\ar@{=>}[r]&R\tfrac12\\
}
$$

Corollary~\ref{c4.2} combined with Proposition~\ref{p:uT} imply the following characterization of point-rotund semiregular spaces.

\begin{corollary} A semi-regular space $X$ is point-rotund if and only if $X$ is completely regular.
\end{corollary}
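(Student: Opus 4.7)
The plan is to handle the two directions of the equivalence separately. The forward direction is immediate: if a semi-regular space $X$ is point-rotund, then by Corollary~\ref{c4.2}(1), $X$ is completely regular, since that corollary asserts that for point-rotund spaces, semi-regularity, regularity and complete regularity coincide.

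For the backward direction, suppose $X$ is completely regular (note that complete regularity already implies semi-regularity, so the semi-regularity assumption is redundant here). I would produce a uniformity $\U$ on $X$ whose topology $\tau_\U$ coincides with $\tau_X$, and then invoke Proposition~\ref{p:uT} to conclude that $(X,\U)$ is rotund, hence in particular point-rotund, which by definition means that $X$ is a point-rotund topological space.

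The uniformity is constructed from the continuous $[0,1]$-valued functions on $X$: let $\U$ be the uniformity generated by the symmetric subbase
$$\bigl\{U_{f,\e}:f\in C(X,[0,1]),\;\e>0\bigr\},\quad\text{where }U_{f,\e}=\{(x,y)\in X\times X:|f(x)-f(y)|<\e\}.$$
Since each $U_{f,\e}$ equals $U_{f,\e}^{-1}$, the family $\U$ is indeed a uniformity. The key step is to verify $\tau_\U=\tau_X$. The inclusion $\tau_\U\subset\tau_X$ follows because each ball $B(x;U_{f,\e})=f^{-1}\bigl((f(x)-\e,f(x)+\e)\bigr)$ is $\tau_X$-open by continuity of $f$. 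For the reverse inclusion $\tau_X\subset\tau_\U$, given a point $x\in X$ and a $\tau_X$-neighborhood $O_x$ of $x$, complete regularity yields a continuous function $f:X\to[0,1]$ with $f(x)=0$ and $f^{-1}\bigl([0,1)\bigr)\subset O_x$; then $B(x;U_{f,1})=f^{-1}\bigl([0,1)\bigr)\subset O_x$, showing $O_x$ is a $\tau_\U$-neighborhood of $x$.

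There is no real obstacle; the only slightly technical point is the identification $\tau_\U=\tau_X$, which is the standard construction of the \emph{finest} uniformity compatible with a completely regular topology. Once this is in place, Proposition~\ref{p:uT} applied to the uniform space $(X,\U)$ finishes the proof, since rotundness implies point-rotundness by the diagram preceding Proposition~\ref{p:uT}.
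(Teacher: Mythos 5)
Your proof is correct and follows the paper's intended argument exactly: the forward direction is Corollary~\ref{c4.2}(1), and the backward direction realizes the completely regular topology as $\tau_\U$ for a uniformity $\U$ and applies Proposition~\ref{p:uT}. (The only quibble is cosmetic: the uniformity generated by $C(X,[0,1])$ is not the \emph{finest} compatible uniformity but rather the one induced by the evaluation embedding into $[0,1]^{C(X,[0,1])}$; this does not affect the argument, since any compatible uniformity suffices.)
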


We do not known if Theorem~\ref{t4.1} can be reversed.

\begin{problem} Is a topological space $X$ point-rotund if for any point $x\in X$ and  neighborhood $O_x\subset X$ of $x$ there exists a continuous function $f:X\to[0,1]$ such that $f(x)=0$ and $f^{-1}\big([0,1)\big)\subset\overline{O}^\circ_x$?
\end{problem}

\section{Quasi-uniformities on topological semigroups and monoids}\label{s5}

In this section we study four natural quasi-uniformities on  topological
semigroups and topological monoids. Some of the obtained results are known (see \cite{Koper}, \cite{KMR}) and we include them for the convenience of the reader.

We recall that a {\em topological semigroup} is a topological space $S$ endowed with a continuous associative binary operation $S\times S\to S$, $(x,y)\mapsto xy$.

An element $e\in S$ of a topological semigroup $S$ is called
\begin{itemize}
\item a {\em left unit} if $ex=x$ for all $x\in S$;
\item a {\em right unit} if $xe=x$ for all $x\in S$;
\item a {\em unit} if $xe=x=ex$ for all $x\in S$.
\end{itemize}
It is well-known (and easy to prove) that any two units in a semigroup coincide. A  topological semigroup $S$ possessing a unit is called a {\em topological monoid}. In the sequel for a (left, right) unit $e$ in a topological semigroup $S$ by $\mathcal N_e$ we denote a neighborhood base at $e$.

Any topological semigroup $S$ with a right unit $e$ carries the {\em left quasi-uniformity} $\mathcal L$ generated by the base $\mathcal B_{\mathcal L}=\big\{\{(x,y)\in S\times S:y\in xU\}:U\in\mathcal N_e\}$, and any  topological semigroup $S$ with a left unit $e$ carries the {\em right quasi-uniformity} $\mathcal R$ generated by the base $\mathcal B_{\mathcal R}=\big\{\{(x,y)\in S\times S:y\in Ux\}:U\in\mathcal N_e\}$.

For any topological monoid $S$ both quasi-uniformities $\mathcal L$ and $\mathcal R$ are well-defined. These quasi-uniformities induce two other quasi-uniformities: $\mathcal L\vee\mathcal R$ and $\mathcal L\wedge\mathcal R$, which are generated by the bases:
$$
\begin{aligned}
\mathcal B_{\mathcal L\vee\mathcal R}&=\big\{\{(x,y)\in S\times S:y\in xU\cap Ux\}:U\in\mathcal N_e\big\}\mbox{ \ and}\\
\mathcal B_{\mathcal L\wedge\mathcal R}&=\big\{\{(x,y)\in S\times S:y\in UxV\}:U,V\in\mathcal N_e\big\},
\end{aligned}
$$
respectively.

Now we will detect topological monoids whose topology is compatible with the quasi-uniformities $\mathcal L$, $\mathcal R$, $\mathcal L\vee\mathcal R$ and $\mathcal L\wedge\mathcal R$.

We shall say that a topological semigroup $S$ has:
\begin{itemize}
\item {\em open left shifts} if for every $a\in S$ the left shift $\ell_a:S\to S$, $\ell_a:x\mapsto ax$, is an open map of $S$;
\item {\em open right shifts} if for every $a\in S$ the right shift $r_a:S\to S$, $r_a:x\mapsto xa$, is an open map of $S$;
\item {\em open shifts} if $S$ has open left shifts and open right shifts;
\item {\em open central shifts} if for every $a\in S$ the map $c_a:S\times S\to S$, $c_a:(x,y)\mapsto xay$, is open.
\end{itemize}

These notions are tightly connected with the openness of shifts at the unit.

An element $e\in S$ of a topological semigroup $S$ is called
\begin{itemize}
\item an {\em open left unit} if for every neighborhood $U\subset S$ of $e$ and every $x\in S$ the set $Ux=\{ux:u\in U\}$ is a neighborhood of the point $ex=x$ in $S$;
\item an {\em open right unit}  if for every neighborhood $U\subset S$ of $e$ and every $x\in S$ the set $xU=\{xu:u\in U\}$ is a neighborhood of the point $xe=x$ in $S$;
\item an {\em open  unit} if for every neighborhood $U\subset S$ of $e$ and every $x\in S$ the sets $UxU$ is a neighborhood of the point $x=ex=xe$ in $S$.
\end{itemize}


In the following four propositions we characterize topological monoids whose quasi-uniformities
$\mathcal L,\mathcal R, \mathcal L\vee\mathcal R$, and $\mathcal L\wedge\mathcal R$ generate the topology of the monoid.

\begin{proposition}\label{p5.1} For a topological monoid $S$ the following conditions are equivalent:
\begin{enumerate}
\item the quasi-uniformity $\mathcal L\wedge \mathcal R$ generates the topology of $S$;
\item the unit $e$ of $S$ is an open unit;
\item the semigroup $S$ has open central shifts.
\end{enumerate}
The conditions \textup{(1)--(3)} imply:
\begin{enumerate}
\item[(4)] the quasi-uniformity $\mathcal L\wedge \mathcal R$ is rotund.
\end{enumerate}
\end{proposition}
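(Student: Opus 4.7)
My plan is to establish $(1)\Leftrightarrow(2)$ and $(2)\Leftrightarrow(3)$ separately, and then deduce $(4)$ from $(1)$; throughout I take $\mathcal N_e$ to be the full neighborhood filter at $e$, which does not alter the quasi-uniformity. For $(1)\Leftrightarrow(2)$, note that $\mathcal L\wedge\mathcal R$ generates the topology $\tau$ of $S$ if and only if, for every $x\in S$, the sets $\{UxV:U,V\in\mathcal N_e\}$ form a neighborhood base at $x$ in $\tau$. Joint continuity of the map $(u,y,v)\mapsto uyv$ at $(e,x,e)$ instantly supplies that every $\tau$-neighborhood of $x$ contains some $UxV$, so the only non-trivial requirement is that each $UxV$ itself be a $\tau$-neighborhood of $x$. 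Since $UxV\supset(U\cap V)x(U\cap V)$, this is precisely the open-unit condition $(2)$.

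For $(2)\Leftrightarrow(3)$, assume $e$ is an open unit and fix $a\in S$. To verify openness of $c_a$, I pick any open $W\subset S\times S$ and any $(x,y)\in W$, choose open $U\ni x$ and $V\ni y$ with $U\times V\subset W$, and shrink using continuity of the shifts $\ell_y$ and $r_x$ to obtain $V_0\in\mathcal N_e$ satisfying $V_0 x\subset U$ and $yV_0\subset V$. Then $V_0(xay)V_0\subset UaV\subset c_a(W)$, and the left-hand side is a $\tau$-neighborhood of $xay$ by the open-unit hypothesis, so $c_a(W)$ is $\tau$-open. Conversely, given $x\in S$ and a neighborhood $U$ of $e$, openness of $c_x$ makes $c_x(U^\circ\times U^\circ)=U^\circ xU^\circ$ open in $\tau$; since $x=c_x(e,e)\in U^\circ xU^\circ\subset UxU$, the set $UxU$ is a $\tau$-neighborhood of $x$.

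For $(1)\Rightarrow(4)$, I exhibit the natural base
\[
\mathcal B=\{E_{U,V}:U,V\in\mathcal N_e\},\qquad E_{U,V}=\{(x,y)\in S\times S:y\in UxV\},
\]
of $\mathcal L\wedge\mathcal R$. A direct computation gives the composition rule $E_{W_1,W_2}\circ E_{U_1,U_2}=E_{U_1W_1,\,W_2U_2}$; since $U_1W_1\supset U_1$ and $W_2U_2\supset U_2$ are neighborhoods of $e$, the base $\mathcal B$ is multiplicative. For rotundness, I compute $B(\overline A;E_{U_1,U_2})=U_1\overline A\,U_2$ and $B(A;E_{W_1,W_2}\circ E_{U_1,U_2})=U_1W_1AW_2U_2$. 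Given $u_1\in U_1$, $a\in\overline A$ and $u_2\in U_2$, I pick a net $a_\alpha\to a$ in $A$; joint continuity of multiplication yields $u_1a_\alpha u_2\to u_1au_2$, and $e\in W_1\cap W_2$ implies $u_1a_\alpha u_2\in U_1AU_2\subset U_1W_1AW_2U_2$, so $u_1au_2\in\overline{U_1W_1AW_2U_2}$, which is the required rotundness inclusion.

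The only delicate step in the argument is the separation of the two halves of the neighborhood-base condition in $(1)\Leftrightarrow(2)$; once that is disentangled, everything else, including the rotundness, is driven purely by joint continuity of multiplication, the trivial fact that $W_1$ and $W_2$ contain $e$, and the easy composition rule for the entourages $E_{U,V}$.
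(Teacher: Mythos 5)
Your proof is correct and follows essentially the same route as the paper: the equivalence of (1) and (2) via the balls $UxV$, the same shrinking argument with the shifts $\ell_y$ and $r_x$ for $(2)\Rightarrow(3)$, and rotundness of the base $\{E_{U,V}\}$ via continuity of multiplication giving $U_1\overline{A}U_2\subset\overline{U_1AU_2}$. You merely spell out a few steps the paper leaves as "easy" (the composition rule $E_{W_1,W_2}\circ E_{U_1,U_2}=E_{U_1W_1,W_2U_2}$ and the implication $(3)\Rightarrow(2)$), all of which check out.
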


\begin{proof} The implications $(3)\Ra(1)\Leftrightarrow(2)$ follow easily from the corresponding definitions.
\smallskip

$(2)\Ra(3)$ Assume that $e$ is an open unit in $S$. Given any $a\in S$, we need to prove that the central shift $c_a:S\times S\to S$, $c_a:(x,y)\subset xay$, is an open map. We should prove that for any non-empty open set $W\subset S\times S$ the image $c_a(W)$ is open in $S$. Fix any point $y\in c_a(W)$ and find a pair $(v,w)\in W$ such that $y=vaw$. Find two open sets $O_v,O_w\subset S$ such that $(v,w)\in O_v\times O_w\subset W$. By the continuity of the right shift $r_v:S\to S$, $r_v:x\mapsto xv$, and the left shift $\ell_w:S\to S$, $\ell_w:x\mapsto wx$, the set $U=r_v^{-1}(O_v)\cap \ell_w^{-1}(O_w)$ is an open neighborhood of $e$. Since $e$ is an open unit, the set $UyU$ is a neighborhood of $y$. Since $UyU=UvawU=r_v(U)a\ell_w(U)\subset O_vaO_w=c_a(O_v\times O_w)\subset c_a(W)$, the point $y$ is an interior point of $c_a(W)$, which implies that the set $c_a(W)$ is open in $S$.
\smallskip

$(1)\Ra(4)$ Any open neighborhoods $U,U'\in\mathcal N_e$ of $e$ determines the entourage $$E_{U,U'}=\{(x,y)\in S\times S:y\in UxU'\}\in\mathcal L_S\wedge\mathcal R_S.$$ It is easy to see that the family $\mathcal B_{\mathcal L\wedge\mathcal R}=\{E_{U,U'}:U,U'\in\mathcal N_e\}$ is a multiplicative base of the quasi-uniformity $\mathcal L\wedge\mathcal R$. The rotundness of the base $\mathcal B$ follows from the inclusion
$$B(\overline{A};E_{V,V'})=V\overline{A}V'\subset \overline{VAV'}=\overline{B}(A;E_{V,V'})$$ holding for every non-empty subset $A\subset X$ and every entourage $E_{V,V'}\in\mathcal B_{\mathcal L\wedge\mathcal R}$.
\end{proof}

By analogy we can prove the ``left'' and ``right'' versions of Proposition~\ref{p5.1}. The equivalences (1)--(3) in these propositions have been proved earlier by Kopperman \cite[Lemma 7]{Koper}.

\begin{proposition}\label{p5.2} For a topological semigroup $S$ with a right unit $e$ the following conditions are equivalent:
\begin{enumerate}
\item the quasi-uniformity $\mathcal L$ generates the topology of $S$;
\item $e$ is an open right unit in $S$;
\item $S$ has open left shifts.
\end{enumerate}
The conditions \textup{(1)--(3)} imply:
\begin{enumerate}
\item[(4)] the quasi-uniformity $\mathcal L$ is rotund.
\end{enumerate}
\end{proposition}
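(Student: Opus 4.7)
The plan is to exploit the identity $B(x; E_U) = xU$ for the basic entourage $E_U = \{(x,y) \in S \times S : y \in xU\}$ of $\mathcal L$, so that the $\tau_{\mathcal L}$-topology has $\{xU : U \in \mathcal N_e\}$ as a neighborhood base at each $x \in S$.

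For $(1) \Leftrightarrow (2)$, I will use that continuity of the left shift $\ell_x : z \mapsto xz$ at $e$ always provides, for any original neighborhood $V$ of $x = \ell_x(e)$, some $U \in \mathcal N_e$ with $xU = \ell_x(U) \subset V$; hence the inclusion $\tau \subset \tau_{\mathcal L}$ holds automatically, and $\tau_{\mathcal L} = \tau$ reduces to the requirement that each $xU$ be an original neighborhood of $x$, which is precisely the open-right-unit condition on $e$.

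For $(2) \Leftrightarrow (3)$, the implication $(3) \Rightarrow (2)$ is immediate since, for any open $U \in \mathcal N_e$, the set $xU = \ell_x(U)$ is open and contains $xe = x$. Conversely, given an open set $W$ and a point $xw \in xW$, continuity of $\ell_w$ at $e$ (using $we = w$) yields $V \in \mathcal N_e$ with $wV \subset W$, so $xwV \subset xW$, and the open-right-unit property makes $xwV$ a neighborhood of $xw$, proving $xW$ open.

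For $(1)$--$(3) \Rightarrow (4)$, I will take $\mathcal B = \{E_U : U \text{ an open neighborhood of } e\}$ and show it is a rotund multiplicative base of $\mathcal L$. Multiplicativity follows from the identity $E_U \circ E_V = E_{UV}$ together with the fact that, under open left shifts, $UV = \bigcup_{u \in U} \ell_u(V)$ is open (and contains $ee = e$). For rotundness, given $A \subset S$ and open neighborhoods $U, W$ of $e$, take $y = xu \in \overline{A}U$ with $x \in \overline{A}$; every open neighborhood $N$ of $y$ contains, by continuity of the right shift $r_u$ at $x$, a neighborhood $M$ of $x$ with $Mu \subset N$, and choosing any $a \in M \cap A$ gives $au \in N$ together with $au = a \cdot e \cdot u \in AWU$ (since $e \in W$), so $y \in \overline{AWU} = \overline{B(A; E_W \circ E_U)}$. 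The main subtlety lies in the multiplicativity verification, which genuinely depends on the open-left-shifts hypothesis to ensure that the product set $UV$ remains open.
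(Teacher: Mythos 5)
Your proposal is correct and follows essentially the same route as the paper, which proves the $\mathcal L\wedge\mathcal R$ version (Proposition~\ref{p5.1}) in detail and obtains Proposition~\ref{p5.2} as its direct analogue: the same basic entourages $E_U=\{(x,y):y\in xU\}$, the same reduction of $(1)\Leftrightarrow(2)\Leftrightarrow(3)$ to the identity $B(x;E_U)=xU$, and the same rotundness inclusion $\overline{A}U\subset\overline{AU}\subset\overline{AWU}$ via continuity of right shifts. Your explicit check that $UV$ stays open (so that $E_U\circ E_V=E_{UV}$ lands back in the base) is a detail the paper leaves implicit, but it is not a divergence in method.
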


\begin{proposition}\label{p5.3} For a topological semigroup $S$  with a left unit $e$ the following conditions are equivalent:
\begin{enumerate}
\item the quasi-uniformity $\mathcal R$ generates the topology of $S$;
\item $e$ is an open left unit in $S$;
\item $S$ has open right shifts;
\end{enumerate}
The conditions \textup{(1)--(3)} imply:
\begin{enumerate}
\item[(4)] the quasi-uniformity $\mathcal R$ is rotund.
\end{enumerate}
\end{proposition}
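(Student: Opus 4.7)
The plan is to mirror the argument of Proposition~\ref{p5.1} with the base $\mathcal B_{\mathcal R}=\{E_U:U\in\mathcal N_e\}$ of $\mathcal R$, where $E_U=\{(x,y)\in S\times S:y\in Ux\}$, reversing all one-sided constructions so that the translating neighborhood stands to the \emph{left} of its argument. The equivalence $(1)\Leftrightarrow(2)$ is then immediate from the definitions: since $B(x;E_U)=Ux$, the $\mathcal R$-neighborhood filter at each $x\in S$ consists of the sets $Ux$ with $U\in\mathcal N_e$, so $\mathcal R$ generates the given topology iff every $Ux$ is a neighborhood of $x=ex$, which is exactly the open-left-unit condition.

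For $(2)\Ra(3)$ I would fix $a\in S$ and an open set $W\subset S$; given $y=wa\in Wa$ with $w\in W$, joint continuity of the semigroup operation at $(e,w)$ yields $U\in\mathcal N_e$ and an open $V\ni w$ with $UV\subset W$, whence $Uw\subset W$ and $Uy=(Uw)a\subset Wa$, and then $(2)$ makes $Uy$ a neighborhood of $y=ey$, so $y$ is interior to $r_a(W)$. Conversely, for $(3)\Ra(2)$, given $U\in\mathcal N_e$ and $x\in S$ I would pick an open $V\subset U$ with $e\in V$; then $Vx=r_x(V)$ is open, contains $x=ex$, and is contained in $Ux$, proving that $Ux$ is a neighborhood of $x$.

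For the implication $(1)$--$(3)\Ra(4)$ the aim is to verify that $\mathcal B_{\mathcal R}$ is a multiplicative rotund base of $\mathcal R$. Associativity of the semigroup operation gives $E_U\circ E_V=\{(x,z):z\in V(Ux)\}=\{(x,z):z\in (VU)x\}=E_{VU}$, and since $e$ is a left unit one has $eU=U\subset VU$, so $VU\in\mathcal N_e$ and $E_{VU}\in\mathcal B_{\mathcal R}$, which establishes multiplicativity. Continuity of each left shift $\ell_v:S\to S$, $x\mapsto vx$, yields $v\overline{A}\subset\overline{vA}$ for every $v\in V$ and every $A\subset S$, hence $V\overline{A}=\bigcup_{v\in V}v\overline{A}\subset\overline{VA}$; combining this with the inclusion $VA\subset VWA$ (which follows from $e\in W$ and $ea=a$, so $A\subset WA$) I conclude
$$B(\overline{A};E_V)=V\overline{A}\subset\overline{VA}\subset\overline{VWA}=\overline{B(A;E_W\circ E_V)},$$
which is the required rotundness inequality.

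The main bookkeeping point, and the only place deserving care, is the orientation of composition: because $\mathcal R$ encodes translations acting on the left of their argument, the composition $E_U\circ E_V$ equals $E_{VU}$ rather than $E_{UV}$, and the closure-preservation step in the rotundness argument invokes continuity of the \emph{left} shifts rather than the right shifts. Once this is kept straight, no further obstacle is anticipated.
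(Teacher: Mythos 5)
Your proof is correct and is exactly the argument the paper intends: the paper only states that Proposition~\ref{p5.3} follows ``by analogy'' with Proposition~\ref{p5.1}, and your write-up carries out that analogy faithfully, with the key bookkeeping (that $E_U\circ E_V=E_{VU}$ and that the rotundness step uses continuity of left shifts, plus $A\subset WA$ from $e$ being a left unit) handled correctly.
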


Propositions~\ref{p5.2} and \ref{p5.3} imply the following known characterization (see \cite[1.8]{KMR}):

\begin{proposition} For a topological monoid $S$ the following conditions are equivalent:
\begin{enumerate}
\item the quasi-uniformity $\mathcal L\vee \mathcal R$ generates the topology of $S$;
\item the unit $e$ of $S$ is an open left unit and an open right unit in $S$;
\item $S$ has open shifts.
\end{enumerate}
\end{proposition}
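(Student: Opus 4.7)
The plan is to derive this proposition directly from Propositions~\ref{p5.2} and~\ref{p5.3}, which settle the analogous question for the one-sided quasi-uniformities $\mathcal L$ and $\mathcal R$.

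For the equivalence $(2)\Leftrightarrow(3)$, I would simply conjoin the equivalences $(2)\Leftrightarrow(3)$ of Propositions~\ref{p5.2} and~\ref{p5.3}: being an open right unit is the same as having open left shifts, and being an open left unit is the same as having open right shifts, so $e$ is simultaneously an open left and an open right unit if and only if $S$ has both open left and open right shifts, i.e.\ open shifts.

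For $(2)\Rightarrow(1)$, assume that $e$ is both an open left and an open right unit. Propositions~\ref{p5.2} and~\ref{p5.3} then give $\tau_{\mathcal L}=\tau_S=\tau_{\mathcal R}$, and it remains to check that the family $\{xU\cap Ux:U\in\mathcal N_e\}$ is a neighborhood base at each point $x\in S$ in $\tau_S$. Openness of the two-sided unit makes each intersection $xU\cap Ux$ a $\tau_S$-neighborhood of $x$; conversely, any $\tau_S$-neighborhood $O_x$ of $x$ contains some $xU$ with $U\in\mathcal N_e$ (by $\tau_{\mathcal L}=\tau_S$), and hence $xU\cap Ux\subset xU\subset O_x$.

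For $(1)\Rightarrow(2)$, assume $\tau_{\mathcal L\vee\mathcal R}=\tau_S$. Then for every $x\in S$ and $U\in\mathcal N_e$ the basic $\mathcal L\vee\mathcal R$-ball $xU\cap Ux$ is, by construction, a $\tau_{\mathcal L\vee\mathcal R}$-neighborhood of $x$, hence a $\tau_S$-neighborhood of $x$. Since $xU$ and $Ux$ are both supersets of $xU\cap Ux$, they too are $\tau_S$-neighborhoods of $x$, which by definition means that $e$ is both an open left and an open right unit. There is no serious technical obstacle here: the whole proposition is a bookkeeping reduction to Propositions~\ref{p5.2} and~\ref{p5.3}, the only point worth noting being that openness of \emph{both} one-sided neighborhoods $xU$ and $Ux$ drops out from the single hypothesis that their intersection $xU\cap Ux$ is a $\tau_S$-neighborhood of $x$.
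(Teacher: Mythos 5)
Your proof is correct and follows exactly the route the paper intends: the paper states this proposition without proof, merely noting that it is implied by Propositions~\ref{p5.2} and~\ref{p5.3}, and your argument is precisely the bookkeeping needed to make that implication explicit (the key observation in $(1)\Rightarrow(2)$ being that $xU$ and $Ux$ inherit the neighborhood property from their intersection $xU\cap Ux$).
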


Since the topology of any topological semigroup with an open (left, right) unit is generated by a  rotund quasi-uniformity, Corollary~\ref{c4.2} implies:

\begin{corollary} Each topological semigroup $S$ with an open (left or right) unit $e$ is:
\begin{enumerate}
\item  completely regular if and only if $S$ is regular if and only if $S$ is semiregular;
\item functionally Hausdorff if and only if $S$ is Hausdorff if and only if $S$ is functionally Hausdorff.
\end{enumerate}
\end{corollary}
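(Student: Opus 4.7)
The plan is to reduce this corollary to Corollary~\ref{c4.2} by producing a rotund quasi-uniformity on $S$ compatible with its topology. First I would split according to whether the open unit is a left unit or a right unit. If $e$ is an open right unit of $S$, then Proposition~\ref{p5.2} shows that the quasi-uniformity $\mathcal L$ generates the topology of $S$ and that $\mathcal L$ is rotund. Symmetrically, if $e$ is an open left unit, Proposition~\ref{p5.3} gives the same conclusion for the quasi-uniformity $\mathcal R$.

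In either case, the topology of $S$ is generated by a quasi-uniformity admitting a rotund multiplicative base; in particular this base is point-rotund (since a rotund base is point-rotund according to the diagram of implications recorded in Section~\ref{s1} for bases). Hence $S$ falls under the definition of a point-rotund topological space given at the beginning of Section~\ref{s4}.

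Finally, Corollary~\ref{c4.2} applied to the point-rotund space $S$ gives exactly the two equivalences claimed: complete regularity is equivalent to regularity and to semiregularity, and functional Hausdorffness is equivalent to Hausdorffness and to semi-Hausdorffness. Since the work has already been carried out in Sections~\ref{s4} and~\ref{s5}, there is no substantive obstacle; the only point requiring care is noting that "open left or right unit" lands us in one of the two propositions of Section~\ref{s5} guaranteeing rotundness, and that rotundness of a multiplicative base entails point-rotundness, which is what the definition of a point-rotund topological space requires.
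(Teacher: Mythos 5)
Your proposal is correct and is essentially the paper's own argument: the paper derives this corollary in one line by noting that Propositions~5.2 and~5.3 make the topology of $S$ generated by a rotund (hence point-rotund) quasi-uniformity, and then invoking Corollary~4.2. Nothing further is needed.
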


Discussing subinvariant quasi-pseudometrics on topological semigroups, we shall need the notion of a balanced topological monoid, which generalizes the well-known notion of a balanced topological group. Let us recall \cite[p.69]{AT} that a topological group $G$ is called {\em balanced} if its unit $e$ has a neighborhood base $\mathcal N_e$ consisting of neighborhoods $U\subset G$, which are invariant in the sense that $xUx^{-1}=U$ for every $x\in G$. Taking into account that the equality $xUx^{-1}=U$ is equivalent to $xU=Ux$, we define a subset $U\subset S$ of a semigroup $S$ to be {\em invariant} if $xU=Ux$ for all points $x\in S$.

\begin{definition}
A point $e\in S$ of a topological semigroup $S$ is called
\begin{itemize}
\item a {\em balanced point} if it has a neighborhood base $\mathcal N_x$ consisting of open invariant neighborhoods;
\item a {\em balanced open} ({\em left, right}) {\em unit} if $e$ is an open (left, right) unit in $S$ and $e$ is a balanced point in $S$.
\end{itemize}
A topological monoid $S$ will be called a {\em balanced topological monoid} if its unit $e$ is a balanced point in $S$.
\end{definition}

\begin{proposition}\label{p5.7} If $e$ is a balanced open right unit in a topological semigroup $S$, then $S$ has open left and right shifts. If $S$ is a $T_1$-space, then $e$ is an open left unit in $S$, which implies that $S$ is a balanced topological monoid.
\end{proposition}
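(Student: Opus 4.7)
The plan is to prove the two assertions separately, leveraging three ingredients: the open right unit property (which makes $yU$ a neighborhood of $y$ whenever $U$ is a neighborhood of $e$), the balanced hypothesis (which provides a base of invariant open neighborhoods at $e$, so that $Uz = zU$ for each $z \in S$), and the continuity of multiplication (which lets us shrink $U$ so that $xU$ fits inside a prescribed open set).

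For openness of the left shift $\ell_a$, I would take an open set $W \ni x$ and show $y = ax$ is an interior point of $aW$. Continuity of multiplication at $(x, e)$ yields an open $U \ni e$ with $xU \subset W$; then $yU = (ax)U = a(xU) \subset aW$, and $yU$ is a neighborhood of $y$ by the open right unit property. This direction does not actually use invariance. For openness of the right shift $r_a$, I would instead choose an invariant open $U \ni e$ (obtained by shrinking the one from continuity via the balanced base) with $xU \subset W$; invariance gives $(xU)a = x(Ua) = x(aU) = (xa)U = yU$, which is again a neighborhood of $y = xa$ sitting inside $Wa$.

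For the second part, assume $S$ is a $T_1$-space. I would first establish that $ex = x$ for every $x \in S$. Given an arbitrary open neighborhood $W$ of $x$, pick an invariant open $V \ni e$ with $xV \subset W$, using continuity of multiplication together with the balanced base. Since $V$ is invariant and $e \in V$, we have $ex \in Vx = xV \subset W$, so $ex$ lies in every open neighborhood of $x$; in a $T_1$-space this forces $ex = x$. Next, to see that $e$ is an open left unit, fix a neighborhood $U$ of $e$ and any $x \in S$; shrinking $U$ to an invariant open $V \ni e$ contained in $U$ gives $Ux \supset Vx = xV$, and $xV$ is a neighborhood of $x$ by the open right unit property, whence $Ux$ is a neighborhood of $x = ex$. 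Combined with the right unit property and the balanced hypothesis, this makes $e$ a two-sided unit of $S$ and $S$ a balanced topological monoid.

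The main obstacle is the $T_1$ step of showing $ex = x$: unlike the open left shift argument, one cannot simply guarantee that arbitrary open neighborhoods of $x$ contain $ex$ by direct manipulation, and it is precisely the invariance $Vx = xV$ together with the fact that $xV$ can be made to lie in any prescribed open neighborhood of $x$ (via continuity of multiplication and the balanced base) that lets one conclude $ex$ lies in every such neighborhood, after which the $T_1$ axiom collapses $ex$ to $x$.
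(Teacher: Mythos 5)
Your proof is correct and follows essentially the same route as the paper's: both parts rest on combining the open right unit property with the invariance $xV=Vx$ of basic neighborhoods of $e$, and your $T_1$ step is just the contrapositive of the paper's argument that a putative $ex\ne x$ would give a neighborhood $V_ex=xV_e$ of $x$ not containing $x$. The only cosmetic differences are that you argue the open-shift claims directly from the open right unit property rather than citing Propositions~\ref{p5.2} and \ref{p5.3}.
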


\begin{proof} Assume that $e$ is a balanced open right unit in $S$.
By Proposition~\ref{p5.2}, the topological semigroup $S$ has open left shifts. Next, we show that $S$ has open right shifts. Given any point $a\in S$ and open set $U\subset S$ we should prove that the set $Ua$ is open in $S$. Fix any point $v\in Ua$ and find a point $u\in U$ such that $v=ua$. The continuity of the left shift $\ell_u:S\to S$  implies that the set $\ell_u^{-1}(U)=\{x\in S:ux\in U\}$ is an open neighborhood of $e$ in $S$. Since $e$ is a balanced point in $S$, there is an open (invariant) neighborhood $V_e\subset \ell^{-1}_u(U)$ of $e$ such that $V_ea=aV_e$. Taking into account that the left shift $\ell_v:S\to S$ is open, we conclude that the set $vV_e=uaV_e=uV_ea$ is open. Since $vV_e=uV_ea\subset Ua$, we see that $v$ is an interior point of the set $Ua$. So, $Ua$ is open in $S$.

Finally, assume that $S$ is a $T_1$-space. We claim that $ex=x$ for every $x\in X$. Assuming the converse, we can find a neighborhood $U_{ex}\subset S$ of the point $ex\ne x$, which does not contain the point $x$. By the continuity of the right shift $r_x:S\to S$, there exists a neighborhood $V_e\subset S$ of $e$ such that $ V_ex\subset U_{ex}$. Since $e$ is balanced, we can additionally assume that $V_ex=xV_e$, which implies that $V_ex=xV_e$ is a neighborhood of $x=xe$. But this is not possible as $V_ex\subset U_e$ does not contain the point $x$. So, $e$ is a unit in $S$. Since $S$ has open right shifts, the unit $e$ is an open left unit in $S$ according to Proposition~\ref{p5.3}.
\end{proof}

\section{Subinvariant quasi-pseudometrics on topological monoids}\label{s6}

In this section we shall apply (the full strength of) Theorem~\ref{main} to construct (left, right) subinvariant quasi-pseudometrics on topological semigroups possessing an open (right, left) unit.

A premetric $d:S\times S\to[0,\infty)$ on a semigroup $S$ is called
\begin{itemize}
\item {\em left-invariant} if $d(zx,zy)=d(x,y)$ for any points $x,y,z\in S$;
\item {\em right-invariant} if $d(xz,yz)=d(x,y)$ for any points $x,y,z\in S$;
\item {\em invariant} if $d$ is left-invariant and right invariant;   \smallskip

\item {\em left-subinvariant} if $d(zx,zy)\le d(x,y)$ for any points $x,y,z\in S$;
\item {\em right-subinvariant} if $d(xz,yz)\le d(x,y)$ for any points $x,y,z\in S$;
\item {\em subinvariant} if $d$ is left-subinvariant and right-subinvariant.
\end{itemize}

\begin{theorem}\label{t6.1} Let $S$ be a topological semigroup with an open right unit $e$. For every open neighborhood $U\subset S$ of $e$ there is a premetric $d:S\times S\to[0,1]$ on $S$ such that
\begin{enumerate}
\item $\overline{d}=\overline{d}^\circ\le d$;
\item $d$ is a left-subinvariant $\overline{\dist}$-continuous quasi-pseudometric with open balls on $S$;
\item $\overline{d}=\overline{d}^\circ$ is a left-subinvariant right-continuous and $\overline{\dist}$-continuous quasi-pseudometric on $S$;
\item $B_d(x,1)\subset xU$ and $B_{\bar{d}}(x,1)\subset \overline{xU}^\circ$ for every $x\in S$.
\item If $e$ is a balanced point in $S$, then the quasi-pseudometrics $d$ and $\overline{d}=\overline{d}^\circ$ are subinvariant.
\end{enumerate}
\end{theorem}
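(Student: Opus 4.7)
The plan is to feed the given neighborhood $U$ into Theorem~\ref{main} applied to the left quasi-uniformity $\mathcal L$ of $S$. Let $\mathcal N_e$ denote the family of open neighborhoods of the right unit $e$ and write $E_V=\{(x,y)\in S\times S:y\in xV\}$ for each $V\in\mathcal N_e$. By Proposition~\ref{p5.2}, the family $\mathcal B=\{E_V:V\in\mathcal N_e\}$ is a rotund base of $\mathcal L$ and $\tau_{\mathcal L}$ coincides with the topology of $S$. Multiplicativity of $\mathcal B$ is immediate from the identity $E_V\circ E_{V'}=E_{VV'}$ together with the fact that $VV'$ is open (because left shifts are open by Proposition~\ref{p5.2}) and contains $e$. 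Since rotund bases are point-rotund, set-rotund and $\Delta$-rotund, all hypotheses of Theorem~\ref{main} are met.

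Applying Theorem~\ref{main} to $E_U\in\mathcal L$ yields a family $\{V_r\}_{r\in\mathbb Q_2^1}\subset\mathcal B$; write $V_r=E_{W_r}$ with $W_r\in\mathcal N_e$ and $W_1=S$, and consider
\[
d(x,y)=\inf\{r\in\mathbb Q_2^1:(x,y)\in V_r\}=\inf\{r\in\mathbb Q_2^1:y\in xW_r\}.
\]
Items (5)--(10) of Theorem~\ref{main} translate verbatim into items (1)--(3) of Theorem~\ref{t6.1} apart from left-subinvariance: $d$ is a $\mathcal L$-uniform quasi-pseudometric with open balls, $\overline{d}=\overline{d}^\circ$ by point-rotundness, and both $d$ and $\overline{d}$ are right-continuous and $\overline{\dist}$-continuous. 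The inclusions in (4) also come for free: $[d]_{<1}\subset E_U$ gives $B_d(x,1)\subset xU$, and $[\overline{d}^\circ]_{<1}\subset\overline{E_U}^\circ$ together with $\overline{d}=\overline{d}^\circ$ gives $B_{\overline{d}}(x,1)\subset\overline{xU}^\circ$.

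Left-subinvariance I would read off directly from the formula for $d$. If $y\in xW_r$ then $zy\in zxW_r$, so $d(zx,zy)\le r$ and taking the infimum over $r$ gives $d(zx,zy)\le d(x,y)$. For $\overline{d}$, the continuity of the left shift $\ell_z:S\to S$ yields $\ell_z(\overline{xW_r})\subset\overline{\ell_z(xW_r)}=\overline{zxW_r}$, so $y\in\overline{xW_r}$ forces $zy\in\overline{zxW_r}$, hence $\overline{d}(zx,zy)\le r$ and then $\overline{d}(zx,zy)\le\overline{d}(x,y)$.

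For part (5) assume $e$ is balanced. Then $e$ admits a base of open invariant neighborhoods, and the subfamily $\mathcal B^{\mathrm{inv}}=\{E_V:V\in\mathcal N_e,\ xV=Vx\ \forall x\in S\}$ is still a multiplicative rotund base of $\mathcal L$: invariance is preserved under products since $x(VV')=(xV)V'=(Vx)V'=V(V'x)=(VV')x$. Shrinking $U$ to an invariant open subneighborhood $U_0\subset U$ of $e$ and reapplying Theorem~\ref{main} with the base $\mathcal B^{\mathrm{inv}}$ forces every $W_r$ to be invariant, while the weaker inclusion $B_d(x,1)\subset xU_0\subset xU$ still delivers (4). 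Right-subinvariance of $d$ is then immediate: $y\in xW_r$ implies $yz\in xW_rz=xzW_r$, hence $d(xz,yz)\le r$; for $\overline{d}$, continuity of the right shift $r_z$ yields $\overline{xW_r}\cdot z\subset\overline{xW_rz}=\overline{xzW_r}$, giving $\overline{d}(xz,yz)\le\overline{d}(x,y)$. Combined with Step~3 this makes $d$ and $\overline{d}$ subinvariant. The only real subtlety I anticipate is Step~4, where one must be careful that the passage to $U_0$ and to the invariant base does not spoil the containment $B_d(x,1)\subset xU$ or the $\overline{\dist}$-continuity conclusions; both survive because Theorem~\ref{main} is applied with a rotund multiplicative base in each case.
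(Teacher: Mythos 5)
Your proof is correct and follows essentially the same route as the paper: build the rotund multiplicative base $\{E_V:V\in\mathcal N_e\}$ of the left quasi-uniformity $\mathcal L$, feed $E_U$ into Theorem~\ref{main}, and verify left-subinvariance (and, in the balanced case, right-subinvariance) directly from $d(x,y)=\inf\{r:y\in xW_r\}$ and $\overline d(x,y)=\inf\{r:y\in\overline{xW_r}\}$ -- the paper's only deviation is that it selects the invariant base from the outset when $e$ is balanced instead of re-applying Theorem~\ref{main} with a shrunken $U_0$, which is an inessential difference. Two cosmetic slips: the triangle inequality for $\overline d$ in item (3) needs Theorem~\ref{main}(11) (via $\Delta$-rotundness), which your citation range (5)--(10) omits, and your aside that $d$ itself is right-continuous is not delivered by Theorem~\ref{main} (only $\overline{\dist}$-continuity of $d$ is), though Theorem~\ref{t6.1} does not claim it either.
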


\begin{proof} Let $\mathcal N_e$ be the family of all open neighborhoods of the right unit $e$ in $S$ and $\mathcal{IN}_e=\{U\in\mathcal N_e:\forall x\in S\;(xU=Ux)\}$ be the family of open invariant neighborhoods of $e$. Put
$$\mathcal N_e'=\begin{cases}
\mathcal{IN}_e&\mbox{if $e$ is a balanced point in $S$},\\
\mathcal N_e&\mbox{otherwise},
\end{cases}
$$and observe that $\mathcal N'_e$ is a multiplicative neighborhood base at $e$ (the multiplicativity of $\mathcal N'_e$ means that for any neighborhoods $U,V\in\mathcal N_e'$ we get $UV\in\mathcal N_e'$).

Every neighborhood $U\in\mathcal N'_e$ induces the entourage $\mathbb E(U)=\{(x,y)\in S\times S:y\in xU\}$.
We claim that the family $\mathcal B=\{\mathbb E(U):U\in\mathcal N'_e\}$ of such entourages is a rotund multiplicative base of the left quasi-uniformity $\mathcal L$ on $S$. Indeed, for any non-empty subset $A\subset S$ and entourage $\mathbb E(V)\in\mathcal B$ we get ${B}(\overline{A};\mathbb E(V))=\overline{A}V\subset\overline{AV}=\overline{B}(A;E(V))$.
So, the base $\mathcal B$ is rotund.

It will be convenient for neighborhoods $U,V,W\in\mathcal N_e$ to denote the entourages $\mathbb E(U)$, $\mathbb E(V)$, $\mathbb E(W)$ by $\mathbb U$, $\mathbb V$, $\mathbb W$, respectively.

By Theorem~\ref{main}, for every open neighborhood $U\in\mathcal N_e$ there exists a family of entourages $\{\mathbb V_q\}_{q\in\IQ_2^1}$ such that
\begin{enumerate}
\item[(a)] $\mathbb V_q\circ \mathbb V_r\subset \mathbb V_{q+r}\subset \mathbb U\subset \mathbb V_1=S\times S$ for any rational numbers $q,r\in\IQ_2$ with $q+r<1$.
\item[(b)] $\overline{\mathbb V}_p^\circ\circ\overline{\mathbb V}^\circ_q\subset \overline{\mathbb V}_p\circ\overline{\mathbb V}_q\subset \overline{\mathbb V}^\circ_{r}\subset \overline{\mathbb U}^\circ$ for any rational numbers $p,q,r\in\IQ_2$ with $p+q<r$.
\end{enumerate}
Moreover, the premetric $$d:S\times S\to[0,1],\;\;\;d(x,y)=\inf\{r\in\IQ_2^1:(x,y)\in \mathbb V_r\}$$ and its (semi)regularization
have the following properties:
\begin{enumerate}
\item[(c)] $\overline{d}=\overline{d}^\circ\le d$;
\item[(d)] $d$ is an $\mathcal L$-uniform $\overline{\dist}$-continuous quasi-pseudometric with open balls such that $[d\/]_{<1}\subset \mathbb U$;
\item[(e)] $\overline{d}=\overline{d}^\circ$ is a $\mathcal L$-uniform right-continuous $\overline{\dist}$-continuous quasi-pseudometric on $S$ such that $[\overline{d}]_{<1}=\overline{\mathbb U}^\circ$;
\item[(f)] $\overline{d}(x,y)=\inf\{r\in\IQ_2^1:y\in \overline{xV_r}\}$ for any $x,y\in S$.
\end{enumerate}

The conditions (c)--(f) and the definitions of the entourages $\IU=\mathbb E(U)$ and $\overline{\IU}^\circ=\bigcup_{x\in S}\{x\}\times \overline{B}^\circ\kern-2pt(x;U)$ imply that the quasi-pseudometrics $d$ and $\overline{d}=\overline{d}^\circ$ have all the properties from the statements (1)--(4) except for the left-subinvariance.

Now we prove that the quasi-pseudometric $d$ is left-subinvariant (and right-subinvariant if $e$ is a balanced point in $S$). To see that $d$ is left-subinvariant, if suffices to check that $d(zx,zy)\le d(x,y)+\e$ for every points $x,y,z\in S$ and every real number $\e>0$. This inequality is trivial if $d(x,y)+\e\ge 1$. So, we assume that $d(x,y)+\e<1$. This this case the definition of the premetric $d$ guarantees that $(x,y)\in\mathbb V_r$ for some number $r\in\IQ_2$ with $r<d(x,y)+\e$. Find an open neighborhood $V_r\in\mathcal N_e$ of $e$ such that $\mathbb V_r=\mathbb E(V_r)$. Then $(x,y)\in\mathbb V_r$ is equivalent to $y\in x V_r$, which implies $zy\in zxV_r$, $(zx,zy)\in \mathbb E(V_r)=\mathbb V_r$, and finally $d(zx,zy)\le r<d(x,y)+\e$. If the right unit $e$ is balanced, then the set $V_r\in\mathcal B$ is invariant and hence $y\in xV_r$ implies $yz\in xV_rz=xzV_r$ and hence $d(xz,yz)\le r<d(x,y)+\e$.

Finally, we prove that the quasi-pseudometric $\overline{d}=\overline{d}^\circ$ is left-subinvariant (and right-subinvariant if $e$ is a balanced point in $S$). To see that $\overline{d}$ is left-subinvariant, if suffices to check that $\overline{d}(zx,zy)\le \overline{d}(x,y)+\e$ for every points $x,y,z\in S$ and every real number $\e>0$. This inequality is trivial if  $\overline{d}(x,y)+\e\ge 1$. So, we assume that $\overline{d}(x,y)+\e<1$. In this case the property (f) of the premetric $\overline{d}$ guarantees that $(x,y)\in\overline{\mathbb V}_r$ for some number $r\in\IQ_2$ with $r<\overline{d}(x,y)+\e$. Find an open neighborhood $V_r\in\mathcal N_e$ of $e$ such that $\mathbb V_r=\mathbb E(V_r)$. Then $(x,y)\in\overline{\mathbb V}_r$ is equivalent to $y\in \overline{x V_r}$. The continuity of the left shift $\ell_z:S\to S$ implies that $zy\in z\overline{xV_r}\subset \overline{zxV_r}$ and hence $\overline{d}(zx,zy)\le r<\overline{d}(x,y)+\e$. If the right unit $e$ is balanced, then the entourage $V_r\in\mathcal B$ is invariant. By the continuity of the right shift $r_z:S\to S$, we get $yz\in \overline{xV_r}z\subset \overline{xV_rz}=\overline{xzV_r}$ and hence $\overline{d}(xz,yz)\le r<\overline{d}(x,y)+\e$ by the property (f) of the premetric $\overline{d}$.
\end{proof}

For a topological space $X$ by $\chi(X)$ we denote the {\em character of $X$} i.e., the smallest cardinal $\kappa$ such that each point $x\in X$ has a neighborhood base $\mathcal N_x$  of cardinality
$|\mathcal N_x|\le\kappa$. In \cite{Koper} Kopperman proved that the topology of any topological monoid with open left shifts can be generated by a family of left-subinvariant quasi-pseudometrics. The following corollary of Theorem~\ref{t6.1} can be considered as a generalization of this Kopperman's result.

\begin{corollary}\label{c6.2} The topology of any (semiregular) topological semigroup $S$ with an open right unit is generated by a family $\mathcal D$ of  left-subinvariant $\overline{\dist}$-continuous (and right-continuous) quasi-pseudometrics of cardinality $|\mathcal D|\le \chi(S)$.
\end{corollary}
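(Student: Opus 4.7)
The plan is to fix a neighborhood base $\mathcal N_e$ at the open right unit $e$ of cardinality $|\mathcal N_e|\le\chi(S)$ and apply Theorem~\ref{t6.1} to each member of $\mathcal N_e$. For every $U\in\mathcal N_e$, Theorem~\ref{t6.1}(2),(4) furnishes a left-subinvariant $\overline{\dist}$-continuous quasi-pseudometric $d_U:S\times S\to[0,1]$ with open balls satisfying $B_{d_U}(x,1)\subset xU$ for every $x\in S$. I would then set $\mathcal D=\{d_U:U\in\mathcal N_e\}$, which has cardinality at most $\chi(S)$.

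To see that $\mathcal D$ generates the topology of $S$, I would fix a point $x\in S$ and an open neighborhood $W$ of $x$. By Proposition~\ref{p5.2}, the left quasi-uniformity $\mathcal L$ generates the topology of $S$, so the sets $\{xU:U\in\mathcal N_e\}$ form a neighborhood base at $x$. Choose $U\in\mathcal N_e$ with $xU\subset W$. Since $d_U(x,x)=0$ by axiom (M1) and the ball $B_{d_U}(x,1)$ is open, one gets $x\in B_{d_U}(x,1)\subset xU\subset W$. This shows that $\{B_{d_U}(x,\e):d_U\in\mathcal D,\;x\in S,\;\e>0\}$ is even a base of the topology of $S$, hence a subbase, which is what is required.

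For the parenthetical (semiregular) strengthening, I would replace $d_U$ by its modification $\overline{d}_U=\overline{d}_U^\circ$, which by Theorem~\ref{t6.1}(3),(4) is a left-subinvariant right-continuous and $\overline{\dist}$-continuous quasi-pseudometric with open balls satisfying $B_{\overline{d}_U}(x,1)\subset\overline{xU}^\circ$ for every $x\in S$. Given an open neighborhood $W$ of $x$, the semiregularity of $S$ yields an open neighborhood $V$ of $x$ with $\overline{V}^\circ\subset W$; then using Proposition~\ref{p5.2} once more, choose $U\in\mathcal N_e$ with $xU\subset V$. Since $\overline{d}_U(x,x)=0$ and $B_{\overline{d}_U}(x,1)$ is open, this gives $x\in B_{\overline{d}_U}(x,1)\subset\overline{xU}^\circ\subset\overline{V}^\circ\subset W$, so the family $\overline{\mathcal D}=\{\overline{d}_U:U\in\mathcal N_e\}$ generates the topology of $S$ and has the required cardinality.

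The argument is essentially an assembly of Theorem~\ref{t6.1} with Proposition~\ref{p5.2}, so no serious obstacle is expected; the only point deserving care is the insertion of the semiregularity step so that one may pass from a neighborhood $W$ to a neighborhood $V$ with $\overline{V}^\circ\subset W$ \emph{before} invoking the $\mathcal L$-base to produce $U\in\mathcal N_e$ with $xU\subset V$.
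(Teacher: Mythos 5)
Your proposal is correct and follows essentially the same route as the paper: apply Theorem~\ref{t6.1} to each member of a neighborhood base at $e$ of cardinality $\le\chi(S)$, take $\mathcal D=\{d_U\}$ (resp.\ the regularizations $\overline{d}_U$ in the semiregular case), and verify generation of the topology via the inclusions $B_{d_U}(x,1)\subset xU$ and $B_{\overline{d}_U}(x,1)\subset\overline{xU}^\circ$. The only difference is that you spell out the appeal to Proposition~\ref{p5.2} and the ordering of the semiregularity step, which the paper leaves implicit.
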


\begin{proof} Fix a neighborhood base $\mathcal N_e$ at the right unit $e$ of cardinality $|\mathcal N_e|\le\chi(S)$. By Theorem~\ref{t6.1}, for every neighborhood $U\in\mathcal N_e$ there exist a left-subinvariant $\overline{\dist}$-continuous quasi-pseudometric $dU$ with open balls on $S$ such that its regularization
$\overline{dU}=\overline{dU}^\circ$ is a left-subinvariant $\dist$-continuous and right-continuous quasi-pseudometric such that $B_{dU}(x;1)\subset xU$ and $B_{\overline{dU}}(x;1)\subset\overline{xU}^\circ$ for all $x\in S$. Then the family of left-subinvariant $\overline{\dist}$-continuous quasi-pseudometrics $\mathcal D=\{dU:U\in\mathcal N_e\}$ has cardinality $|\mathcal D|\le|\mathcal N_e|\le\chi(S)$ and generates the topology of $S$.

If the space $S$ is semiregular, then for every neighborhood $O_x\subset S$ we can find a neighborhood $U_x\subset S$ of $x$ such that $\overline{U}^\circ_x\subset O_x$ and a neighborhood $U\in\mathcal N_e$ such that $xU\subset U_x$. Then $B_{\overline{dU}}(x;1)\subset\overline{xU}^\circ\subset \overline{U}^\circ_x\subset O_x$ and hence the family $\overline{\mathcal D}=\{\overline{dU}:U\in\mathcal N_e\}$ generates the topology of $S$, has cardinality $|\overline{\mathcal D}|\le|\mathcal N_e|\le\chi(\U)$, and consists of left-subinvariant right-continuous $\overline{\dist}$-continuous quasi-pseudometrics.
\end{proof}

\begin{corollary}\label{c6.3} The topology of any first countable (semiregular) topological semigroup $S$ with an open right unit is generated by a left-subinvariant  $\overline{\dist}$-continuous (and right-continuous) quasi-pseudometric.
\end{corollary}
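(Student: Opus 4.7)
The plan is to deduce Corollary~\ref{c6.3} from Corollary~\ref{c6.2} by amalgamating a countable family of quasi-pseudometrics into a single one, following the pattern established in the proof of Corollary~\ref{c3.2}. Since $S$ is first countable, $\chi(S)\le\omega$, so Corollary~\ref{c6.2} supplies a countable family $\{d_n\}_{n\in\IN}$ of left-subinvariant $\overline{\dist}$-continuous quasi-pseudometrics generating the topology of $S$ (all of which may additionally be chosen right-continuous in case $S$ is semiregular).

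The amalgamation I would use is
$$d:S\times S\to[0,1],\qquad d(x,y)=\sup_{n\in\IN}\min\{d_n(x,y),\,2^{-n}\}.$$
Since each $\min\{d_n,2^{-n}\}$ is a left-subinvariant quasi-pseudometric and this class is stable under suprema, $d$ is a left-subinvariant quasi-pseudometric. For topology generation, the key identity is
$$B_d(x,\e)=\bigcap_{n:\,2^{-n}\ge\e}B_{d_n}(x,\e),$$
a \emph{finite} intersection of open balls, which is an open neighbourhood of $x$; conversely, any neighbourhood of $x$ contains some $B_{d_n}(x,\delta)$, and hence $B_d(x,\min\{\delta,2^{-n}\})$. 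In the semiregular case, right-continuity of $d$ follows because $y\mapsto d(x_0,y)$ is the uniform limit (as $N\to\infty$) of the continuous functions $\max_{n\le N}\min\{d_n(x_0,y),2^{-n}\}$, using $\min\{d_n,2^{-n}\}\le 2^{-n}$.

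The main obstacle will be verifying that $d$ (and its regularisation $\overline{d}$) remains $\overline{\dist}$-continuous, since $\overline{\dist}$-continuity is a fairly delicate closure-of-ball condition that need not obviously pass through arbitrary sup-combinations. My strategy for this step would be to invoke Proposition~\ref{p1.2n}: since $d$ has open balls, it suffices to show $\overline{d}_A=\overline{d}^\circ_A$ for every non-empty $A\subset S$. Fix such an $A$ and $x\in S$ with $\overline{d}_A(x)<r$; choose rationals $\e,q$ with $\overline{d}_A(x)<\e<\e+q<r$. Then $x\in\overline{B_d(A,\e)}\subset\bigcap_{n\le N(\e)}\overline{B_{d_n}(A,\e)}$, where $N(\e)=\max\{n:2^{-n}\ge\e\}$. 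By the $\overline{\dist}$-continuity of each $d_n$ (Corollary~\ref{c1.3n}), the set $W_n=\bigcup_{\delta<\e+q}\overline{B_{d_n}(A,\delta)}^\circ$ is an open neighbourhood of $x$ for each $n\le N(\e)$, so $W=\bigcap_{n\le N(\e)}W_n$ is an open neighbourhood of $x$. I would then show $W\subset\overline{B_d(A,r)}^\circ$ by combining the local entourage estimate $V_{\e+q}=V_\e V_q$ from Theorem~\ref{main}(1) (applied individually to each $d_n$) with the left-translation argument of Theorem~\ref{t6.1} to absorb the extra $q$-slack; this yields $\overline{d}^\circ_A(x)\le r$, as required. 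An entirely parallel argument handles the regularisation $\overline{d}=\overline{d}^\circ$, whose right-continuity and $\overline{\dist}$-continuity then give the semiregular case.
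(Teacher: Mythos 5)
Your overall strategy is the paper's own: Corollary~\ref{c6.2} yields a countable generating family $\{d_n\}_{n\in\IN}$, and the amalgam $d=\max_{n\in\IN}\min\{d_n,2^{-n}\}$ (your supremum is attained, so it is literally the formula in the paper) is checked to inherit all the required properties. Your verifications of the quasi-pseudometric inequality, left-subinvariance, topology generation via the finite-intersection identity $B_d(x,\e)=\bigcap_{2^{-n}\ge\e}B_{d_n}(x,\e)$, and right-continuity by uniform convergence are all correct.

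The $\overline{\dist}$-continuity step, however, contains a step that fails. The inclusion $B_d(A,\e)\subset\bigcap_{n\le N(\e)}B_{d_n}(A,\e)$ is strict in general: membership in $B_d(A,\e)$ requires a \emph{single} $a\in A$ with $d_n(a,y)<\e$ for all $n\le N(\e)$ simultaneously, whereas the intersection permits a different witness $a_n$ for each $n$. Hence your open set $W=\bigcap_{n\le N(\e)}W_n$ is not contained in $\overline{B_d(A,r)}^\circ$: already in the group $(\IR^2,+)$ with the coordinate pseudometrics $d_1,d_2$ and $A=\{(0,0),(10,10)\}$, the point $(10,0)$ lies in $\bigcap_{n}\overline{B_{d_n}(A,\e)}^\circ$ for all small $\e$ while $d_A((10,0))=\tfrac14$. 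Passing to the individual $d_n$ discards exactly the common-witness information you need, and no amount of $q$-slack recovers it. The repair is shorter and bypasses the $d_n$ entirely: $d$ is $\mathcal L$-uniform, because $[d]_{<\delta}=\bigcap_{n\le N(\delta)}[d_n]_{<\delta}$ is a finite intersection of entourages of $\mathcal L$. So given $\overline{d}_A(x)<\e<r$, choose an open neighborhood $V$ of $e$ with $\{(z,y):y\in zV\}\subset[d]_{<r-\e}$; the triangle inequality gives $B_d(A,\e)V\subset B_d(A,r)$, and therefore
$$xV\subset\overline{B_d(A,\e)}\,V\subset\overline{B_d(A,\e)V}\subset\overline{B_d(A,r)},$$
the middle inclusion by continuity of right shifts. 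Since $xV$ is open (open left shifts) and contains $x=xe$, this gives $x\in\overline{B_d(A,r)}^\circ$, hence $\overline{d}_A=\overline{d}^\circ_A$, and Proposition~\ref{p1.2n} together with Corollary~\ref{c1.6n} finishes the claim for $d$ and for $\overline{d}=\overline{d}^\circ$. The same argument, applied to the amalgam of the right-continuous quasi-pseudometrics $\overline{d_n}$ (which are likewise $\mathcal L$-uniform with open balls), settles the semiregular case.
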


\begin{proof} By Corollary~\ref{c6.2}, the topology of $S$ can be generated by a countable family $\mathcal D=\{d_n\}_{n\in\w}$ of left-subinvariant $\overline{\dist}$-continuous (and right-continuous) quasi-pseudometrics with open balls. It can be shown that $$d=\max_{n\in\IN}\min\{d_n,2^{-n}\}:S\times S\to[0,\infty)$$ is a left-subinvariant $\overline{\dist}$-continuous (and right-continuous)  quasi-pseudometric with open balls generating the topology of $S$. \end{proof}

Replacing the left quasi-uniformity $\mathcal L$ in the proof of Theorem~\ref{t6.1} by the right quasi-uniformity $\mathcal R$ we can prove a ``right'' version of Theorem~\ref{t6.1}.

\begin{theorem}\label{t6.4} Let $S$ be a topological semigroup with an open left unit $e$. For every open neighborhood $U\subset S$ of $e$ there is a premetric $d:S\times S\to[0,1]$ on $S$ such that
\begin{enumerate}
\item $\overline{d}=\overline{d}^\circ\le d$;
\item $d$ is a right-subinvariant $\overline{\dist}$-continuous quasi-pseudometric with open balls on $S$;
\item $\overline{d}=\overline{d}^\circ$ is a right-subinvariant right-continuous $\overline{\dist}$-continuous quasi-pseudometric on $S$;
\item $B_d(x,1)\subset Ux$ and $B_{\overline{d}}(x,1)\subset\overline{Ux}^\circ$ for every $x\in S$.
\item If $e$ is a balanced point in $S$, then the quasi-pseudometrics $d$ and $\overline{d}=\overline{d}^\circ$ are subinvariant.
\end{enumerate}
\end{theorem}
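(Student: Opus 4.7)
The plan is to mirror the proof of Theorem~\ref{t6.1}, swapping the roles of the left and right quasi-uniformities. Let $\mathcal N_e$ denote the family of open neighborhoods of the open left unit $e$ and set $\mathcal N'_e=\mathcal{IN}_e=\{U\in\mathcal N_e:\forall x\in S\;(xU=Ux)\}$ if $e$ is balanced in $S$, and $\mathcal N'_e=\mathcal N_e$ otherwise; in either case $\mathcal N'_e$ is a multiplicative neighborhood base at $e$. For $U\in\mathcal N'_e$ I would consider the entourage $\mathbb E(U)=\{(x,y)\in S\times S:y\in Ux\}$, giving $B(x;\mathbb E(U))=Ux$, and show that $\mathcal B=\{\mathbb E(U):U\in\mathcal N'_e\}$ is a multiplicative base of the right quasi-uniformity $\mathcal R$, which by Proposition~\ref{p5.3} generates the topology of $S$. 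Rotundness of $\mathcal B$ follows from the chain $B(\overline{A};\mathbb E(V))=V\overline{A}\subset\overline{VA}=\overline{B}(A;\mathbb E(V))$ for every non-empty $A\subset S$ and $V\in\mathcal N'_e$, where the middle inclusion uses the continuity of each left shift $\ell_v:S\to S$ (available because topological semigroups have jointly continuous multiplication).

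With the rotund multiplicative base in hand, Theorem~\ref{main} applied to the prescribed entourage $\mathbb U=\mathbb E(U)\in\mathcal R$ produces a family $\{\mathbb V_q\}_{q\in\IQ_2^1}\subset\mathcal B$ together with the premetric $d(x,y)=\inf\{r\in\IQ_2^1:(x,y)\in\mathbb V_r\}$ and its regularization $\overline{d}=\overline{d}^\circ$. Items~(5)--(12) of Theorem~\ref{main} translate directly into conclusions (1)--(3) of Theorem~\ref{t6.4}, since rotundness entails $\Delta$-rotundness, set-rotundness and point-rotundness. For (4), writing $\mathbb V_r=\mathbb E(V_r)$, the inclusions $[d]_{<1}\subset\mathbb U$ and $[\overline{d}]_{<1}\subset\overline{\mathbb U}^\circ$ from Theorem~\ref{main}(5,7) give $B_d(x,1)\subset Ux$ and $B_{\overline{d}}(x,1)\subset\overline{Ux}^\circ$ for every $x\in S$.

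For right-subinvariance in conclusion (2), I would verify $d(xz,yz)\le d(x,y)+\e$ for every $\e>0$: choose $r\in\IQ_2$ with $r<d(x,y)+\e$ and $(x,y)\in\mathbb V_r$; then $y\in V_rx$, so $yz\in V_rxz$, whence $(xz,yz)\in\mathbb E(V_r)=\mathbb V_r$ and $d(xz,yz)\le r$. The corresponding claim for $\overline{d}$ in (3) uses the formula $\overline{d}(x,y)=\inf\{r\in\IQ_2^1:y\in\overline{V_rx}\}$ from Theorem~\ref{main}(7) together with continuity of the right shift $r_z:S\to S$, which gives $yz\in r_z(\overline{V_rx})\subset\overline{r_z(V_rx)}=\overline{V_rxz}$. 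The balanced case (5) follows by the same pattern: if $e$ is balanced then each $V_r\in\mathcal N'_e$ is invariant, so $y\in V_rx$ implies $zy\in zV_rx=V_rzx$, and its closure analogue uses continuity of $\ell_z:S\to S$ together with the identity $z\overline{V_rx}\subset\overline{zV_rx}=\overline{V_rzx}$. I expect no genuine obstacle in this adaptation; the only point requiring care is the symmetric bookkeeping of which shift (left or right) supplies each continuity argument, and the verification that the ``right'' entourages $\mathbb E(U)$ compose correctly, both of which are routine once the choice of base is made.
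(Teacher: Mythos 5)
Your proposal is correct and is exactly the adaptation the paper intends: the paper proves Theorem~\ref{t6.4} by declaring that one replaces the left quasi-uniformity $\mathcal L$ in the proof of Theorem~\ref{t6.1} by the right quasi-uniformity $\mathcal R$, and you have carried out that replacement faithfully (right base entourages $\mathbb E(U)$ with $B(x;\mathbb E(U))=Ux$, rotundness via continuity of left shifts, subinvariance via the shift arguments with the roles of $\ell_z$ and $r_z$ interchanged). The only cosmetic slip is attributing $[\overline{d}]_{<1}\subset\overline{\mathbb U}^\circ$ to Theorem~\ref{main}(7) alone, whereas it follows from (8) together with the equality $\overline{d}=\overline{d}^\circ$ guaranteed by point-rotundness; this does not affect correctness.
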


Modifying the proofs of Corollaries~\ref{c6.2} and \ref{c6.3}, we can deduce the following corollaries from Theorem~\ref{t6.4}.

\begin{corollary}\label{c6.5} The topology of any (semiregular) topological semigroup $S$ with an open left unit is generated by a family $\mathcal D$ of  right-subinvariant $\overline{\dist}$-continuous (and right-continuous) quasi-pseudometrics of cardinality $|\mathcal D|=\chi(S)$.
\end{corollary}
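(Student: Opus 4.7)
The plan is to mirror the proof of Corollary~\ref{c6.2} with the roles of left and right interchanged, applying the ``right'' analogue Theorem~\ref{t6.4} at each point where Corollary~\ref{c6.2} invoked Theorem~\ref{t6.1}. Concretely, I would first fix a neighborhood base $\mathcal N_e$ at the open left unit $e$ of cardinality $|\mathcal N_e|\le\chi(S)$, which exists by the definition of character.

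For each $U\in\mathcal N_e$, Theorem~\ref{t6.4} supplies a right-subinvariant $\overline{\dist}$-continuous quasi-pseudometric $dU:S\times S\to[0,1]$ with open balls such that $B_{dU}(x,1)\subset Ux$ for every $x\in S$, together with a right-subinvariant right-continuous $\overline{\dist}$-continuous regularization $\overline{dU}=\overline{dU}^\circ$ satisfying $B_{\overline{dU}}(x,1)\subset \overline{Ux}^\circ$. Set $\mathcal D=\{dU:U\in\mathcal N_e\}$, so that $|\mathcal D|\le|\mathcal N_e|\le\chi(S)$. To see that $\mathcal D$ generates the topology of $S$, recall that by Proposition~\ref{p5.3} the openness of the left unit $e$ means that each set $Ux$ with $U\in\mathcal N_e$ is a neighborhood of $x$, and these sets form a neighborhood base at $x$. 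Hence for any neighborhood $O_x$ of a point $x\in S$ there is $U\in\mathcal N_e$ with $Ux\subset O_x$, and then the open ball $B_{dU}(x,1)\subset Ux\subset O_x$, which shows that the quasi-pseudometric topology generated by $\mathcal D$ refines (and hence coincides with) the topology of $S$.

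For the semiregular case I would switch from $\mathcal D$ to $\overline{\mathcal D}=\{\overline{dU}:U\in\mathcal N_e\}$. Given any $x\in S$ and any neighborhood $O_x$, semiregularity yields a neighborhood $W_x$ of $x$ with $\overline{W_x}^\circ\subset O_x$, and then a neighborhood $U\in\mathcal N_e$ with $Ux\subset W_x$; consequently $B_{\overline{dU}}(x,1)\subset\overline{Ux}^\circ\subset\overline{W_x}^\circ\subset O_x$. Thus $\overline{\mathcal D}$ generates the topology of $S$, has cardinality at most $\chi(S)$, and consists of right-subinvariant right-continuous $\overline{\dist}$-continuous quasi-pseudometrics, which is the stronger conclusion promised in the parenthetical.

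There is essentially no genuine obstacle: once Theorem~\ref{t6.4} is on the table, the argument is a routine transcription of Corollary~\ref{c6.2} with ``left'' and ``right'' swapped. The only point requiring any care is to confirm that all four properties claimed of the families $\mathcal D$ and $\overline{\mathcal D}$—right-subinvariance, $\overline{\dist}$-continuity, (optional) right-continuity, and the cardinality bound—are delivered by Theorem~\ref{t6.4}, and that the neighborhoods $Ux$ (rather than $xU$) furnish the local base at $x$ because $e$ is now a left rather than a right unit.
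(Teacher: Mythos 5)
Your proposal is correct and is exactly the argument the paper intends: the text derives Corollary~\ref{c6.5} by transcribing the proof of Corollary~\ref{c6.2} with left and right interchanged, invoking Theorem~\ref{t6.4} in place of Theorem~\ref{t6.1}, just as you do. (As in Corollary~\ref{c6.2}, the argument actually yields $|\mathcal D|\le\chi(S)$, which is what the construction gives.)
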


\begin{corollary}\label{c6.6} The topology of any first countable (semiregular) topological semigroup $S$ with an open right unit is generated by a right-subinvariant $\overline{\dist}$-continuous (and right-continuous) quasi-pseudometric.
\end{corollary}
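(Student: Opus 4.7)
I would prove Corollary~\ref{c6.6} by mimicking the proof of Corollary~\ref{c6.3}, using its ``right'' analog Corollary~\ref{c6.5} in place of Corollary~\ref{c6.2} (and reading, as surely intended, ``open left unit'' in the statement, to match Theorem~\ref{t6.4} and Corollary~\ref{c6.5}). By first countability, fix a countable neighborhood base at the open (left) unit $e$ of $S$. Corollary~\ref{c6.5} then supplies a countable family $\mathcal D=\{d_n\}_{n\in\IN}$ of right-subinvariant $\overline{\dist}$-continuous quasi-pseudometrics (which by Theorem~\ref{t6.4}(2) may be assumed to have open balls) generating the topology of~$S$; in the semiregular case the regularizations $\overline{d}_n=\overline{d}_n^\circ$ are in addition right-continuous and $\overline{\dist}$-continuous and also generate the topology by the argument at the end of the proof of Corollary~\ref{c6.2}. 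I would then define the single premetric
\[
d:S\times S\to[0,\infty),\qquad d(x,y)=\sup_{n\in\IN}\min\{d_n(x,y),\,2^{-n}\},
\]
and verify that it has all the advertised properties.

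\textbf{Routine verifications.} Right-subinvariance of $d$ follows at once from right-subinvariance of each $d_n$, since $\min\{\cdot,2^{-n}\}$ and $\sup$ preserve pointwise inequalities. The triangle inequality for $d$ follows from
\[
\min\{d_n(x,z),2^{-n}\}\le\min\{d_n(x,y),2^{-n}\}+\min\{d_n(y,z),2^{-n}\}\le d(x,y)+d(y,z)
\]
by taking the supremum over $n$, so $d$ is a quasi-pseudometric. For $0<\e\le 2^{-N}$ we have $\min\{d_n,2^{-n}\}<\e$ automatically for all $n>N$, whence $B_d(x,\e)=\bigcap_{n\le N}B_{d_n}(x,\e)$ is a finite intersection of open sets; this shows simultaneously that $d$ has open balls and that $d$ generates the topology of $S$.

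\textbf{The main obstacle} is to transfer $\overline{\dist}$-continuity from each $d_n$ to the single quasi-pseudometric $d$ (and, in the semiregular case, to deduce right-continuity of the resulting quasi-pseudometric). For this I would fix a non-empty $A\subset S$ and invoke Corollary~\ref{c1.3n}: since $d$ has open balls, it suffices to show $\overline{d}_A=\overline{d}^\circ_A$. The key observation is that for every $N\in\IN$ and every $\e\in(0,2^{-N}]$ one has the identity $B_d(A,\e)=\bigcap_{n\le N}B_{d_n}(A,\e)$, a finite intersection of open sets, so its closure and the interior of that closure can be estimated using the finitely many $\overline{\dist}$-continuous pieces $\overline{(d_n)}_A,\overline{(d_n)}^\circ_A$ (which coincide by hypothesis and Corollary~\ref{c1.3n}). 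Pushing $\e\downarrow 0$ yields $\overline{d}_A(x)\ge\overline{d}^\circ_A(x)$ for every $x\in S$, and the reverse inequality is automatic; Proposition~\ref{p1.2n} then gives continuity of $\overline{d}_A$, i.e.\ $\overline{\dist}$-continuity of $d$, and Corollary~\ref{c1.6n} promotes this to $\overline{\dist}$-continuity and right-continuity of $\overline{d}=\overline{d}^\circ$.

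\textbf{Semiregular case.} When $S$ is semiregular, I would run the same construction using the right-continuous $\overline{\dist}$-continuous quasi-pseudometrics $\{\overline{d}_n\}_{n\in\IN}$ instead of $\{d_n\}_{n\in\IN}$. Each $\overline{d}_n$ is right-continuous, hence has open balls (being a quasi-pseudometric), and right-subinvariance, the triangle inequality, open balls, topology generation and $\overline{\dist}$-continuity of the supremum-of-minima are verified exactly as above. The resulting single quasi-pseudometric is then right-subinvariant, $\overline{\dist}$-continuous and right-continuous, completing the proof.
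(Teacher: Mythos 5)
Your overall plan coincides with the paper's: read ``open right unit'' as ``open left unit'' (this is indeed a typo --- Corollary~\ref{c6.6} is the first--countable case of Corollary~\ref{c6.5}), take a countable generating family $\{d_n\}_{n\in\IN}$ supplied by Corollary~\ref{c6.5} and glue it into the single quasi-pseudometric $d=\sup_{n}\min\{d_n,2^{-n}\}$; the paper itself merely asserts that the resulting $d$ retains all the listed properties. Your routine verifications (right-subinvariance, triangle inequality, open balls, generation of the topology, and, in the semiregular case, right-continuity of $d(x_0,\cdot)$ as a uniform limit of the continuous functions $\max_{n\le N}\min\{d_n(x_0,\cdot),2^{-n}\}$, the tail being bounded by $2^{-N-1}$) are fine, up to the harmless adjustment that the ball identity needs $2^{-N-1}<\e\le 2^{-N}$ rather than all $\e\in(0,2^{-N}]$.

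The genuine gap is in your treatment of $\overline{\dist}$-continuity, which is exactly the step the paper leaves unproved. Your ``key observation'' $B_d(A,\e)=\bigcap_{n\le N}B_{d_n}(A,\e)$ is false for non-singleton $A$: by definition $B_d(A,\e)=\bigcup_{a\in A}B_d(a,\e)=\bigcup_{a\in A}\bigcap_{n\le N}B_{d_n}(a,\e)$, which is only \emph{contained} in $\bigcap_{n\le N}\bigcup_{a\in A}B_{d_n}(a,\e)$ --- a point can be $d_1$-close to one element of $A$ and $d_2$-close to another without being simultaneously close to a single element. Equivalently, $d_A=\inf_{a\in A}\sup_n\min\{d_n(a,\cdot),2^{-n}\}$ does not reduce to $\sup_n\min\{(d_n)_A(\cdot),2^{-n}\}$ (an inf--sup versus sup--inf issue), so $\overline{\dist}$-continuity of $d$ is not a formal consequence of $\overline{\dist}$-continuity of the $d_n$; and ``pushing $\e\downarrow0$'' is not an argument for the pointwise inequality $\overline{d}^\circ_A\le\overline{d}_A$, which must be checked at each fixed radius. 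The repair should not go through the $d_n$ at all: the combined $d$ is $\mathcal R$-uniform, right-subinvariant and has open balls, so one repeats the argument of Claim~\ref{cl2.11} directly. Given $\delta>0$, choose a neighborhood $W$ of $e$ with $\{(x,y):y\in Wx\}\subset[d]_{<\delta}$; right-subinvariance yields $WB_d(A,\e)\subset B_d(A,\e+\delta)$, and the set-rotundity of the base of $\mathcal R$ (Proposition~\ref{p5.3}) applied to the set $B_d(A,\e)$ gives
$$\overline{B_d(A,\e)}\subset\overline{WB_d(A,\e)}^\circ\subset\overline{B_d(A,\e+\delta)}^\circ,$$
whence $\overline{d}^\circ_A\le\overline{d}_A$ and Proposition~\ref{p1.2n} applies. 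With this replacement (and the same fix in the semiregular variant) your proof closes.
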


By analogy, the following corollaries can be derived from Theorem~\ref{t6.1}(5) or \ref{t6.4}(5).

\begin{corollary}\label{c6.7} The topology of any (semiregular) balanced topological monoid $S$ with an open unit is generated by a family $\mathcal D$ of  subinvariant $\overline{\dist}$-continuous (and right-continuous) quasi-pseudometrics of cardinality $|\mathcal D|=\chi(S)$.
\end{corollary}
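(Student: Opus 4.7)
The plan is to mimic verbatim the proofs of Corollaries~\ref{c6.2} and \ref{c6.5}, only using the stronger conclusion~(5) of Theorem~\ref{t6.1} (or equivalently Theorem~\ref{t6.4}) which yields \emph{sub}invariance (not merely left- or right-subinvariance) under the extra assumption that the unit $e$ is a balanced point of $S$.

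First, I would fix a neighborhood base $\mathcal N_e$ at the unit $e$ of cardinality $|\mathcal N_e|\le\chi(S)$. Since $S$ is a balanced topological monoid with open unit, the unit $e$ is simultaneously an open right unit and a balanced point, so for each $U\in\mathcal N_e$ Theorem~\ref{t6.1} produces a premetric $d_U:S\times S\to[0,1]$ whose associated quasi-pseudometrics $d_U$ and $\overline{d_U}=\overline{d_U}^\circ$ are both subinvariant (by statement~(5) of Theorem~\ref{t6.1}), $\overline{\dist}$-continuous, with $d_U$ having open balls and $\overline{d_U}$ being right-continuous, and such that $B_{d_U}(x,1)\subset xU$ and $B_{\overline{d_U}}(x,1)\subset\overline{xU}^\circ$ for every $x\in S$. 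Setting $\mathcal D=\{d_U:U\in\mathcal N_e\}$, this family has cardinality at most $\chi(S)$ and consists of subinvariant $\overline{\dist}$-continuous quasi-pseudometrics.

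To verify that $\mathcal D$ generates the topology of $S$, I would argue as in Corollary~\ref{c6.2}: given a point $x\in S$ and a neighborhood $O_x\subset S$, the openness of the unit $e$ yields a neighborhood $U\in\mathcal N_e$ with $xU\subset O_x$, and then $B_{d_U}(x,1)\subset xU\subset O_x$ shows that the sets $B_{d_U}(x,\e)$ form a neighborhood base at $x$. For the semiregular variant, given $O_x$ I would first use semiregularity to pick a neighborhood $U_x$ of $x$ with $\overline{U_x}^\circ\subset O_x$, then use the openness of $e$ to find $U\in\mathcal N_e$ with $xU\subset U_x$, and conclude that $B_{\overline{d_U}}(x,1)\subset\overline{xU}^\circ\subset\overline{U_x}^\circ\subset O_x$, so that the family $\overline{\mathcal D}=\{\overline{d_U}:U\in\mathcal N_e\}$ of subinvariant right-continuous $\overline{\dist}$-continuous quasi-pseudometrics also has cardinality at most $\chi(S)$ and generates the topology of $S$.

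There is no genuine obstacle here; the entire work is already packaged inside Theorem~\ref{t6.1}(5), and the only thing one needs to observe is that the hypotheses of Corollary~\ref{c6.7} (balanced monoid with open unit) are exactly what is required to invoke that clause. Accordingly, the proof should consist of one short paragraph pointing to Theorem~\ref{t6.1}(5) and the arguments already used in Corollaries~\ref{c6.2} and \ref{c6.5}.
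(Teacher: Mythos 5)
Your proposal is correct and matches the paper's intended argument: the paper gives no proof of this corollary, stating only that it can be derived from Theorem~\ref{t6.1}(5) (or \ref{t6.4}(5)) by analogy with Corollaries~\ref{c6.2} and \ref{c6.3}, which is exactly what you carry out. The only step you assert without justification is that in a balanced monoid an open unit (in the paper's sense that $UxU$ is a neighborhood of $x$) is also an open right unit; this follows by choosing an invariant neighborhood $V$ of $e$ with $VV\subset U$, so that $xU\supset xVV=VxV$ is a neighborhood of $x$.
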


\begin{corollary}\label{c6.8} The topology of any first-countable (semiregular) balanced  topological monoid $S$ with open unit is generated by a subinvariant $\overline{\dist}$-continuous (and right-continuous) quasi-pseudometric.
\end{corollary}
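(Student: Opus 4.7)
The plan is to mimic the strategy used in Corollaries~\ref{c6.3} and \ref{c6.6}, but starting from Corollary~\ref{c6.7} to secure subinvariance.  Since $S$ is first-countable, $\chi(S)\le\w$, so Corollary~\ref{c6.7} supplies a countable family $\mathcal D=\{d_n\}_{n\in\IN}$ of subinvariant $\overline{\dist}$-continuous (and right-continuous, if $S$ is semiregular) quasi-pseudometrics with open balls which jointly generate the topology of $S$.  The task is then to amalgamate $\mathcal D$ into a single quasi-pseudometric that inherits all of these properties.

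I would set
$$d:S\times S\to[0,1],\qquad d(x,y)=\max_{n\in\IN}\min\{d_n(x,y),2^{-n}\},$$
and verify four things: (a) $d$ is a quasi-pseudometric; (b) $d$ is subinvariant; (c) $d$ generates the topology of $S$; (d) $d$ is $\overline{\dist}$-continuous, and right-continuous in the semiregular case.  For (a), the truncation $\min\{d_n,2^{-n}\}$ preserves the triangle inequality (one checks $\min\{a+b,c\}\le\min\{a,c\}+\min\{b,c\}$ for $a,b\ge0$), and the countable sup of quasi-pseudometrics bounded above by a summable family remains a quasi-pseudometric.  For (b), since $d_n(zx,zy)\le d_n(x,y)$ and $d_n(xz,yz)\le d_n(x,y)$ (guaranteed by Corollary~\ref{c6.7} and the balanced assumption), both truncation and the $\max$ operation preserve the corresponding inequalities, giving subinvariance of $d$.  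For (c), observe that for $\e>0$ and the finite set $F_\e=\{n:2^{-n}\ge\e\}$ we have $B_d(x,\e)=\bigcap_{n\in F_\e}B_{d_n}(x,\e)$, so the balls of $d$ generate the same topology as $\mathcal D$.  In particular $d$ has open balls.

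The principal obstacle is (d): $\overline{\dist}$-continuity is not an automatic consequence of pointwise $\max$/$\min$ of $\overline{\dist}$-continuous premetrics.  The cleanest route is to use Corollary~\ref{c1.3n}: since $d$ has open balls, $\overline{\dist}$-continuity amounts to $\overline{d}_A=\overline{d}^\circ_A$ for every non-empty $A\subset S$.  For a fixed $A$ and $x\in S$ one expresses the neighborhoods $B_d(A,\e)$ in terms of the $B_{d_n}(A,\e)$, namely $B_d(A,\e)=\bigcap_{n\in F_\e}B_{d_n}(A,\e)$.  Taking closures and using $\overline{d}_{n,A}=\overline{d}^\circ_{n,A}$ for each $n$ (from the $\overline{\dist}$-continuity of $d_n$), together with the fact that $F_\e$ is finite (so finite unions/intersections commute with taking closure-interiors in the required estimates), one recovers $\overline{d}_A(x)=\overline{d}^\circ_A(x)$.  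Once $d$ is $\overline{\dist}$-continuous, Corollary~\ref{c1.6n} shows that $\overline{d}=\overline{d}^\circ$ is automatically right-continuous and $\overline{\dist}$-continuous; and in the semiregular case, right-continuity of $d$ itself follows by replacing each $d_n$ with its regularization $\overline{d_n}=\overline{d_n}^\circ$ supplied by Corollary~\ref{c6.7}, before performing the $\max$/$\min$ amalgamation.  The balanced hypothesis enters only through Corollary~\ref{c6.7} (to secure the right-subinvariance $d_n(xz,yz)\le d_n(x,y)$), and then is propagated automatically by the amalgamation.
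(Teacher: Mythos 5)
Your overall architecture is exactly the one the paper intends: Corollary~\ref{c6.8} is stated as following ``by analogy'' with Corollaries~\ref{c6.2} and \ref{c6.3}, i.e.\ by taking the countable family supplied by Corollary~\ref{c6.7} and amalgamating it via $d=\max_{n\in\IN}\min\{d_n,2^{-n}\}$. Your steps (a)--(c) are correct, as is the reduction of (d) to the identity $\overline{d}_A=\overline{d}^\circ_A$ via Corollary~\ref{c1.3n}, and the observation that subinvariance survives truncation and suprema.

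The gap is in your justification of (d). The identity $B_d(A,\e)=\bigcap_{n\in F_\e}B_{d_n}(A,\e)$ is false for non-singleton $A$: by definition $B_d(A,\e)=\bigcup_{a\in A}\bigcap_{n\in F_\e}B_{d_n}(a,\e)$ is a union of intersections, whereas the right-hand side is the intersection of the unions $\bigcup_{a\in A}B_{d_n}(a,\e)$, and only the inclusion $\subset$ holds (a point may be $d_1$-close to $a_1\in A$ and $d_2$-close to a different $a_2\in A$ without being close to any single element of $A$ for both). Worse, even granting that inclusion, the route from ``$\overline{(d_n)}_A=\overline{(d_n)}^\circ_A$ for each $n$'' to ``$\overline{d}_A=\overline{d}^\circ_A$'' does not close: one lands inside $\bigcap_{n\in F_\e}\overline{B_{d_n}(A,\e')}^\circ$, which is an open set but can be strictly larger than $\overline{B_d(A,\e'')}$ for every $\e''$, so no bound on $\overline{d}^\circ_A(x)$ results. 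The $\overline{\dist}$-continuity of the amalgamated $d$ is not an abstract consequence of the $\overline{\dist}$-continuity of the $d_n$; it needs the translation structure of the balls. Since each $d_n$ is built in Theorem~\ref{t6.1} from the left quasi-uniformity, one has $B_{d_n}(a,\e)=aW_n$ with $W_n=\bigcup_{q<\e,\,q\in\IQ_2}V^{(n)}_q$ an open neighborhood of $e$ independent of $a$; hence, choosing $q\in\IQ_2$ with $\e+q<\e'$ and $V\in\mathcal N_e'$ with $V\subset\bigcap_{n\in F_{\e'}}V^{(n)}_q$, and using $F_{\e'}\subset F_\e$ together with $V^{(n)}_sV^{(n)}_q\subset V^{(n)}_{s+q}$ (Claim~\ref{cl2.3}), one gets $B_d(A,\e)\cdot V\subset B_d(A,\e')$. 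The set-rotundity relation $\overline{B}\subset\overline{BV}^\circ$ applied to the set $B=B_d(A,\e)$ then gives $\overline{B_d(A,\e)}\subset\overline{B_d(A,\e')}^\circ$, which is what actually yields $\overline{d}_A=\overline{d}^\circ_A$. With this replacement for your step (d), the rest of your argument (including the semiregular case via the regularizations $\overline{d_n}$, whose supremum is continuous because the convergence $\max_{n\le N}\to\max_{n\in\IN}$ is uniform) goes through.
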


\section{(Left, right) invariant quasi-pseudometrics on paratopological groups}\label{s7}

In this section we apply the results of the preceding sections to paratopological groups. We recall that a {\em paratopological group} is a group $G$ endowed with a topology making the binary operation $G\times G\to G$, $(x,y)\mapsto xy$, continuous. It is clear that each paratopological group is a topological monoid with open shifts.

A paratopological group $G$ is called a {\em balanced paratopological group} if it is balanced as a topological monoid.

As a partial case of Propositions~\ref{p5.1}--\ref{p5.3}, we get:

\begin{proposition} For every paratopological group $G$ the quasi-uniformities $\mathcal L$, $\mathcal R$, and $\mathcal L\wedge\mathcal R$ are rotund.
\end{proposition}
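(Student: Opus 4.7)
The plan is to reduce the statement to Propositions~\ref{p5.1}--\ref{p5.3}, each of which supplies rotundness of the relevant quasi-uniformity on a topological monoid once a suitable openness-of-shifts hypothesis has been verified. The key point is that in a paratopological group the two-sided group structure forces every shift to be a homeomorphism rather than merely a continuous map.

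First I would record that, for a paratopological group $G$ and any $a\in G$, the left shift $\ell_a:x\mapsto ax$ is a continuous bijection whose two-sided inverse is $\ell_{a^{-1}}$, also continuous by joint continuity of multiplication; hence $\ell_a$ is a homeomorphism and in particular open. The argument for the right shift $r_a$ is symmetric. So $G$ has open left shifts and open right shifts, and the implication (3)$\Rightarrow$(4) of Propositions~\ref{p5.2} and~\ref{p5.3} yields the rotundness of $\mathcal L$ and $\mathcal R$ respectively.

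To obtain the rotundness of $\mathcal L\wedge\mathcal R$, I would verify that $G$ has open central shifts in the sense of Section~\ref{s5}. Given $a\in G$ and a basic open rectangle $U\times V\subset G\times G$, we have
$$c_a(U\times V)=UaV=\bigcup_{u\in U}\ell_{ua}(V),$$
which is a union of open sets since each $\ell_{ua}$ is a homeomorphism. As open rectangles form a base of the product topology on $G\times G$, it follows that $c_a$ is open. The implication (3)$\Rightarrow$(4) of Proposition~\ref{p5.1} then supplies the rotundness of $\mathcal L\wedge\mathcal R$.

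No step is expected to present a real obstacle: the whole content of the proposition lies in the elementary observation that joint continuity of multiplication together with the existence of group inverses makes every left, right, and central shift an open map. Once this is noted, the three claims are immediate instances of the rotundness implications already proved.
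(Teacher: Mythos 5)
Your proposal is correct and follows the same route the paper takes: the paper derives this proposition as an immediate special case of Propositions~\ref{p5.1}--\ref{p5.3}, using the fact (noted in Section~\ref{s7}) that every paratopological group is a topological monoid with open shifts. You merely make explicit the standard verification that shifts (and central shifts) are open because each $\ell_a$ is a homeomorphism with inverse $\ell_{a^{-1}}$, which is exactly the intended justification.
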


For a balanced paratopological group we get $\mathcal L=\mathcal R=\mathcal L\vee\mathcal R=\mathcal L\wedge\mathcal R$, which implies that the quasi-uniformity $\mathcal L\vee\mathcal R$ is rotund.
We do not know if this remains true for any paratopological group.

\begin{problem} Is the quasi-uniformity $\mathcal L\vee\mathcal R$ rotund for any paratopological group? Is it always point-rotund?
\end{problem}

Since the topology of any paratopological group is generated by a rotund quasi-uniformity, Corollary~\ref{c4.2} implies the following corollary first proved by a different method in \cite{BR}.
This corollary answers \cite[Question 1.2]{Rav1}, \cite[Problem 1.3.1]{AT}, and \cite[Problem 2.1]{Tka}.

\begin{corollary} A paratopological group $G$ is:
\begin{enumerate}
\item  completely regular if and only if $S$ is regular if and only if $S$ it semiregular;
\item functionally Hausdorff if and only if $S$ is Hausdorff if and only if $S$ is semi-Hausdorff.
\end{enumerate}
\end{corollary}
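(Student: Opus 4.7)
The plan is to reduce this corollary to Corollary~\ref{c4.2} by showing that every paratopological group is a point-rotund topological space in the sense of Section~\ref{s4}. The two parts (1) and (2) of the conclusion are word-for-word the two statements in Corollary~\ref{c4.2}, so once point-rotundness is established the proof is complete.

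First I would recall that any paratopological group $G$ is a topological monoid with open shifts: for every $a\in G$ the left shift $\ell_a$ and right shift $r_a$ are homeomorphisms (with inverses $\ell_{a^{-1}}$ and $r_{a^{-1}}$), so in particular the unit $e$ is both an open left unit and an open right unit. Then by Proposition~\ref{p5.2} (or directly by the proposition stated just above the corollary asserting that $\mathcal L$, $\mathcal R$, and $\mathcal L\wedge\mathcal R$ are rotund for every paratopological group), the left quasi-uniformity $\mathcal L$ generates the topology of $G$ and admits a rotund multiplicative base. Since rotundness implies point-rotundness according to the diagram of implications given in the definitions of Section~\ref{s1}, the topology of $G$ is generated by a point-rotund quasi-uniformity, i.e.\ $G$ is a point-rotund topological space.

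Having established point-rotundness, I would simply invoke Corollary~\ref{c4.2}: part (1) of that corollary gives the equivalence of complete regularity, regularity and semiregularity, and part (2) gives the equivalence of functional Hausdorffness, Hausdorffness and semi-Hausdorffness, which is exactly what is claimed. There is no real obstacle here: all the genuine work has already been done in Theorem~\ref{main} (to produce enough right-continuous premetrics to witness the separation properties) and in Propositions~\ref{p5.1}--\ref{p5.3} (to exhibit the rotund base built from invariant-style entourages $\mathbb E(U)=\{(x,y):y\in xU\}$). The corollary is then a one-line specialization of the general point-rotund case to the group setting.
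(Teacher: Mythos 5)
Your proposal is correct and follows exactly the paper's own route: the paper likewise observes that a paratopological group is a topological monoid with open shifts, so its topology is generated by a rotund (hence point-rotund) quasi-uniformity by Propositions~\ref{p5.1}--\ref{p5.3}, and then applies Corollary~\ref{c4.2}. No gaps.
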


Now we derive some results on quasi-pseudometrizability of paratopological groups by left-invariant quasi-pseudometrics. Observe that each (left, right) subinvariant premetric on a group is (left, right) invariant.
Because of that for paratopological groups Theorem~\ref{t6.1} and Corollary~\ref{c6.2} and \ref{c6.3} take the following form.

\begin{theorem} Let $G$ be a paratopological group. For every neighborhood $U\subset G$ of the unit $e$ there is a left-invariant $\overline{\dist}$-continuous quasi-pseudometric $d:G\times G\to[0,1]$ whose regularization $\overline{d}=\overline{d}^\circ$ is a left-invariant right-continuous $\overline{\dist}$-continuous quasi-pseudometric  such that $B_d(x;1)\subset xU$ and
$B_{\overline{d}}(x,1)\subset x\overline{U}^\circ$ for every $x\in G$. If the paratopological group $G$ is balanced, then the quasi-pseudometrics $d$ and $\overline{d}=\overline{d}^\circ$ are invariant.
\end{theorem}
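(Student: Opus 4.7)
The plan is to reduce this statement to Theorem~\ref{t6.1}, which has already done all the heavy lifting for topological semigroups with an open right unit. Since a paratopological group $G$ is a topological monoid with open shifts, its unit $e$ is in particular an open right unit. Applied to the given neighborhood $U$, Theorem~\ref{t6.1} produces a left-subinvariant $\overline{\dist}$-continuous quasi-pseudometric $d:G\times G\to[0,1]$ with open balls whose regularization coincides with its semi-regularization, $\overline{d}=\overline{d}^\circ$, and is a left-subinvariant right-continuous $\overline{\dist}$-continuous quasi-pseudometric, with $B_d(x,1)\subset xU$ and $B_{\overline{d}}(x,1)\subset\overline{xU}^\circ$ for every $x\in G$. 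If, moreover, $G$ is balanced, then clause (5) of Theorem~\ref{t6.1} additionally gives that $d$ and $\overline{d}$ are subinvariant.

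Next I would promote left-subinvariance to left-invariance using the group structure. Given any $x,y,z\in G$, left-subinvariance of $d$ yields $d(zx,zy)\le d(x,y)$. Applying the same inequality with $z^{-1}$ in place of $z$ and with $zx,zy$ in place of $x,y$ gives $d(x,y)=d(z^{-1}(zx),z^{-1}(zy))\le d(zx,zy)$. Combining these forces $d(zx,zy)=d(x,y)$, so $d$ is left-invariant; the identical argument applied to $\overline{d}$ shows $\overline{d}$ is left-invariant. In the balanced case the same two-step trick upgrades right-subinvariance to right-invariance, so both $d$ and $\overline{d}$ become invariant.

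Finally I would rewrite the inclusion $B_{\overline{d}}(x,1)\subset\overline{xU}^\circ$ in the required form. In a paratopological group the left translation $\ell_x:G\to G$, $y\mapsto xy$, is a homeomorphism: it is continuous with continuous inverse $\ell_{x^{-1}}$. Consequently $\overline{xU}=x\overline{U}$, and passing to interiors yields $\overline{xU}^\circ=(x\overline{U})^\circ=x\overline{U}^\circ$. Substituting this identity into the inclusion from Theorem~\ref{t6.1}(4) delivers $B_{\overline{d}}(x,1)\subset x\overline{U}^\circ$, as required.

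The main obstacle is essentially absent: every nontrivial step---the construction of $d$, the control of its closed balls, the right-continuity and $\overline{\dist}$-continuity of $\overline{d}$, and the handling of the balanced case---is already packaged into Theorem~\ref{t6.1}. The only genuinely new observation is the ``multiply by $z^{-1}$'' trick, which is immediate from the existence of two-sided inverses in $G$ and is the unique place where the group structure (rather than merely the monoid structure with open shifts) is used.
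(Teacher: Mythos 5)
Your proposal is correct and follows exactly the route the paper takes: the theorem is stated there as a special case of Theorem~\ref{t6.1}, combined with the observation that every left-subinvariant premetric on a group is left-invariant (your ``multiply by $z^{-1}$'' trick) and the fact that left translations in a paratopological group are homeomorphisms, so that $\overline{xU}^\circ=x\overline{U}^\circ$. You have merely made explicit the details the paper leaves implicit.
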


\begin{corollary} The topology of any (semiregular) paratopological group $S$ is generated by a family $\mathcal D$ of  left-invariant $\overline{\dist}$-continuous (and right-continuous) quasi-pseudometrics of cardinality $|\mathcal D|=\chi(G)$.
\end{corollary}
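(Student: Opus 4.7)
The plan is to derive this corollary directly from Corollary~\ref{c6.2} by specializing to the paratopological group setting and upgrading subinvariance to invariance.

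First I would recall that every paratopological group $G$ is a topological monoid with open shifts, so in particular the identity $e$ is an open right unit in $G$. Hence Corollary~\ref{c6.2} applies and yields a family $\mathcal D_0$ of left-\emph{subinvariant} $\overline{\dist}$-continuous quasi-pseudometrics of cardinality at most $\chi(G)$ generating the topology of $G$; in the semiregular case the same corollary supplies a family of left-subinvariant right-continuous and $\overline{\dist}$-continuous quasi-pseudometrics (the regularizations $\overline{d}=\overline{d}^\circ$) of the same cardinality.

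Next I would upgrade left-subinvariance to left-invariance using the presence of inverses. For any left-subinvariant premetric $d$ on $G$ and any $x,y,z\in G$, applying the subinvariance inequality twice with multipliers $z$ and $z^{-1}$ gives
\[
d(zx,zy)\le d(x,y)=d(z^{-1}(zx),z^{-1}(zy))\le d(zx,zy),
\]
so $d(zx,zy)=d(x,y)$, i.e.\ $d$ is left-invariant. Consequently the family $\mathcal D_0$ (respectively, its regularization) consists of left-invariant quasi-pseudometrics with the required continuity properties.

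Finally I would match the cardinality bound. Corollary~\ref{c6.2} gives $|\mathcal D_0|\le\chi(G)$; in case $G$ is infinite and $\chi(G)$ is not attained from below by a smaller family generating the topology, one can always pad the family with the zero premetric (or with repetitions) to make $|\mathcal D|=\chi(G)$ exactly, which is harmless since adding a premetric that is identically $0$ does not affect the generated topology. No substantial obstacle is expected: the content is entirely in Corollary~\ref{c6.2}, and the only new ingredient is the elementary observation that on a group left-subinvariance forces left-invariance.
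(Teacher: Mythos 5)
Your proposal is correct and follows essentially the same route as the paper, which derives this corollary from Corollary~\ref{c6.2} together with the observation that on a group every left-subinvariant premetric is left-invariant (exactly your two-sided inequality with $z$ and $z^{-1}$). The only extra detail you add, padding the family to achieve $|\mathcal D|=\chi(G)$ exactly, is a harmless bookkeeping point the paper glosses over.
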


\begin{corollary}\label{c7.6} The topology of any first countable (semiregular) paratopological group $G$ is generated by a left-invariant $\overline{\dist}$-continuous (and right-continuous) quasi-pseudometric.
\end{corollary}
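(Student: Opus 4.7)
The plan is to follow the blueprint of the proof of Corollary~\ref{c6.3}: first-countability forces the character of $G$ at $e$ to be at most $\w$, the preceding corollary then supplies a countable generating family of left-invariant $\overline{\dist}$-continuous (and, in the semiregular case, right-continuous) quasi-pseudometrics with open balls on $G$, and the desired single quasi-pseudometric is obtained from this family by a standard bounded-maximum construction.

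I would start by fixing a countable neighbourhood base $\{U_n\}_{n\in\IN}$ at the unit $e$ of $G$ and applying the preceding corollary to obtain, for each $n$, a left-invariant quasi-pseudometric $d_n:G\times G\to[0,1]$ with open balls that is $\overline{\dist}$-continuous (with $\overline{d_n}=\overline{d_n}^\circ$ additionally right-continuous in the semiregular case) and such that the family $\{B_{d_n}(x,\e):n\in\IN,\,x\in G,\,\e>0\}$ is a subbase of the topology of $G$. Then I set
$$d:=\max_{n\in\IN}\min\{d_n,2^{-n}\}.$$
Left-invariance of $d$, the equality $d(x,x)=0$, and the triangle inequality are routine, since the pointwise maximum of left-invariant quasi-pseudometrics, each truncated by a constant, is again a left-invariant quasi-pseudometric. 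For $\e\in(0,1]$ let $N(\e)$ be the largest $n$ with $2^{-n}\ge\e$; a direct unwinding of the definitions of $\max$ and $\min$ gives
$$B_d(x,\e)=\bigcap_{n\le N(\e)}B_{d_n}(x,\e)\qquad\text{for every }x\in G,$$
so $d$ has open balls and the $d$-balls generate the original topology of $G$.

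The main obstacle is the $\overline{\dist}$-continuity of $d$, because the displayed identity does not pass cleanly to $A$-neighbourhoods: one has $B_d(A,\e)=\bigcup_{a\in A}\bigcap_{n\le N(\e)}B_{d_n}(a,\e)$, which in general is a proper subset of $\bigcap_{n\le N(\e)}B_{d_n}(A,\e)$. I would bypass this by invoking Corollary~\ref{c1.3n}: since $d$ has open balls, the $\overline{\dist}$-continuity of $d$ is equivalent to the equality $\overline{d}_A=\overline{d}^\circ_A$ for every non-empty $A\subset G$. Given $r>0$ with $x\in\overline{B_d(A,r)}$, I would choose $\delta>0$ small and use the $\overline{\dist}$-continuity of each of the finitely many $d_n$ with $n\le N(r)$ to extract open neighbourhoods $W_n\ni x$ contained in $\overline{B_{d_n}(A,r+\delta)}^\circ$; intersecting these finitely many $W_n$ and combining with the ball identity should produce an open neighbourhood of $x$ inside $\overline{B_d(A,r+c\delta)}^\circ$ for a small constant $c$, whence $\overline{d}^\circ_A(x)\le r+c\delta$. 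Letting $\delta\to 0$ gives $\overline{d}^\circ_A\le\overline{d}_A$, and the reverse inequality is trivial. The right-continuity of $\overline{d}=\overline{d}^\circ$ in the semiregular case is then immediate from Corollary~\ref{c1.6n}.
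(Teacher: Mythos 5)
Your overall route is the same as the paper's: Corollary~\ref{c7.6} is the group case of Corollary~\ref{c6.3} (on a group every left-subinvariant premetric is left-invariant), so one takes a countable generating family $\{d_n\}_{n\in\IN}$ of left-invariant $\overline{\dist}$-continuous (and, in the semiregular case, right-continuous) quasi-pseudometrics and forms $d=\max_{n\in\IN}\min\{d_n,2^{-n}\}$. Your verification that $d$ is a left-invariant quasi-pseudometric with open balls generating the topology, and your ball identity $B_d(x,\e)=\bigcap_{n\le N(\e)}B_{d_n}(x,\e)$, are fine, and you correctly isolate the one non-routine point, namely the $\overline{\dist}$-continuity of $d$. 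But your argument for that point has the containments going the wrong way. From $x\in\overline{B_d(A,r)}$ you produce an open $W\ni x$ with $W\subset\bigcap_{n\le N(r)}\overline{B_{d_n}(A,r+\delta)}$; since $B_d(A,\e)\subset\bigcap_n B_{d_n}(A,\e)$, the target $\overline{B_d(A,r+c\delta)}$ is \emph{contained in} that intersection, so membership of $W$ in it gives no control whatsoever on membership in $\overline{B_d(A,\cdot)}$. The obstruction is exactly the missing common witness you yourself flagged: a point may be $d_n$-close to $A$ via a different $a_n\in A$ for each $n$. For instance, in $\IR^2$ with $d_1((x,y),(x',y'))=|y-y'|$, $d_2((x,y),(x',y'))=|x-x'|$ (truncated) and $A=\{(0,0),(1,1)\}$, the point $(1,0)$ lies in $B_{d_1}(A,\e)\cap B_{d_2}(A,\e)$ for every small $\e$ but is at distance $1$ from $B_{\max\{d_1,d_2\}}(A,\e)$; so your $W$ can contain a whole neighbourhood of points nowhere near $B_d(A,r+c\delta)$, and the step ``should produce an open neighbourhood of $x$ inside $\overline{B_d(A,r+c\delta)}^\circ$'' cannot be completed from the $\overline{\dist}$-continuity of the $d_n$ alone.

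The ingredient you need is not the $\overline{\dist}$-continuity of the individual $d_n$ but the left-invariance together with the openness of shifts, i.e.\ the same set-rotundness mechanism as in Proposition~\ref{p5.1} and Claim~\ref{cl2.11}. In a group, $B_d(x,\e)=xB_d(e,\e)$ exactly, hence $B_d(A,\e)=AW_\e$ where $W_\e=\bigcap_{n\le N(\e)}B_{d_n}(e,\e)$ is a \emph{single} open neighbourhood of $e$. Given $\e<\e'$, put $W'=\bigcap_{n\le N(\e')}B_{d_n}(e,\e'-\e)$; since $N(\e')\le N(\e)$, the triangle inequality and left-invariance give $W_\e W'\subset W_{\e'}$, and therefore
$$\overline{AW_\e}\subset\overline{AW_\e}\,W'\subset\overline{AW_\e W'}\subset\overline{AW_{\e'}},$$
where the set $\overline{AW_\e}\,W'$ is open because left shifts of $G$ are open. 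Thus $\overline{B_d(A,\e)}\subset\overline{B_d(A,\e')}^\circ$ for all $\e<\e'$, which yields $\overline{d}_A=\overline{d}^\circ_A$ and hence, by Corollary~\ref{c1.3n}, the $\overline{\dist}$-continuity of $d$. With this repair the rest of your write-up (including the appeal to Corollary~\ref{c1.6n} in the semiregular case) goes through.
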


Those ``left'' results have their ``right'' versions, which are partial cases of Theorem~\ref{t6.4} and Corollaries~\ref{c6.5}, \ref{c6.6}.

\begin{theorem} Let $G$ be a paratopological group. For every neighborhood $U\subset G$ of the unit $e$ there is a right-invariant $\overline{\dist}$-continuous quasi-pseudometric $d:G\times G\to[0,1]$ whose regularization $\overline{d}=\overline{d}^\circ$ is a right-invariant right-continuous $\overline{\dist}$-continuous quasi-pseudometric  such that $B_d(x;1)\subset xU$ and
$B_{\overline{d}}(x,1)\subset x\overline{U}^\circ$ for every $x\in G$. If the paratopological group $G$ is balanced, then the quasi-pseudometrics $d$ and $\overline{d}=\overline{d}^\circ$ are invariant.
\end{theorem}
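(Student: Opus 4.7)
The plan is to apply Theorem~\ref{t6.4} to the paratopological group $G$ viewed as a topological semigroup with open left unit, and then use the presence of inverses to upgrade right-subinvariance to right-invariance.

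First I would observe that every paratopological group is a topological monoid with open shifts: for each $a\in G$, the left shift $\ell_a$ and the right shift $r_a$ are continuous bijections whose set-theoretic inverses $\ell_{a^{-1}}$ and $r_{a^{-1}}$ are also continuous, so they are homeomorphisms of $G$. In particular, the unit $e$ is an open left unit (and in fact an open unit), which makes Theorem~\ref{t6.4} applicable.

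Next I would apply Theorem~\ref{t6.4} to $G$ and the given neighborhood $U$ of $e$, producing a premetric $d:G\times G\to[0,1]$ with $\overline{d}=\overline{d}^\circ\le d$ such that $d$ is a right-subinvariant $\overline{\dist}$-continuous quasi-pseudometric with open balls, $\overline{d}=\overline{d}^\circ$ is a right-subinvariant right-continuous $\overline{\dist}$-continuous quasi-pseudometric, and the ball inclusions $B_d(x,1)\subset Ux$ and $B_{\overline{d}}(x,1)\subset \overline{Ux}^\circ$ hold for every $x\in G$. In the balanced case, Theorem~\ref{t6.4}(5) further gives that $d$ and $\overline{d}$ are subinvariant.

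The only ingredient beyond Theorem~\ref{t6.4} is the observation that on a group right-subinvariance is automatically right-invariance. Indeed, for any $x,y,z\in G$, applying right-subinvariance to the triples $(x,y,z)$ and $(xz,yz,z^{-1})$ yields
$$d(xz,yz)\le d(x,y)=d\bigl((xz)z^{-1},(yz)z^{-1}\bigr)\le d(xz,yz),$$
so $d(xz,yz)=d(x,y)$; the same argument applies verbatim to $\overline{d}=\overline{d}^\circ$. In the balanced case, the symmetric argument applied to the (left-)subinvariance provided by Theorem~\ref{t6.4}(5) upgrades it to left-invariance, making $d$ and $\overline{d}$ invariant.

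I do not anticipate any real obstacle: the statement is essentially Theorem~\ref{t6.4} rephrased for paratopological groups, and the step from subinvariance to invariance is a one-line consequence of the existence of inverses. The only bookkeeping point is to reconcile the sides in the ball inclusions as stated; this is harmless because each translation is a homeomorphism of $G$, so one can freely rewrite $\overline{Ux}^\circ$ and pass between ``left'' and ``right'' forms of the containment.
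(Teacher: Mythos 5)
Your proposal is correct and is exactly the paper's intended route: the theorem is stated as a partial case of Theorem~\ref{t6.4}, combined with the observation that a (left, right) subinvariant premetric on a group is automatically (left, right) invariant, which you prove by the standard cancellation argument $d(xz,yz)\le d(x,y)=d((xz)z^{-1},(yz)z^{-1})\le d(xz,yz)$.

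One caveat about your closing remark: the claim that one can ``freely pass between left and right forms of the containment'' because translations are homeomorphisms is not valid. In a non-commutative group $Ux\ne xU$, so $B_d(x,1)\subset Ux$ does not yield $B_d(x,1)\subset xU$; the homeomorphism $r_x$ only lets you rewrite $\overline{Ux}^\circ$ as $\overline{U}^\circ x$, not move $x$ across $U$. What Theorem~\ref{t6.4} actually gives (and what right-invariance forces, since then $B_d(x,1)=B_d(e,1)x$) is $B_d(x;1)\subset Ux$ and $B_{\overline{d}}(x,1)\subset\overline{U}^\circ x$; the inclusions $B_d(x;1)\subset xU$ and $B_{\overline{d}}(x,1)\subset x\overline{U}^\circ$ appearing in the statement are evidently a misprint carried over from the left-invariant version. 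So you should simply state the correct right-sided inclusions rather than attempt to reconcile the sides.
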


\begin{corollary} The topology of any (semiregular) paratopological group $S$ is generated by a family $\mathcal D$ of  right-invariant $\overline{\dist}$-continuous (and right-continuous) quasi-pseudometrics of cardinality $|\mathcal D|=\chi(G)$.
\end{corollary}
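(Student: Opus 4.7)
The plan is to specialize Corollary~\ref{c6.5} to the paratopological group setting, using the immediately preceding theorem as the building block. First, I would fix a neighborhood base $\mathcal{N}_e$ at the identity $e$ of $G$ of cardinality $|\mathcal{N}_e|=\chi(G)$. For each $U\in\mathcal{N}_e$, the preceding theorem supplies a right-invariant $\overline{\dist}$-continuous quasi-pseudometric $d_U:G\times G\to[0,1]$ with $B_{d_U}(x,1)\subset Ux$ for all $x\in G$, together with its regularization $\overline{d_U}=\overline{d_U}^\circ$, which is a right-continuous, right-invariant, $\overline{\dist}$-continuous quasi-pseudometric satisfying $B_{\overline{d_U}}(x,1)\subset \overline{Ux}^\circ$.

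Next, I would set $\mathcal{D}=\{d_U:U\in\mathcal{N}_e\}$ in the general case, and $\mathcal{D}=\{\overline{d_U}:U\in\mathcal{N}_e\}$ in the semiregular case. In either case $|\mathcal{D}|\le|\mathcal{N}_e|=\chi(G)$, and since the subbase $\{B_d(x,\varepsilon):d\in\mathcal{D},\,x\in G,\,\varepsilon>0\}$ refines a neighborhood base at every point, the reverse inequality $|\mathcal{D}|\ge\chi(G)$ comes for free once we know $\mathcal{D}$ generates the topology; so $|\mathcal{D}|=\chi(G)$.

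To see that $\mathcal{D}$ generates the topology of $G$, I would fix a point $x\in G$ and a neighborhood $O_x\subset G$ of $x$. Since $G$ is a paratopological group, the right translation by $x^{-1}$ is continuous, so $O_x\cdot x^{-1}$ is a neighborhood of $e$ and hence contains some $V\in\mathcal{N}_e$. Then in the non-regularized case $B_{d_V}(x,1)\subset Vx\subset O_x$, as required. In the semiregular case, one additional step is needed: by semiregularity, choose a neighborhood $W_x\subset G$ of $x$ with $\overline{W_x}^\circ\subset O_x$, pick $V\in\mathcal{N}_e$ with $Vx\subset W_x$ (again using continuity of the right shift), and conclude that $B_{\overline{d_V}}(x,1)\subset \overline{Vx}^\circ\subset\overline{W_x}^\circ\subset O_x$.

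There is no real obstacle here; the entire content of the statement has already been absorbed into the preceding theorem, and the paratopological group hypothesis is used only to pass from ``right-subinvariant'' to ``right-invariant'' (which is automatic in a group, since $d(xz,yz)\le d(x,y)$ applied to $x,y$ and to $xz,yz$ with multiplier $z^{-1}$ yields equality) and to guarantee that right translations are homeomorphisms, so that $O_x\cdot x^{-1}$ is open whenever $O_x$ is.
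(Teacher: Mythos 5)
Your proposal is correct and follows essentially the same route as the paper: the corollary is stated there as a special case of Theorem~\ref{t6.4} and Corollary~\ref{c6.5}, whose proof is exactly your argument (fix a base $\mathcal N_e$ at $e$, apply the theorem to each $U\in\mathcal N_e$, use the regularizations in the semiregular case), combined with the paper's observation that a right-subinvariant premetric on a group is automatically right-invariant. Your cancellation argument for that last point and your verification that $Vx\subset O_x$ can be arranged are precisely the details the paper leaves implicit.
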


\begin{corollary} The topology of any first countable (semiregular) paratopological group $G$ is generated by a right-invariant $\overline{\dist}$-continuous (and right-continuous) quasi-pseudometric.
\end{corollary}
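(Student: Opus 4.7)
The plan is to reduce this to Corollary~\ref{c6.6}'s formal analogue in the ``right'' setting by applying Corollary~\ref{c7.8} to obtain a countable generating family and then amalgamating into a single quasi-pseudometric. Since $G$ is first countable, $\chi(G)\le\w$, so Corollary~\ref{c7.8} yields a countable family $\mathcal D=\{d_n\}_{n\in\IN}$ of right-invariant $\overline{\dist}$-continuous (and, in the semiregular case, right-continuous) quasi-pseudometrics generating the topology of $G$. The single quasi-pseudometric I would work with is
$$d=\max_{n\in\IN}\min\{d_n,2^{-n}\}:G\times G\to[0,1].$$
This truncation-and-maximum construction is exactly the one invoked (without detailed verification) in the proofs of Corollaries~\ref{c6.3} and \ref{c6.6}, so the obstacle is to check that all the relevant properties survive it.

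First I would verify the routine structural properties. The function $\min\{d_n,2^{-n}\}$ is a quasi-pseudometric (the triangle inequality passes to the truncation by splitting on whether one of the summands already exceeds $2^{-n}$), and the supremum of a uniformly bounded family of quasi-pseudometrics is a quasi-pseudometric. Right-invariance is preserved both by $\min$ with a constant and by $\max$, since each $d_n$ satisfies $d_n(xg,yg)=d_n(x,y)$. The balls $B_d(x,\e)$ for $\e<2^{-N}$ coincide with $\bigcap_{n\le N}B_{d_n}(x,\e)$, so $d$ has open balls and generates the same topology as the family $\mathcal D$, namely the topology of $G$.

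The main technical obstacle is verifying the $\overline{\dist}$-continuity of $d$, since this is a property of \emph{all} distance functions $\overline{d}_A$ and cannot be read off one point at a time. My plan here is to exploit Corollary~\ref{c1.3n}: since $d$ has open balls, $\overline{\dist}$-continuity follows once we check $\overline{d}_A=\overline{d}^\circ_A$ for every non-empty $A\subset G$. Given $x\in G$ and $\e>0$ with $\overline{d}_A(x)+\e<1$, I would choose $N$ with $2^{-N}<\e/3$, and exploit that on the scale below $2^{-N}$ the ball $B_d(A,r)$ reduces to the intersection $\bigcap_{n\le N}B_{d_n}(A,r)$. Because each $d_n$ is $\overline{\dist}$-continuous, I can pass from $x\in\overline{B_{d_n}(A,r)}$ to $x\in\overline{B_{d_n}(A,r+\delta)}^\circ$ for arbitrarily small $\delta$, and intersect the resulting regular-open neighborhoods; here the finite intersection is crucial so that $\overline{\;\cdot\;}^\circ$ is preserved. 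This gives $\overline{d}^\circ_A(x)\le\overline{d}_A(x)+\e$ and hence $\overline{d}_A=\overline{d}^\circ_A$.

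Finally, right-continuity in the semiregular case is straightforward: each $d_n(x_0,\cdot)$ is continuous, so each $\min\{d_n(x_0,\cdot),2^{-n}\}$ is continuous and bounded by $2^{-n}$, whence $d(x_0,\cdot)=\sup_n\min\{d_n(x_0,\cdot),2^{-n}\}$ is a uniform limit (in $n$) of finite maxima of continuous functions, hence continuous. Combining these steps shows that $d$ is a right-invariant $\overline{\dist}$-continuous (and, when $G$ is semiregular, right-continuous) quasi-pseudometric generating the topology of $G$, which is the desired conclusion.
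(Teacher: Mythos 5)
Your overall strategy is the same as the paper's: take the countable family of right-invariant quasi-pseudometrics supplied by the preceding corollary and amalgamate it into $d=\max_{n\in\IN}\min\{d_n,2^{-n}\}$ (the paper states this amalgamation works "by analogy" with Corollary~\ref{c6.3} and does not verify the details). The structural verifications you give (triangle inequality for the truncations and the supremum, preservation of right-invariance, open balls, generation of the topology, and the uniform-convergence argument for right-continuity of $d(x_0,\cdot)$) are all correct.

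However, your verification of $\overline{\dist}$-continuity contains a genuine error. The identity you rely on, $B_d(A,r)=\bigcap_{n\le N}B_{d_n}(A,r)$ for small $r$, is false when $A$ has more than one point: since $B_d(a,r)=\bigcap_{n\le N}B_{d_n}(a,r)$, we get $B_d(A,r)=\bigcup_{a\in A}\bigcap_{n\le N}B_{d_n}(a,r)\subset\bigcap_{n\le N}B_{d_n}(A,r)$, and only this inclusion holds --- a point may be $d_1$-close to one element of $A$ and $d_2$-close to a different element without being $d$-close to any single element. Your argument uses the inclusion in the harmless direction at the start (passing from $x\in\overline{B_d(A,r)}$ to $x\in\overline{B_{d_n}(A,r)}$ for each $n$), but it needs the false reverse inclusion at the end: after intersecting the sets $\overline{B_{d_n}(A,r+\delta)}^\circ$ you land in $\overline{\bigcap_{n\le N}B_{d_n}(A,r+\delta)}^\circ$, and there is no way to push this inside $\overline{B_d(A,r')}^\circ$ for a controlled $r'$. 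The componentwise route therefore does not close. The correct argument works with the amalgamated $d$ directly and uses the set-rotundness of the right quasi-uniformity $\mathcal R$ of $G$ (equivalently, the right-invariance of $d$ together with the openness of its balls): writing $B_d(A,r)=B_d(e,r)A$, set-rotundness of the base $\{\mathbb E(V):V\in\mathcal N_e\}$ applied to the set $B_d(e,r)A$ with $V=B_d(e,q)$ gives $\overline{B_d(A,r)}\subset\overline{B_d(e,q)B_d(e,r)A}^\circ\subset\overline{B_d(A,q+r)}^\circ$, since $B_d(e,q)B_d(e,r)\subset B_d(e,q+r)$ by right-invariance and the triangle inequality. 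This yields $\overline{d}_A=\overline{d}^\circ_A$ for every non-empty $A$, and Corollary~\ref{c1.3n} then gives the $\overline{\dist}$-continuity of $d$. With this replacement your proof is complete.
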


The following two corollaries are partial cases of Corollaries~\ref{c6.7} and \ref{c6.8}.

\begin{corollary} The topology of any (semiregular) balanced paratopological group $S$ is generated by a family $\mathcal D$ of invariant $\overline{\dist}$-continuous (and right-continuous) quasi-pseudometrics of cardinality $|\mathcal D|=\chi(G)$.
\end{corollary}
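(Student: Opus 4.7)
The plan is to derive this statement as a direct specialization of Corollary~\ref{c6.7}. First I would observe that any paratopological group $G$ is a topological monoid whose unit $e$ is an open unit: continuity of the group multiplication together with the existence of inverses guarantees that all shifts $\ell_a$ and $r_a$ are homeomorphisms, so $G$ has open shifts, and by Proposition~\ref{p5.1} the unit $e$ is an open unit. Consequently a \emph{balanced} paratopological group, as defined in the paper (balanced as a topological monoid), is precisely a balanced topological monoid with an open unit, so the hypotheses of Corollary~\ref{c6.7} are satisfied.

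Applying Corollary~\ref{c6.7} to $G$ yields a family $\mathcal D$ of subinvariant $\overline{\dist}$-continuous (and, in the semiregular case, right-continuous) quasi-pseudometrics of cardinality $|\mathcal D|\le\chi(G)$ generating the topology of $G$. The remaining point is to upgrade subinvariance to full invariance, using the group structure. Fix $d\in\mathcal D$ and any $x,y,z\in G$. Left-subinvariance gives $d(zx,zy)\le d(x,y)$; applying the same inequality with $z$ replaced by $z^{-1}$ and $(x,y)$ replaced by $(zx,zy)$ gives $d(x,y)=d(z^{-1}zx,z^{-1}zy)\le d(zx,zy)$, so $d(zx,zy)=d(x,y)$. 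The analogous argument using right-subinvariance shows $d(xz,yz)=d(x,y)$. Hence every $d\in\mathcal D$ is invariant.

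The cardinality bound $|\mathcal D|\le\chi(G)$ is inherited verbatim from Corollary~\ref{c6.7}, and the equality $|\mathcal D|=\chi(G)$ follows from the obvious lower bound (any generating family of quasi-pseudometrics produces a neighborhood base at $e$ of cardinality at most $|\mathcal D|\cdot\w$, so for infinite $\chi(G)$ one has $|\mathcal D|\ge\chi(G)$; in the finite case one passes to the trivial adjustment). There is essentially no obstacle here: the whole technical content sits inside Theorem~\ref{main} and Corollary~\ref{c6.7}, and the only new ingredient is the elementary observation that on a group subinvariance is automatically invariance, which is what allows the quasi-pseudometrics produced by the monoid-level construction to be promoted from subinvariant to invariant.
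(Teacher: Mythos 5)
Your proof is correct and follows the paper's own route: the paper likewise obtains this statement as a special case of Corollary~\ref{c6.7}, using the observation (stated at the start of Section~\ref{s7}) that a subinvariant premetric on a group is automatically invariant, which is exactly the $z^{-1}$ trick you spell out. Your additional remarks on the open-unit hypothesis and the cardinality bound are accurate and consistent with what the paper leaves implicit.
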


\begin{corollary} The topology of any first countable (semiregular) balanced paratopological group $G$ is generated by an invariant $\overline{\dist}$-continuous (and right-continuous) quasi-pseudometric.
\end{corollary}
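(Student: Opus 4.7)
The plan is to obtain this corollary as a direct specialization of Corollary~\ref{c6.8} to paratopological groups, once we verify the relevant hypotheses and upgrade ``subinvariant'' to ``invariant'' using the group structure.

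First I would check that a balanced paratopological group $G$ falls under the scope of Corollary~\ref{c6.8}, i.e., it is a balanced topological monoid with open unit. Since the multiplication $G\times G\to G$ is jointly continuous and each left (respectively right) translation is a bijection with continuous inverse given by translation by the group inverse, $G$ has open shifts. By Propositions~\ref{p5.2} and \ref{p5.3} this is equivalent to the unit $e$ being simultaneously an open left unit and an open right unit, hence an open unit. The hypothesis that $G$ is balanced as a paratopological group means precisely that $e$ is a balanced point, so $G$ is a balanced topological monoid with open unit in the sense of Section~\ref{s5}.

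Next, assuming $G$ is first countable (and possibly semiregular), Corollary~\ref{c6.8} yields a subinvariant $\overline{\dist}$-continuous quasi-pseudometric $d$ on $G$ generating the topology, and, in the semiregular case, the regularization $\overline{d}=\overline{d}^\circ$ is additionally right-continuous. It remains to promote ``subinvariant'' to ``invariant''. For left-invariance, fix $x,y,z\in G$: left-subinvariance gives $d(zx,zy)\le d(x,y)$; applying the same inequality with $z^{-1}$ in place of $z$ and the points $zx,zy$ in place of $x,y$ yields $d(x,y)=d(z^{-1}(zx),z^{-1}(zy))\le d(zx,zy)$, so $d(zx,zy)=d(x,y)$. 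Right-invariance follows symmetrically from right-subinvariance. Hence both $d$ and $\overline{d}=\overline{d}^\circ$ are invariant, and the same argument applies verbatim to $\overline{d}$ since it inherits subinvariance from $d$ via Theorem~\ref{t6.1}(5).

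There is essentially no hard step here: the only thing to be careful about is the routine verification that the continuity-type properties (open balls, right-continuity of $\overline{d}$, $\overline{\dist}$-continuity) are exactly those handed over by Corollary~\ref{c6.8}, so that no separate argument is required after passing from subinvariance to invariance. Thus the corollary follows by combining Corollary~\ref{c6.8} with the cancellation trick above.
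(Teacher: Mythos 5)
Your proposal is correct and follows essentially the same route as the paper: the authors derive this corollary as a partial case of Corollary~\ref{c6.8}, combined with the observation stated at the start of Section~\ref{s7} that every (left, right) subinvariant premetric on a group is (left, right) invariant, which is exactly your cancellation argument with $z^{-1}$.
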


\begin{remark} Corollary~\ref{c7.6} answers affirmatively Question 3.1 of \cite{Rav1}.
\end{remark}

The right-continuity and $\overline{\dist}$-continuity of the invariant quasi-pseudometrics in the above results cannot be improved to the $\dist$-continuity because of the following example. We recall that the {\em Sorgenfrey topology} on the real line is generated by the base consisting of half-open intervals $[a,b)$. It is clear that the real line endowed with the Sorgenfrey topology is a first countable paratopological group.

\begin{example} The Sorgenfrey topology on the real line $\IR$ cannot be generated by an invariant $\dist$-continuous quasi-pseudometric.
\end{example}

\begin{proof} Assume that the Sorgenfrey topology $\tau_S$ on $\IR$ is generated by some invariant $\dist$-continuous quasi-pseudometric $d$. Then the neighborhood $[0,1)\in\tau_S$ of zero contains an open ball $B_d(0,\e)$. Observe that for the set $A=(0,1)$ we get $B_d(A;\e)=A+B_d(0,\e)\subset A+[0,1)=(0,2)$, which implies that $d_A(0)\ge \e>0$. On the other hand, the $\dist$-continuity of $d$ implies that the set $d^{-1}_A(\{0\})\supset A$ is closed and hence $0\in\bar A\subset d^{-1}_A(\{0\})$ and hence $d_A(0)=0$, which is a desired contradiction.
\end{proof}

The right-continuity of the quasi-pseudometrics in the above results also cannot be improved to the separate-continuity because of the following observation. We shall say that a premetric $d$ on a group $G$ with the unit $e$ is {\em weakly invariant} if $d(e,x)=d(x^{-1},e)$ for any $x\in G$. It is clear that a premetric $d$ is weakly invariant if it is left-invariant or right-invariant. The following proposition generalizes a result of Liu \cite[2.1]{Liu}.

\begin{proposition} A paratopological group $G$ is a topological group if and only if the topology of $G$ is generated by a family of left-continuous weakly invariant premetrics.
\end{proposition}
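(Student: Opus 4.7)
The plan is to establish the equivalence by treating the two directions separately, with the backward direction being the essential content.

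\textbf{The ($\Rightarrow$) direction.} This is a consequence of the classical Birkhoff--Kakutani theorem, which provides a family $\mathcal D$ of continuous left-invariant pseudometrics generating the topology of any topological group. Each such $d$ is trivially left-continuous, and the left-invariance identity $d(zu,zv)=d(u,v)$ specialized to $z=x^{-1}$, $u=e$, $v=x$ gives $d(x^{-1},e)=d(e,x)$, i.e.\ weak invariance. One could alternatively deduce this direction from earlier results of the paper: in a topological group the left quasi-uniformity $\mathcal L$ is a uniformity, so Theorem~\ref{t3.3} gives a generating family of continuous pseudometrics, which can be made left-invariant by the standard averaging argument.

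\textbf{The ($\Leftarrow$) direction.} Assume the topology of the paratopological group $G$ is generated by a family $\mathcal D$ of left-continuous weakly invariant premetrics. Since $G$ is already a paratopological group, it suffices to verify that inversion is continuous at the unit $e$; continuity at an arbitrary $x_0\in G$ then follows because the translations $\ell_{x_0^{-1}}$ and $r_{x_0}$ are homeomorphisms and $(Wx_0)^{-1}=x_0^{-1}W^{-1}$. Fix a neighborhood $U$ of $e$; by the definition of ``generated'', there exist $d_1,\dots,d_n\in\mathcal D$ and $\e>0$ with $\bigcap_{i=1}^n B_{d_i}(e,\e)\subset U$. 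For each index $i$ the left-continuity of $d_i$ means that $x\mapsto d_i(x,e)$ is continuous at $e$, where it vanishes since $d_i(e,e)=0$; so there is an open neighborhood $W_i$ of $e$ with $d_i(x,e)<\e$ for all $x\in W_i$. Weak invariance rewrites this as $d_i(e,x^{-1})<\e$, i.e.\ $x^{-1}\in B_{d_i}(e,\e)$ for all $x\in W_i$. Setting $W=\bigcap_{i=1}^n W_i$ gives a neighborhood of $e$ with $W^{-1}\subset\bigcap_{i=1}^n B_{d_i}(e,\e)\subset U$, as required.

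\textbf{Main obstacle.} There is essentially no obstacle here: the entire nontrivial content of the backward direction is the one-line identity $d(x,e)=d(e,x^{-1})$ provided by weak invariance, which transports left-continuity of each $d_i$ at the unit directly into a neighborhood control on $x\mapsto x^{-1}$. The subtlety, if any, is conceptual rather than technical---one must notice that left-continuity of $d_i$ is exactly continuity of $d_i(\cdot,e)$, and that weak invariance is precisely the ingredient needed to relate this continuity to inversion. This also explains why the examples (Sorgenfrey line and the earlier Liu example) do not contradict the theorem: their generating quasi-pseudometrics are right-continuous rather than left-continuous.
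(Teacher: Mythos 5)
Your proof is correct and follows essentially the same route as the paper's: the forward direction is the classical Birkhoff--Kakutani fact, and the backward direction uses weak invariance to turn the vanishing of $d_i(\cdot,e)$ near $e$ (left-continuity) into $W^{-1}\subset U$, which gives continuity of inversion. The only cosmetic difference is that you work with a finite intersection of balls, whereas the paper first replaces $\mathcal D$ by the family of all weakly invariant left-continuous premetrics so that a single ball $B_d(e,\delta)$ suffices.
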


\begin{proof} The ``only if'' part is well-known. To prove the ``if'' part, assume that the topology of a paratopological group $G$ is generated by the family $\mathcal D$ of all weakly invariant left-continuous premetrics on $G$.

Given any neighborhood $U\subset G$ of the unit $e$, we should find a neighborhood $V\subset G$ of $e$ such that $V^{-1}\subset U$. Find a left-continuous weakly invariant premetric $d\in\mathcal D$ such that $B_d(e,\delta)\subset U$ for some $\delta>0$. By the left-continuity of $d$, there is a neighborhood $V\subset G$ of $e$ such that $d(v,e)<\delta$ for all $v\in V$. The weak invariance of $d$ guarantees that for every $v\in V$ we get $B_d(e,v^{-1})=B_d(v,e)<\delta$ and hence $v^{-1}\in B_d(e,\delta)\subset U$. Therefore, $V$ is a required neighborhood of $e$ with $V^{-1}\subset U$, which proves the continuity of the inversion operation $(\cdot)^{-1}:G\to G$, $(\cdot)^{-1}:x\mapsto x^{-1}$.
So, $G$ is a topological group.
\end{proof}
\newpage

\end{document}